\documentclass{amsart}
\usepackage[normalem]{ulem}
\usepackage{amssymb,epsfig,mathrsfs,mathpazo,stackengine,scalerel,url,amsthm,thmtools}
\usepackage{xcolor}
\usepackage{stmaryrd}
\usepackage{epsfig,bm}
\usepackage[multiple]{footmisc}
\usepackage{accents} 
\usepackage{mathtools} 
\usepackage{dsfont}
\usepackage{fancyvrb}
\usepackage{arydshln, booktabs, makecell, pbox, tabularx}

\usepackage{graphicx, caption, subcaption}

\newtheorem{theorem}{Theorem}[section]

\newtheorem{proposition}[theorem]{Proposition}

\newtheorem{assumption}[theorem]{Assumption}

\declaretheorem[style=definition,qed=$\vartriangle$,sibling=theorem]{example}
\declaretheorem[style=remark,qed=$\vartriangle$,sibling=theorem]{remark}
\numberwithin{equation}{section}

\newcommand{\eps}{\varepsilon}

\newcommand{\R}{\mathbb R}

\newcommand{\N}{\mathbb N}

\newcommand{\cF}{\mathcal F}

\newcommand{\cE}{\mathcal E}

\newcommand{\cS}{\mathcal S}
\newcommand{\cL}{\mathcal L}
\newcommand{\Lis}{\cL\mathrm{is}}

\newcommand{\identity}{\mathrm{Id}}

\DeclareMathOperator{\ran}{ran}
\DeclareMathOperator{\dom}{dom}
\DeclareMathOperator{\supp}{supp}

\DeclareMathOperator{\diam}{diam}
\DeclareMathOperator*{\argmin}{argmin}
\DeclareMathOperator{\dist}{dist}

\DeclareMathOperator{\divv}{div}

\DeclareMathOperator{\Span}{span}

\newcommand{\nrm}{| \! | \! |}

\newcommand{\corr}[1]{#1}

\newcommand{\be}{\begin{equation}}
\newcommand{\ee}{\end{equation}}

\newcommand{\tria}{{\mathcal T}}
\newcommand{\cP}{{\mathcal P}}

\newcommand{\uumlaut}{{\"u}}
\newcommand{\oumlaut}{{\"o}}

\newcommand{\osc}{{\rm osc}}

\stackMath
\newcommand\tenq[2][1]{%
 \def\useanchorwidth{T}%
  \ifnum#1>1%
    \stackunder[0pt]{\tenq[\numexpr#1-1\relax]{#2}}{\scriptscriptstyle\sim}%
  \else%
    \stackunder[1pt]{#2}{\scriptscriptstyle\sim}%
  \fi%
}


\title{Least squares solvers for ill-posed PDEs that are conditionally stable}
\date{\today}

\author{Wolfgang Dahmen}
\address{Mathematics Department, University of South Carolina, Columbia SC 29208}
\email{wolfgang.anton.dahmen@googlemail.com}
\author{Harald Monsuur}
\author{Rob Stevenson}
\address{Korteweg-de Vries (KdV) Institute for Mathematics, University of Amsterdam, P.O. Box 94248, 1090 GE Amsterdam, The Netherlands.}
\email{h.monsuur@uva.nl, rob.p.stevenson@gmail.com}

\thanks{This research has been supported in part by the NSF Grant DMS  ID 2012469, by the SmartState and Williams-Hedberg Foundation, by the SFB 1481, funded by the 
German Research Foundation, and by the Netherlands Organization for Scientific Research (NWO) under contract.~no.~SH-208-11. 
We acknowledge the support of SURF (www.surf.nl) in using the National Supercomputer Snellius.}

\subjclass[2020]{
35B30 
35B35 
35B45, 
35R25, 
65F08, 
65J20, 
65M12, 
65N12 
}

\keywords{Tikhonov regularization, conditional stability, ill-posed problems, least squares methods, Fortin projectors, dual norms}

\begin{document}

\begin{abstract} 
This paper is concerned with the design and analysis of least squares solvers for ill-posed PDEs that are conditionally stable.
The norms and the regularization term
used in the least squares functional are determined by the ingredients of
the conditional stability assumption. 
We are then able to establish a general
error bound that, in view of the conditional stability assumption, is qualitatively the best possible, without assuming consistent data. The price for these advantages is to handle 
dual norms which reduces to verifying suitable inf-sup stability.
This, in turn, is done by constructing appropriate Fortin projectors
for all sample scenarios. The theoretical findings are illustrated by numerical experiments.
 \end{abstract}
 
 \keywords{Tikhonov regularization, least squares methods, conditional stability,
 dual norms, inf-sup stability, Fortin projectors, mixed formulations, a posteriori bounds for residuals}

\maketitle

\section{Introduction}
In this paper a general approach is developed for the numerical solution of ill-posed  boundary value problems $A u=f$, where $A \in \cL(X,V)$ for Hilbert spaces $X$ and $V$,
that are \emph{conditionally stable}.
The latter means that for all $u \in X$ that satisfy an a priori bound of the form $\|L u\|_H \leq C$, for some $L \in \cL(X,H)$ and a Hilbert space $H$,
there is a continuous dependency of the solution on the data in the sense that, 
for some $\eta=\eta_C\colon \R^+ \rightarrow \R^+$ with $\lim_{t \downarrow 0} \eta(t)=0$, 
it holds that $j(u) \leq \eta \big(\|A u\|_V\big)$ for some $j\colon X \rightarrow \R_+$.
In applications $j(\cdot)$ is a (semi-) norm that is weaker than 
the norm on $X$, and, e.g., $\eta(t)=t^s$ or even only $\eta(t)=(-1/\log t)^s$ for some
$s \in (0,1]$.
Conditional stability has been established for various ill-posed PDEs including data-assimilation and Cauchy boundary data problems for Poisson's, heat and wave equations.

For finite dimensional subspaces $X^\delta \subset X$ (`$\delta$' refers to `discrete'), we approximate $u$ by the minimizer $u_\eps^\delta$ over $X^\delta$
of the regularized least-squares functional $z \mapsto \sqrt{\|Az-f\|_V^2+\eps^2 \|Lz\|_H^2}$.
For a suitable selection of $\eps$, it will be shown that both $\|L(u-u_\eps^\delta)\|_H$ is uniformly bounded, so that 
$j(u-u_\eps^\delta) \leq \eta \big(\|f-A u_\eps^\delta\|_V\big)$, and
$\|f-A u_\eps^\delta\|_V$ is bounded by an absolute multiple of 
$\|f-Au\|_V +\min_{z \in X^\delta}\|u-z\|_X$, being the sum of the consistency error and the error of best approximation.
Consequently, we will achieve qualitatively the best possible bound on the error quantity $j(u-u_\eps^\delta)$ that can be expected for  $u_\eps^\delta \in X^\delta$ in view of the conditional stability estimate.

In applications often $V$ is of product form $\prod_i V_i$, and so $A=(A_i)$ and $f=(f_i)$, 
with one or more $V_i$ being a Sobolev space with negative smoothness index, which is a natural space for a forcing term of a PDE, 
or a fractional Sobolev space on (a part of) the boundary of the computational domain, which is a natural space for a boundary datum. The norms on such spaces cannot be evaluated exactly.

An option to deal with a Sobolev norm of negative smoothness index is to replace it by an $L_2$-norm.
This, however, requires more smoothness of the data and more regularity of $X^\delta$, e.g., a $C^1$- instead of a $C^0$-finite element space,
whereas it is not ensured that the error benefits from smallness of the residual in a stronger norm.
Similar disadvantages are connected to the replacement of fractional Sobolev norms by (weighted) $L_2$-norms.

Our approach to deal with $V_i$ being a Sobolev space with negative smoothness index, i.e., a $V_i$ being of the form $Y_i'$,
is to replace $\|\cdot\|_{Y_i'}$ in the least-squares functional by a discrete dual norm $\|\cdot\|_{{Y^\delta_i}'}$, where $Y_i^\delta=Y_i^\delta(X^\delta) \subset Y_i$
is such that $\|A_i\cdot\|_{Y_i'}$ is equivalent to $\|A_i\cdot\|_{{Y^\delta_i}'}$ on $X^\delta$, and $\dim Y_i^\delta$ is proportional to $\dim X^\delta$. 
The first property is known to be equivalent to existence of a (uniformly bounded) Fortin projector $Y_i \rightarrow Y_i^\delta$.

By introducing the Riesz lift of the corresponding residual $f_i-A_i u^\delta_\eps \in {Y_i^\delta}'$ as an independent variable, the resulting least squares problem has an equivalent formulation as a mixed system, 
which does not involve the dual norm $\|\cdot\|_{{Y^\delta_i}'}$, and which is Ladyshenskaja-Babu\u{s}ka-Brezzi (LBB) stable by virtue of the existence of the Fortin projector.
In many cases, one can construct a $G^\delta_{Y_i}\colon {Y^\delta_i}' \rightarrow Y^\delta_i$ (known as a preconditioner) with $({G^\delta_{Y_i}}^{-1} v)(v)$ equivalent to $\|v\|_{Y_i}^2$, and whose application can be performed in linear complexity, in which case one can efficiently eliminate the additional variable and so retrieves a symmetric positive definite system.

We handle fractional Sobolev norms in the same manner. Viewing a fractional Sobolev space $V_i$, with either a positive or negative smoothness index,
as the dual of $Y_i:=V_i'$, first we construct a (uniformly bounded) Fortin projector $Y_i \rightarrow Y_i^\delta$, and second, to avoid having to compute fractional norms with opposite index of arguments from $Y_i^\delta$, we use a preconditioner $G^\delta_{Y_i}\colon {Y^\delta_i}' \rightarrow Y^\delta_i$ with $({G^\delta_{Y_i}}^{-1} v)(v)$ equivalent to $\|v\|_{Y_i}^2$.

The  steps to handle dual or fractional norms, mentioned above, make our approach practically feasible without compromizing its attractive theoretical properties.
Indeed, still one obtains a bound on the error quantity $j(u-u_\eps^\delta)$ that is qualitatively the best possible.

We exemplify our approach by constructing Fortin interpolators and preconditioners for the examples of the Cauchy problem for Poisson's equation, and data-assimilation problems for wave- and heat-equations. Furthermore, for those examples we illustrate our theoretical findings with numerical results.
\medskip

Our approach to minimize a regularized least squares functional is of course not new.
Not making use of conditional stability, in \cite{19.897,28.5,35.828} this method was analyzed for a regularizing term $\eps^2 \|z\|_X^2$, instead of  our choice  $\eps^2 \|Lz\|_H^2$ suggested by the conditional stability condition.
By replacing the test function from $X$ by minus this test function, a non-symmetric mixed system on $X \times V$ is obtained that is coercive, with a coercivity constant that is, however, proportional to $\eps^2$. With this formulation the notion of stability is fully due to the stabilization term.
By our approach to guarantee LBB-stability, 
the operator $A$ contributes to the stability of the least-squares problem. 
It results in a proof of convergence rates in the error quantity associated to the conditional stability estimate that seems new.

The use of conditional stability estimates for the numerical solution of various ill-posed PDEs has been advocated in series of papers \cite{35.8585,35.859,35.8595, 35.925,35.926,35.928,35.929,35.9296,35.9297}. In those works a control functional is minimized under the constraint that the state satisfies the PDE. Instead of adding Tikhonov stabilization at the continuous level, mesh-dependent stabilization terms tailored to the application at hand are added to the finite element discretization.

\subsection{Layout}\label{ssec:layout}  
In Sect.~\ref{sec:2} we recall the concept of conditional stability for ill--posed problems $Au=f$.
Under the provision that approximations $u_\eps^\delta$ to $u$ from finite dimensional spaces $X^\delta$ are available that satisfy a certain quasi-optimal error bound in an $\eps$-dependent energy norm, it will be demonstrated that, for a judiciously chosen regularization parameter $\eps$,
a qualitatively best possible upper bound holds for the error quantity $j(u-u_\eps^\delta)$.
In Sect.~\ref{Sappls} we present several classical examples to which the theory
applies.
In Sect.~\ref{LS} the aforementioned quasi-optimal error bound will be demonstrated for $u_\eps^\delta \in X^\delta$ being the minimizer of a regularized least squares functional, in which, under inf-sup conditions, dual norms are replaced by discrete dual norms.
The resulting least-squares problem has an equivalent formulation as a mixed system.
Sect.~\ref{Sposdef} is devoted
to a reformulation of the mixed problem as a symmetric positive
definite variational problem, based on uniform preconditioners that
serve as approximate Riesz lifters in those Hilbert space components
that require the use of dual norms. In Sect.~\ref{Sapost} we discuss
a posteriori residual estimators.
In Sect.~\ref{sec:inf-sup} we verify the validity of
the critical inf-sup conditions for all sample problems, the results in preceding sections hinge upon.
Here the central work horse are suitable Fortin operators.
Finally, in Sect.~\ref{sec:numer} we present numerical results for our three sample problems.

\subsection{Notation}\label{ssec:1.2}
In this work, by $C \lesssim D$ we will mean that $C$ can be bounded by a multiple of $D$, \emph{independently} of parameters which $C$ and $D$ may depend on, as the discretisation index $\delta$, the tolerance $\tau$ for the consistency error, and the regularization parameters $\eps$ and $\zeta$.
Obviously, $C \gtrsim D$ is defined as $D \lesssim C$, and $C\eqsim D$ as $C\lesssim D$ and $C \gtrsim D$.

For normed linear spaces $E$ and $F$, by $\cL(E,F)$ we will denote the normed linear space of bounded linear mappings $E \rightarrow F$,
and by $\Lis(E,F)$ its subset of boundedly invertible linear mappings $E \rightarrow F$.
We write $E \hookrightarrow F$ to denote that $E$ is continuously embedded into $F$.
For  convenience only, we exclusively consider linear spaces over the scalar field $\R$.

The set $[0,\infty)$ will be denoted by $\R^+$.

\section{Problem setting and main result} \label{sec:2}
 For Hilbert spaces $X$ and $V$, we consider operators $A \in \cL(X,V)$ which are neither assumed to be injective nor to have a dense range in $V$.
We study the problem of the (approximate) reconstruction of $u \in X$ from its image $Au$ assuming we are only given a perturbation $f$ of $Au$ for which 
\be 
\label{problem}
\|f-Au\|_V \leq \tau
\ee
holds for some tolerance $\tau\geq 0$ which we assume to be known.  
 Since in particular we do not assume bounded invertibility of $A$ our problem is ill-posed, even for $\tau=0$. 
 Although for convenience  we refer in the following to $u$ as \emph{the} solution 
of our recovery problem, one should bear  in mind that   for $\tau>0$ there may  be multiple $u \in X$ that satisfy \eqref{problem}. Our results will be valid uniformly in those $u$.

Since Tikhonov (\cite{250.2}) with \emph{stability} for ill-posed problems, usually called \emph{conditional stability}, one understands \emph{some} continuous dependency of the solution upon the data,  typically with respect to a weaker metric than that induced by $\|\cdot\|_X$, under the assumption that a  bound on the solution itself is available.
More specifically, similar to \cite{35.8585}, we assume existence of an $L \in  \cL(X,H)$, where $H$ is some additional Hilbert space, such that
 the following assumption is valid:
\begin{assumption}[Conditional stability] \label{assump} The pair
$$
(A,L)\in \cL(X,V \times H) \,\,\text{ is injective,}
$$
and there exists a $j\colon X \rightarrow \R^+$, and for any $\mathcal{C}>0$, a non-decreasing $\eta=\eta_{\mathcal{C}}\colon \R^+ \rightarrow \R^+$ with $\lim_{t \downarrow 0} \eta(t)=0$, such that for $z\in X$ with $\|Lz\|_H \leq {\mathcal{C}}$, it holds that
\be \label{jz}
j(z) \leq \eta_{\mathcal{C}}\big(\|A z\|_V\big).
\ee
\end{assumption}

\noindent Typically, $j$ is a norm or a semi-norm on a Hilbert space $\widetilde{H} \hookleftarrow X$. In the first case, $A$ is injective.
Several examples will be given in Sect.~\ref{Sappls}.

For $\eps > 0$, we set
\be \label{trip}
\nrm\cdot\nrm_\eps:=\sqrt{\|A \cdot\|_V^2+\eps^2\|L\cdot\|_H^2}.
\ee
To use precisely the ingredients of the conditional stability 
condition in the definition of $\nrm\cdot\nrm_\eps$ will be seen to be essential in
what follows.
Moreover, notice that $\nrm\cdot\nrm_\eps$ is a norm on $X$
 by our assumption of $(A,L)$ being injective. 
Thinking of $\eps$ being small,  tacitly we will always assume that $\eps \|L\|_{\cL(X,H)} \lesssim 1$ so that, since $A$ is bounded,  $\nrm\cdot\nrm_\eps \lesssim \|\cdot\|_X$.

Given a \emph{finite dimensional} subspace $X^\delta$ of $X$, in Sect.~\ref{LS}-\ref{Sposdef} we show how to compute for each $\eps>0$ a $u_\eps^\delta \in X^\delta$ satisfying  
\be \label{1}
\nrm u -u_\eps^\delta \nrm_\eps \lesssim \tau+\min_{z \in X^\delta}\nrm u-z\nrm_\eps+\eps \|Lu\|_H.
\ee

If \eqref{jz} is valid for $L=0$, and thus $\eta_{\mathcal{C}} \equiv \eta$ is independent of ${\mathcal{C}}$, then one speaks about \emph{unconditional stability} of \eqref{problem}.\footnote{Not to be confused with \emph{well-posedness}, with which we mean $A\in \Lis(X,V)$.
An unconditionally stable problem where $\ran A$ is closed is also benign in the sense that then, by an application of the open mapping theorem, \eqref{problem} is \emph{well-posed in least-squares sense}, i.e., $A^* A \in \Lis(X,X)$, and thus is also not of our primary interest.}
In this case $\nrm\cdot\nrm_\eps=\|A\cdot\|_V$ is $\eps$-independent, and so will be $u_\eps^\delta$.

\begin{theorem} \label{thm:0} Assume \eqref{1} and recall Assumption~\ref{assump}.
For $L\neq 0$, let $\eps=\eps(\tau,\delta)>0$ be such that for some ${\mathcal{C}} \geq \|Lu\|_H$,
 \be \label{15}
 \tau+\min_{z \in X^\delta}\nrm u-z\nrm_\eps \lesssim \eps {\mathcal{C}} \quad \text{ and } \quad \eps \|L u\|_{H} \lesssim  \tau+\min_{z \in X^\delta}\| u-z\|_X.
  \ee
 Then
 \be \label{25}
j(u-u_\eps^\delta) \leq \eta_{\mathcal{C}}\big(\|A (u-u_\eps^\delta)\|_V\big),
\ee
and
\be \label{9}
\|A (u-u_\eps^\delta)\|_V
\lesssim  \tau+\min_{z \in X^\delta}\| u-z\|_X.
\ee
\end{theorem}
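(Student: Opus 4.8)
The plan is to derive \eqref{25} and \eqref{9} from the single quasi-optimality estimate \eqref{1}, using the two balancing conditions in \eqref{15} to control the stabilization term. First I would start from \eqref{1}, which gives
\[
\nrm u-u_\eps^\delta\nrm_\eps \lesssim \tau + \min_{z\in X^\delta}\nrm u-z\nrm_\eps + \eps\|Lu\|_H .
\]
The first step is to bound the right-hand side. By the first inequality in \eqref{15}, the sum $\tau + \min_{z\in X^\delta}\nrm u-z\nrm_\eps$ is bounded by a multiple of $\eps\mathcal C$; and since $\mathcal C\geq\|Lu\|_H$, the last term $\eps\|Lu\|_H$ is also $\lesssim \eps\mathcal C$. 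Hence $\nrm u-u_\eps^\delta\nrm_\eps \lesssim \eps\mathcal C$. Recalling the definition \eqref{trip} of $\nrm\cdot\nrm_\eps$, this yields simultaneously
\[
\|A(u-u_\eps^\delta)\|_V \lesssim \eps\mathcal C
\qquad\text{and}\qquad
\eps\|L(u-u_\eps^\delta)\|_H \lesssim \eps\mathcal C,
\]
so that $\|L(u-u_\eps^\delta)\|_H \lesssim \mathcal C$, i.e. $\|L(u-u_\eps^\delta)\|_H \leq \mathcal C'$ for some constant $\mathcal C'$ that is a fixed multiple of $\mathcal C$.

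The second step is to invoke the conditional stability Assumption~\ref{assump} with the a priori bound $\mathcal C'$ in place of $\mathcal C$: applying \eqref{jz} to $z = u-u_\eps^\delta$ (legitimate since $\|L(u-u_\eps^\delta)\|_H\leq\mathcal C'$) gives
\[
j(u-u_\eps^\delta) \leq \eta_{\mathcal C'}\big(\|A(u-u_\eps^\delta)\|_V\big),
\]
which is \eqref{25}, modulo the cosmetic point that the subscript on $\eta$ is $\mathcal C'$ rather than $\mathcal C$; since $\eta_{\mathcal C}$ is only required to be non-decreasing in $t$ and to exist for every positive constant, writing it with subscript $\mathcal C$ (after possibly enlarging $\mathcal C$ at the outset) is harmless. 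Here the mild subtlety is making sure the constants in \eqref{15} are chosen so that the enlarged bound $\mathcal C'$ is still the $\mathcal C$ appearing in the statement; this is bookkeeping rather than a genuine obstacle, and the paper's $\lesssim$-convention absorbs it.

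For \eqref{9}, the third step is to use the \emph{second} inequality in \eqref{15}. From the bound $\|A(u-u_\eps^\delta)\|_V \lesssim \eps\mathcal C$ obtained above we cannot yet conclude; instead I would go back to $\nrm u-u_\eps^\delta\nrm_\eps \lesssim \tau + \min_{z\in X^\delta}\nrm u-z\nrm_\eps + \eps\|Lu\|_H$ and estimate each summand by $\tau + \min_{z\in X^\delta}\|u-z\|_X$: the term $\eps\|Lu\|_H$ is directly bounded this way by the second part of \eqref{15}; and since $\eps\|L\|_{\cL(X,H)}\lesssim 1$ and $A$ is bounded we have $\nrm\cdot\nrm_\eps \lesssim \|\cdot\|_X$ (as already noted in the text), so $\min_{z\in X^\delta}\nrm u-z\nrm_\eps \leq \min_{z\in X^\delta}\|u-z\|_X$ up to a constant. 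Combining, $\|A(u-u_\eps^\delta)\|_V \leq \nrm u-u_\eps^\delta\nrm_\eps \lesssim \tau + \min_{z\in X^\delta}\|u-z\|_X$, which is \eqref{9}.

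The argument is essentially a two-line chain of inequalities; the only place demanding care is the \emph{choice of $\eps$} encoded in \eqref{15} and the tracking of which constants feed into the a priori bound $\mathcal C$ used in the conditional stability estimate — i.e. ensuring \eqref{15} is simultaneously satisfiable and that the resulting bound on $\|L(u-u_\eps^\delta)\|_H$ stays within the admissible range for $\eta_{\mathcal C}$. The existence of such an $\eps$ is hypothesized in the theorem, so within this statement there is no real obstacle; the substantive work (constructing $u_\eps^\delta$ so that \eqref{1} holds, and exhibiting an admissible $\eps$ in concrete examples) is deferred to the later sections.
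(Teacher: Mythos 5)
Your proposal is correct and follows essentially the same route as the paper: the first inequality in \eqref{15} together with \eqref{1} and $\mathcal{C}\geq\|Lu\|_H$ gives $\|L(u-u_\eps^\delta)\|_H\lesssim\mathcal{C}$ and hence \eqref{25} via Assumption~\ref{assump}, while \eqref{9} follows from \eqref{1}, $\nrm\cdot\nrm_\eps\lesssim\|\cdot\|_X$, and the second inequality in \eqref{15}; the paper also treats the constant in $\|L(u-u_\eps^\delta)\|_H\lesssim\mathcal{C}$ with the same implicit bookkeeping you describe. The only (trivial) difference is that the paper additionally disposes of the case $L=0$ at the outset, which your argument omits but which requires no use of \eqref{15}.
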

  
 \begin{proof} When $L=0$, \eqref{25} holds unconditionally, and $\nrm \cdot\nrm_\eps = \|A \cdot\|_V\lesssim \|\cdot\|_X$ so that \eqref{9} follows directly from \eqref{1}.
So let $L\neq 0$.
From \eqref{1} and the lower bound from \eqref{15} on $\eps$, one 
derives $\|L(u-u_\eps^\delta)\|_H \leq \eps^{-1} \nrm u -u_\eps^\delta \nrm_\eps \lesssim {\mathcal{C}}$. Conditional stability then ensures
$$
j(u-u_\eps^\delta) \leq \eta_{\mathcal{C}}\big(\|A (u-u_\eps^\delta)\|_V\big).\footnotemark
$$
\footnotetext{Although in applications, the decay of $\eta_{\mathcal{C}}(t)$ for $t \downarrow 0$ is faster when simultaneously ${\mathcal{C}}={\mathcal{C}}(t) \downarrow 0$, by the very character of a conditional stability estimate $\|Lz\|_H \downarrow 0$ cannot be expected to follow from $\|A z\|_V \downarrow 0$.}%
Again \eqref{1}, and the upper bound on $\eps$ from  \eqref{15} show that
$$
\|A (u-u_\eps^\delta)\|_V \leq \nrm u -u_\eps^\delta \nrm_\eps 
\lesssim \tau+\min_{z \in X^\delta}\| u-z\|_X,
$$
which completes the proof.\qedhere
\end{proof}

Notice  that when $\min_{z \in X^\delta}\| u-z\|_X$ in \eqref{15} is replaced by an upper bound, then one arrives at \eqref{9} with $\min_{z \in X^\delta}\| u-z\|_X$ replaced by that upper bound.

In the examples given in Sect.~\ref{Sappls}, $\eta_{\mathcal{C}}(t)$ will be of the form ${\mathcal O}((t+\mathcal{C})^{1-\sigma} t^\sigma)$
or ${\mathcal O}((t+\mathcal{C}) (\log(1+\frac{{\mathcal{C}}}{t}))^{-\sigma})$
 for some $\sigma \in (0,1)$, or ${\mathcal O}(t)$.

\begin{remark}[{Optimality of estimates}] 
\label{remmie} The bound \eqref{9} on $\|A (u-u_\eps^\delta)\|_V$, (valid because of the upper bound on $\eps$),  by a multiple of the sum of the {(maximal) \emph{consistency error} $\tau$} and the \emph{approximation error} $\min_{z \in X^\delta}\|u-z\|_X$, is qualitatively the best that can be expected for a numerical approximation from $X^\delta$.
Since, thanks to the lower bound on $\eps$, at the same time $\|L(u-u_\eps^\delta)\|_H \lesssim {\mathcal{C}}$,
we obtain the generally qualitatively best possible upper bound for $j(u-u_\eps^\delta)$ that is permitted by the conditional stability estimate.

That being said, inserting the upper bound in \eqref{9} into \eqref{25} can nevertheless provide a pessimistic bound. The reason is that perturbations in the data enter the approximation by solving a discretized problem, whose conditioning is for coarser and coarser meshes usually increasingly better than that of the infinite dimensional problem whose behaviour is captured by the conditional stability estimate.
\end{remark}

\begin{remark}[Selection of $\eps$ when $L \neq 0$] \label{rem:graph} 
Let $(X^\delta)$ be a family of finite dimensional subspaces of $X$ such that for some $s>0$ for general, sufficiently smooth $u \in X$ it holds $\min_{z \in X^\delta}\| u-z\|_X \eqsim (\dim X^\delta)^{-s}$. Then, in view of \eqref{15}, an obvious choice is to take $\eps \eqsim \tau + (\dim X^\delta)^{-s}$. Because of a lacking smoothness of $u$, it might be, however,  that this $\eps$ decays too fast for $\dim X^\delta \rightarrow \infty$ which then would manifest itself by an increase of $\|L u_\eps^\delta\|_H$.
Indeed, recall that the sole reason for imposing the lower bound on $\eps$ in \eqref{15} is to prevent an unbounded growth of $\|L u_\eps^\delta\|_H$, and thus of $\|L(u-u_\eps^\delta)\|_H$, which would jeopardize a meaningful application of the conditional stability estimate.
The value of $\|L u_\eps^\delta\|_H$, however, can be monitored and so the choice of $\eps$ can be adapted when such a growth of $\|L u_\eps^\delta\|_H$ is observed.

In various numerical experiments we observed that regularization is actually not needed at all, whereas for other data $\eps$ equal to $\tau$ was close to the experimentally found best regularization parameter.  In our tests, where $u$ was smooth, we did not encounter an example where it was helpful to take $\eps$ equal to $\tau$ plus 
a `mesh-dependent' term that approximates $\min_{z \in X^\delta}\| u-z\|_X$. A probable explanation is the better conditioning of the discretized problems on coarser meshes.
\end{remark}

\begin{remark}[The case that $(A,L)$ is only closed]
This section started by assuming a pair $(A,L) \in \cL(X,V \times H)$. If for some Hilbert space $\tilde X$, $(A,L)\colon \tilde X \supset \dom(A,L)\rightarrow V \times H$ is only linear and closed, then by defining $X:=\{z\in \tilde{X}\colon (Az,Lz) \in V \times H\}$ equipped with the graph norm, we are back in the situation required for Assumption~\ref{assump}.
\end{remark}

\begin{remark}[About closedness of $\ran(A,L)$] \label{rem:closedrange} By injectivity of $(A,L)$ from Assumption~\ref{assump}, the open mapping theorem shows that for any $\eps>0$, closedness of $\ran(A,\eps L)\subset V \times H$ is equivalent to $(A,\eps L) \in \Lis(X,\ran(A,\eps L))$, being equivalent to $\nrm\cdot\nrm_\eps \eqsim \|\cdot\|_X$ (obviously generally \emph{dependent} on $\eps$). The latter shows 
that closedness of $\ran(A,\eps L)$ implies that $(X,\nrm\cdot\nrm_\eps)$ is a Hilbert space, as well as that closedness of $\ran(A,\eps L)$ is equivalent to closedness of $\ran(A,L)$.

Conversely, if $\ran(A,L)$ is \emph{not} closed, so that $\nrm\cdot\nrm_\eps$ is \emph{not} equivalent $\|\cdot\|_X$, then from $\nrm\cdot\nrm_\eps \lesssim \|\cdot\|_X$ and the open mapping theorem it follows that $(X,\nrm \cdot\nrm_\eps)$ is \emph{not} a Hilbert space.

An advantage offered by a pair $(A,L)$ with closed $\ran(A,L)$ is that one can bound the condition number of the linear system that determines this approximation (see Remark~\ref{conditioning}). In that context note that
if Assumption~\ref{assump} holds for some $(A,L) \in  \cL(X,V \times H)$, then it holds
also for $(A,\identity) \in  \cL(X,V \times X)$ with $j^{\rm new}:=j$ and $\eta_{\mathcal{C}}^{\rm new}:=\eta_{{\mathcal{C}}\|L\|_{\cL(V,H)}}$, where now $\ran (A,\identity)$ \emph{is} closed. Despite this advantage we will not insist on closedness of $\ran(A,L)$ in what follows. One argument for that is the following. If unconditional stability holds, then for computing $u_\eps^\delta$ using $(A,0)$ 
no regularization will be needed. 
\end{remark}

\section{Applications} \label{Sappls}
\begin{example}[Cauchy problem for Poisson's equation] \label{ex1} 
Let $\Omega \subset \R^d$ be a Lipschitz domain, and let $\Sigma$ and $\Sigma^c$ be open, measurable subsets of $\partial\Omega$ with $\Sigma\cap \Sigma^c=\emptyset$, $\overline{\Sigma} \cup \overline{\Sigma^c}=\partial\Omega$, and $|\Sigma|>0$.
\corr{Informally,}  the Cauchy problem asks for finding a solution to
\begin{equation}
\label{informal}
-\triangle u = f_I \text{ on } \Omega,\quad u = f_D  \text{ on } \Sigma ,\quad \tfrac{\partial u}{\partial n} = f_N \text{ on } \Sigma,
\end{equation}
i.e., Dirichlet and Neumann conditions are imposed on the same boundary portion of non-vanishing measure. \corr{To avoid unnecessarily restrictive assumptions on the data under which the formulation \eqref{informal} would be meaningful, and}  to identify an appropriate least squares functional
we  employ the following \corr{rigorous} weak
 formulation.
Given $f=(f_I,f_D,f_N) \in H^1_{0,\Sigma^c}(\Omega)' \times H^{\frac12}(\Sigma) \times H^{-\frac12}(\Sigma)$, 
 with $g_{f_I,f_N}:=v \mapsto f_I(v)+\int_{\Sigma} f_N v\,ds \in H^1_{0,\Sigma^c}(\Omega)'$, \corr{we search a solution of}
\be 
\label{Cauchy}
A u = (B_1 u,B_2u)=(g_{f_I,f_N},f_D),
\ee
where
$(B_1, B_2) \in \cL\big( \underbrace{H^1(\Omega)}_{X:=},  \underbrace{H^1_{0,\Sigma^c}(\Omega)' \times H^{\frac12}(\Sigma)}_{V:=}\big)$
is defined by $B_2:=\gamma_\Sigma$, being the trace operator on $\Sigma$, and
$B_1$ represents the negative Laplacian as a mapping from $X$ to $H^1_{0,\Sigma^c}(\Omega))'$
$$
(B_1z)(v):=\int_{\Omega} \nabla z \cdot \nabla v \,dx \qquad (z \in X,\,v \in H^1_{0,\Sigma^c}(\Omega)).
$$
Here $H^1_{0,\Sigma^c}(\Omega)$ is the closure in $H^1(\Omega)$ of the smooth functions on $\Omega \cup \Sigma$ with compact support, and
$H^{-\frac12}(\Sigma)$ is the dual of $\widetilde{H}^{\frac12}(\Sigma)$, the latter in the literature also denoted by $H_{00}^{\frac12}(\Sigma)$.
The dual of $H^{\frac12}(\Sigma)$ is denoted by $\widetilde{H}^{-\frac12}(\Sigma)$.

For this problem it is known that $\ker A=0$ and, when $|\Sigma^c|>0$, $\ran A \subsetneq \overline{\ran A}=V$ (e.g.~\cite[Prop.~3.2]{28.5}).
In \cite[Thm.~1.7, Rem.~1.8, and Thm.~1.9]{10.1}, the following conditional \emph{interior} and conditional \emph{global} stability results have been established:
{\renewcommand{\theenumi}{\roman{enumi}}
\begin{enumerate}
\item \label{i} 
For $\omega \subset \Omega$ with $\dist(\omega,\Sigma^c)>0$, there exists a $\sigma \in (0,1)$ such that
$$
\|z\|_{L_2(\omega)} \lesssim (\|A z\|_V+\|z\|_{L_2(\Omega)})^{1-\sigma} \|A z\|_V^{\sigma} \qquad(z \in X).
$$
\item \label{ii} There exists a $\sigma \in (0,1)$ such that for $z\neq 0$
$$
\|z\|_{L_2(\Omega)} \lesssim  (\|A z\|_V+\|z\|_{H^1(\Omega)}) \Big(\log\big(1+\frac{\|z\|_{H^1(\Omega)}}{\|A z\|_V}\big)\Big)^{-\sigma} \qquad(z \in X).
$$
\end{enumerate}}
\noindent Notice that \eqref{i} and \eqref{ii} are of the form as in Assumption~\ref{assump} where $\|Lz\|_H$ and $\eta_{\mathcal{C}}(t)$
read as $\|z\|_{L_2(\Omega)}$ and $\mathcal{O}\big((t+{\mathcal{C}})^{1-\sigma}t^\sigma\big)$, or $\|z\|_{H^1(\Omega)}$ and $\mathcal{O}\big((t+\mathcal{C})\big(\log (1+\mathcal{C}/t)\big)^{-\sigma} \big)$, respectively.
\end{example}

\begin{example}[Data-assimilation for the heat equation] \label{ex2}
Let $\Omega \subset \R^d$ be a domain, $0<T_1<T_2< T$, and $\emptyset \neq \omega \subset \Omega$ open. With $I:=(0,T)$,
given $(f,g) \in L_2(I;H^{-1}(\Omega)) \times L_2(I \times \omega)$, the data-assimilation problem reads as finding  $u$ with
$\partial_t u -\triangle_x u=f$ on $I \times \Omega$, and $u|_{I \times \omega}=g$, or, more precisely,
$Au=(f,g)$, where $A=(B,\Gamma_{I \times \omega}) \in \cL\big(\underbrace{L_2(I;H^1(\Omega)) \cap H^1(I;H^{-1}(\Omega))}_{X:=},\underbrace{L_2(I;H^{-1}(\Omega)) \times L_2(I \times \omega)}_{V:=}\big)$ is defined by $\Gamma_{I \times \omega} z=z|_{I \times \omega}$, and 
$(Bz)(v):=\int_I \int_\Omega \partial_t z \,v +\nabla_x z \cdot \nabla_x v\,dx\,dt$.

The following conditional stability estimates can be found in \cite{35.925}:
{\renewcommand{\theenumi}{\alph{enumi}}
\begin{enumerate}
\item \label{hi} For a bounded $\breve{\omega} \Subset \Omega$, there exists a $\sigma \in (0,1)$ such that  
$$
\|z\|_{L_2((T_1,T_2);H^1(\breve{\omega}))} \lesssim 
\big(\|A z\|_{V}+\|z\|_{L_2(I \times \Omega)}\big)^{1-\sigma} \|A z\|_{V}^\sigma \qquad (z \in X).
$$
\item \label{hii} When one has the additional information that $u=0$ on $I \times \partial \Omega$, then $X$ should be redefined as $X:=L_2(I;H_0^1(\Omega)) \cap H^1(I;H^{-1}(\Omega))$. For $\Omega$ being a bounded convex polytope, now 
it holds that
$$
\|z\|_{L_2((T_1,T);H^1(\Omega)) \cap H^1((T_1,T);H^{-1}(\Omega)) } \lesssim \|A z\|_{V} \qquad (z \in X).
$$
\end{enumerate}}
\noindent  In Case \eqref{hi}, Assumption~\ref{assump} is valid with
$\|Lz\|_H$ and $\eta_{\mathcal{C}}(t)$ reading as $\|z\|_{L_2(I \times \Omega)}$ and $\mathcal{O}\big((t+\mathcal{C})^{1-\sigma}t^\sigma\big)$, whilst Case \eqref{hii} concerns \emph{unconditional} (Lipschitz) stability, i.e., $L=0$ and $\eta(t)=\mathcal{O}(t)$.
To show the latter, it remains to verify that $A$ is injective in Case \eqref{hii}.
Suppose it is not, and let $0 \neq z\in X$ with $\|Az\|_V=0$. From $X \hookrightarrow C([0,T];L_2(\Omega))$ (\cite[Ch.~1, Thm.~ 3.1]{185}), there exists an open interval $J \subset (0,T)$ such that for any $t \in J$, $\|z(t,\cdot)\|_{L_2(\Omega)} \neq 0$. 
On the other hand, the above estimate shows that  $\|z\|_{L_2((T_1,T);H^1_0(\Omega)) \cap H^1((T_1,T);H^{-1}(\Omega)) }=0$ for any $T_1 \in (0,T)$.
From $L_2((T_1,T);H^1_0(\Omega)) \cap H^1((T_1,T);H^{-1}(\Omega)) \hookrightarrow C([T_1,T];L_2(\Omega))$ we arrive at a contradiction.
\end{example}

\begin{example}[Data-assimilation for the wave equation, {\cite[Remark~A.5]{35.9296}}] 
\label{ex3}
\mbox{}Let $\Omega \subset \R^d$ be a domain with a smooth boundary, and, for some $T>0$, let $I:=(0,T)$.
Let $\omega \subset \overline{\Omega}$, and assume that $I \times \omega$ satisfies the \emph{Geometric Control Condition} \cite{19.3,19.4}.
Roughly speaking, it means that all geometric optic rays in $I \times \Omega$, taking into account their reflections at the lateral boundary, intersect the set $I \times \omega$.

Given $(f,g,h) \in V:= H^{-1}(I \times \Omega) \times L_2(I\times \partial\Omega) \times L_2(I\times \omega)$, the data assimilation problem reads as finding $u$
that satisfies
\begin{align*}
&(\Box u)(v):=\int_I \int_\Omega -\partial_t u \,\partial_t v+\nabla_x u \cdot \nabla_x v \,dx\,dt=f(v)\quad (v \in H_0^1(I\times \Omega)),\\
& \gamma_{I\times \partial\Omega}u:=u|_{I\times \partial\Omega}=g, \text{ and } \Gamma_{I\times \omega} u:= u|_{I\times \omega}=h.
\end{align*}
With $A:=(\Box, \gamma_{I\times \partial\Omega}, \Gamma_{I\times \omega})$
and $X=\{z \in L_2(I \times \Omega)\colon Az \in V\}$ equipped with the graph norm, or its completion when $A\colon L_2(I \times \Omega)\supset D_A \rightarrow V$ is not closed, 
the following \emph{unconditional (Lipschitz) stability} is valid:
\be \label{22}
\|z\|_{L_\infty(I;L_2(\Omega))}+\|\partial_t z\|_{L_2(I;H^{-1}(\Omega))} \lesssim \|A z\|_V  \qquad (z \in X). \qedhere
\ee
\end{example}

\begin{remark}[The condition of $\partial\Omega$ being smooth] \label{rem:polytope} For finite element computations, for $d>1$ a problem with the setting of Example~\ref{ex3} is the condition of $\partial\Omega$ being smooth.
It is used in the derivation of both the \emph{Distributed Observability Estimate} $\|z(0,\cdot)\|_{L_2(\Omega)}+\|\partial_t z(0,\cdot)\|_{L_2(\Omega)} \lesssim \|\Gamma_{I\times \omega} z\|_{L_2(I\times \omega)}$ for functions $z$ that satisfy $\Box z=0$ and $\gamma_{I\times \partial\Omega} z=0$, see e.g.~\cite{169.0555}, and the 
\emph{Energy Estimate}
\begin{align*}
\|z\|&_{L_\infty(I;L_2(\Omega))}+\|\partial_t z\|_{L_2(I;H^{-1}(\Omega))}\lesssim\\
&\|z(0,\cdot)\|_{L_2(\Omega)}+\|\partial_t z(0,\cdot)\|_{H^{-1}(\Omega)}+\|\gamma_{I\times \partial\Omega} z\|_{L_2(I \times \partial\Omega)}+\|\Box z\|_{H^{-1}(I \times \Omega)},
\end{align*}
see \cite[Prop.~A.1]{35.9296}, which builds on \cite[Thm.~2.1 and 2.3]{169.066}.
From both these estimates, one easily derives \eqref{22} (cf.~\cite[proof of Thm.~2.2]{35.859} (see however \cite[Rem.~2.6]{35.9296})).
In \cite{35.859} it is claimed that under stronger conditions on $\omega$, the Distributed Observability Estimate can also be valid for polytopal $\Omega$.
See also \cite{35.8598} for the related boundary controllability of the wave equation on domains with corners.
Assuming that on such domains also an energy estimate is valid, possibly with weaker norms
 on the left-hand side, in the case that these norms do control $\|z\|_{L_2(I \times \Omega))}$, \cite[proof of Thm.~2.2]{35.859} will give an unconditional stability estimate \eqref{22} with those norms on the left-hand side.
\end{remark}

Notice that only in Example~\ref{ex1}\eqref{ii}, $\ran(A,L)$ is closed.

Other examples of conditional stability include, e.g., \emph{data-assimilation for the Poisson equation} (\cite{35.928}), \emph{the backward heat equation} (\cite{145}), and \emph{the heat and wave equations with lateral Cauchy data} (\cite{168.865} and \cite[Thm.~3.4.1]{168.825}).

\section{Least squares approximation} \label{LS}
We adhere to the assumptions in Section \ref{sec:2}.
For a given suitable finite dimensional space $X^\delta$, we
propose, as an approximate ``solution'' to \eqref{problem},  the unique minimizer $u_\eps^\delta\in X^\delta$
of the  \emph{regularized} least-squares functional 
$$z \mapsto\sqrt{\|A z-f\|_V^2+\eps^2\|Lz\|_H^2}.$$ As we have seen in applications, however, the space $V$, or one or more components of $V$ when it is a product space, are equipped with a norm that cannot be evaluated. For example it can be either a dual norm, i.e., a norm of type $\sup_{0 \neq v \in Y} \frac{|\cdot(v)|}{\|v\|_Y}$, or a fractional Sobolev norm, the latter which typically arises with the enforcement of boundary conditions.

For the dual norm case we will see that under an inf-sup or Ladyshenskaja-Babu\u{s}ka-Brezzi (LBB) condition, 
for minimizing the least-squares functional over $X^\delta$ the supremum over $Y$ can be replaced by a supremum over a  suitable finite dimensional space, which makes it computable assuming $\|\cdot\|_Y$ can be evaluated.

Dealing with reflexive spaces, any norm can be viewed as a dual norm. Applying this to a fractional Sobolev norm, the space $Y$ is a fractional Sobolev space with smoothness index of opposite sign. At a first glance this might not seem to be helpful, but we saw that under an LBB condition  the norm on the latter space needs to be evaluated for arguments from a finite dimensional subspace only. Moreover, as we will see, it suffices to compute a norm on this finite dimensional subspace that is only (uniformly) \emph{equivalent} to the norm on $Y$.

In view of above comments, we consider our least squares problem in the following setting that covers all envisioned scenarios. 
Specifically, $V$ may be the product of a dual space and a space whose norm is easy to evaluate. So, for some Hilbert spaces $Y$ and $W$, let $V=Y'\times W$, $A=(B,C)$, so that \eqref{trip} takes the form
\be \label{17}
\nrm\cdot\nrm_\eps:=\sqrt{\|B \cdot\|_{Y'}^2+\|C \cdot\|_{W}^2+\eps^2\|L\cdot\|_H^2}\,,
\ee
and let $f=(g,h)$, so that the aforementioned least-squares functional reads as $z \mapsto\sqrt{\|B z-g\|_{Y'}^2+\|C z-h\|_W^2+\eps^2\|Lz\|_H^2}$.
We assume that the $\|\cdot\|_W$-norm can be evaluated, ignoring possible quadrature issues. The analysis of cases where 
the triple $(V,A,f)$ has none or multiple components of type either  $(Y',B,g)$ or $(W,C,h)$ causes no additional problems.

To avoid the exact evaluation of the $\|\cdot\|_{Y'}$-norm, given a family $(X^\delta)_{\delta \in \Delta}$ of finite dimensional subspaces of $X$, let $(Y^\delta)_{\delta \in \Delta}$ be a family of finite dimensional subspaces of $Y$ such that
\be \label{inf-sup}
\varrho:=\inf_{\delta \in \Delta} \inf_{ {\{z \in X^\delta\colon B z \neq 0\}}}  \sup_{0 \neq v \in Y^\delta} \frac{|(B z)(v)|}{\|B z\|_{Y'} \|v\|_Y}>0,
\ee
and let $\langle \cdot,\cdot \rangle_{Y^\delta}$ be an inner product on $Y^\delta$ with associated norm $\|\cdot\|_{Y^\delta}$ that satisfies
\be \label{equiv}
\|\cdot\|_{Y^\delta} \eqsim \|\cdot\|_Y\quad \text{on } Y^\delta
\ee
(uniformly in $\delta \in \Delta$).
A choice $\|\cdot\|_{Y^\delta} \neq \|\cdot\|_Y$ is useful when also the $\|\cdot\|_Y$-norm cannot be (easily) evaluated. As we will see in Sect.~\ref{Sposdef}, 
another reason for taking a suitable $\|\cdot\|_{Y^\delta} \neq \|\cdot\|_Y$ is when the stiffness matrix corresponding to $\langle\cdot,\cdot\rangle_{Y^\delta}$ can be efficiently inverted  in which case the approximation $u_\eps^\delta$ defined in the next theorem can be found as the solution of a symmetric positive definite system instead of a saddle-point system.

\begin{theorem} \label{thm:1} Let \eqref{inf-sup}-\eqref{equiv} be valid. For $u_\eps^\delta$ being the unique solution of
\be \label{13}
u_\eps^\delta:=\argmin_{z \in X^\delta} \Big\{\sup_{0 \neq v \in Y^\delta} \frac{|(Bz-g)(v)|^2}{\|v\|_{Y^\delta}^2}
+\|Cz-h\|_{W}^2+\eps^2\|L z\|_H^2\Big\},
\ee
it holds that
\be \label{4}
\nrm u -u_\eps^\delta \nrm_\eps \lesssim \|Bu-g\|_{Y'}+\|Cu-h\|_{W}+\min_{z \in X^\delta}\nrm u-z\nrm_\eps+\eps \|Lu\|_H,
\ee
i.e., \eqref{1} is valid.
\end{theorem}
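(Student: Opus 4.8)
The plan is a C\'ea-type quasi-optimality argument, in which the inf-sup condition \eqref{inf-sup} together with \eqref{equiv} is used to compare the computable ``discrete'' norm with the exact one. For $w\in Y'$ write $\|w\|_{{Y^\delta}'}:=\sup_{0\neq v\in Y^\delta}|w(v)|/\|v\|_{Y^\delta}$, and for $z\in X^\delta$ set
\[
\nrm z\nrm_{\eps,\delta}:=\big(\|Bz\|_{{Y^\delta}'}^2+\|Cz\|_W^2+\eps^2\|Lz\|_H^2\big)^{1/2}.
\]
Since $Y^\delta\subset Y$ and $\|\cdot\|_{Y^\delta}\eqsim\|\cdot\|_Y$ on $Y^\delta$, one has $\|w\|_{{Y^\delta}'}\lesssim\|w\|_{Y'}$ for every $w\in Y'$; conversely, \eqref{inf-sup} together with \eqref{equiv} gives $\|Bz\|_{{Y^\delta}'}\gtrsim\varrho\,\|Bz\|_{Y'}$ for all $z\in X^\delta$ (both sides vanishing when $Bz=0$). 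Hence $\nrm z\nrm_{\eps,\delta}\eqsim\nrm z\nrm_\eps$ for $z\in X^\delta$, uniformly in $\delta$. In particular $\nrm\cdot\nrm_{\eps,\delta}$ is a norm on $X^\delta$ (recall $\nrm\cdot\nrm_\eps$ is a norm on $X$ by injectivity of $(A,L)$), so the functional minimized in \eqref{13}, whose homogeneous quadratic part is $\nrm\cdot\nrm_{\eps,\delta}^2$, is strictly convex and coercive on the finite-dimensional space $X^\delta$; this yields the claimed existence and uniqueness of $u_\eps^\delta$.

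Write the functional in \eqref{13} as $\mathcal E^\delta(z)^2:=\|Bz-g\|_{{Y^\delta}'}^2+\|Cz-h\|_W^2+\eps^2\|Lz\|_H^2$, so that $\mathcal E^\delta(z)$ is the distance, in the product norm $\|\cdot\|_{{Y^\delta}'}\times\|\cdot\|_W\times\eps\|\cdot\|_H$, of $(Bz,Cz,Lz)$ to the fixed datum $(g,h,0)$. Thus $\mathcal E^\delta$ obeys a triangle inequality, and $|\mathcal E^\delta(z_1)-\mathcal E^\delta(z_2)|\le\nrm z_1-z_2\nrm_{\eps,\delta}$ for $z_1,z_2\in X^\delta$. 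Fix an arbitrary $z\in X^\delta$. Since $u_\eps^\delta$ minimizes $\mathcal E^\delta$ over $X^\delta$,
\[
\nrm z-u_\eps^\delta\nrm_{\eps,\delta}\le\mathcal E^\delta(z)+\mathcal E^\delta(u_\eps^\delta)\le 2\,\mathcal E^\delta(z).
\]
On the other hand, inserting $u$, applying the triangle inequality componentwise, and using $\|\cdot\|_{{Y^\delta}'}\lesssim\|\cdot\|_{Y'}$, one obtains
\[
\mathcal E^\delta(z)\lesssim\|Bu-g\|_{Y'}+\|Cu-h\|_W+\eps\|Lu\|_H+\nrm u-z\nrm_\eps .
\]

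Combining these two displays with $\nrm z-u_\eps^\delta\nrm_\eps\eqsim\nrm z-u_\eps^\delta\nrm_{\eps,\delta}$ (valid since $z-u_\eps^\delta\in X^\delta$) and the triangle inequality $\nrm u-u_\eps^\delta\nrm_\eps\le\nrm u-z\nrm_\eps+\nrm z-u_\eps^\delta\nrm_\eps$ gives
\[
\nrm u-u_\eps^\delta\nrm_\eps\lesssim\|Bu-g\|_{Y'}+\|Cu-h\|_W+\eps\|Lu\|_H+\nrm u-z\nrm_\eps ,
\]
and taking the infimum over $z\in X^\delta$ yields \eqref{4}; since $\|Bu-g\|_{Y'}+\|Cu-h\|_W\lesssim\|Au-f\|_V\le\tau$, this is \eqref{1}. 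The only non-routine point—everything else being bookkeeping with triangle inequalities—is the lower bound $\|Bz\|_{{Y^\delta}'}\gtrsim\|Bz\|_{Y'}$ on $BX^\delta$: this is precisely where the LBB hypothesis \eqref{inf-sup} is invoked, and it is what makes all the constants independent of $\delta$.
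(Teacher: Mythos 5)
Your argument is correct, and it is genuinely different from (and considerably shorter than) the paper's proof. You use the standard two-step quasi-optimality argument for minimal-residual methods: the minimality of $u_\eps^\delta$ plus the triangle inequality in the product (semi-)norm give $\nrm z-u_\eps^\delta\nrm_{\eps,\delta}\le 2\,\mathcal E^\delta(z)$ for any $z\in X^\delta$, and the only substantive input is the uniform equivalence $\|Bz\|_{{Y^\delta}'}\eqsim\|Bz\|_{Y'}$ on $X^\delta$, which you correctly extract from \eqref{inf-sup} combined with \eqref{equiv} (the upper bound holding on all of $Y'$, the lower bound only on $BX^\delta$, which is all you need since you apply it to $z-u_\eps^\delta\in X^\delta$). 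This bypasses the paper's machinery entirely: no Riesz lift, no mixed saddle-point system \eqref{5}, no auxiliary variables $(\theta,\mu,\chi)$ to prove LBB stability of the extended form $e$, and, notably, no $\zeta$-perturbation. The paper needs the $\zeta$-regularized continuous minimizer $u_{\eps,\zeta}$ precisely because it compares $u_\eps^\delta$ with a minimizer over all of $X$, which need not exist when $\ran(B,C,L)$ is not closed; your comparison is only between $u_\eps^\delta$, an arbitrary $z\in X^\delta$, and the exact $u$ inserted through the data, so the non-closedness issue never arises. What the longer route in the paper buys is not the estimate \eqref{4} itself but the uniform inf-sup stability \eqref{6} of the mixed bilinear form $d_{\eps,\zeta}$, which the paper reuses downstream: it justifies computing $u_\eps^\delta$ from the mixed system \eqref{14}, identifies its second component $v_\eps^\delta$ as the lifted residual exploited in the a posteriori estimator of Sect.~\ref{Sapost}, and underlies the comparison \eqref{11} versus \eqref{12} with the purely regularization-based approach; your proof delivers none of this side information (though unique solvability of \eqref{13}, which you do prove via strict convexity on the finite-dimensional $X^\delta$, is enough for the theorem as stated). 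One last cosmetic point: your final reduction to \eqref{1} via $\|Bu-g\|_{Y'}+\|Cu-h\|_W\lesssim\|Au-f\|_V\le\tau$ matches the paper's reading of \eqref{4} as implying \eqref{1} under \eqref{problem}, so nothing is missing there.
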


\begin{proof} 
Initially we assume that $\|\cdot\|_{Y^\delta}= \|\cdot\|_Y$
on $Y^\delta$, and postpone the discussion about the case where
we have only a uniform equivalence \eqref{equiv}
until the end of the proof.

The basic idea behind the proof is to use a suitable intermediary $u_\eps\in X$
to estimate then
$\nrm u-u_\eps\nrm_\eps$ and $\nrm u_\eps -u_\eps^\delta \nrm_\eps$. Ideally,
$u_\eps$ should be the minimizer over $z \in X$ of $\|Bz-g\|^2_{Y'}+\|Cz-h\|^2_{W}+\eps^2\|L z\|^2_H$. Since we are not insisting of $\ran(B,C,L)$ being closed
(cf. Remark~\ref{rem:closedrange}), this
minimizer would not necessarily exist. Therefore, more work is required to
identify such uniform intermediaries via a perturbation that
ensures closedness.
To that end, for $\zeta \geq 0$ we equip $X$ with the additional norm
$$
\nrm\cdot\nrm_{\eps,\zeta}:=\sqrt{\|B \cdot\|_{Y'}^2+\|C \cdot\|_{W}^2+\eps^2\|L\cdot\|_H^2+\zeta^2\|\cdot\|_X^2}.
$$
Since  $\ran(B,C,L,\identity)$ \emph{is} closed, for $\zeta>0$ 
the minimizer $u_{\eps,\zeta}$ over $z \in X$ of
$$
G_{\eps,\zeta}(z):= \|Bz-g\|_{{Y}'}^2+\|Cz-h\|_{W}^2+\eps^2\|L z\|_H^2+\zeta^2\|z\|_X^2
$$
does exist uniquely as the solution of the Euler-Lagrange equations
$$
\langle B \tilde{z},B u_{\eps,\zeta}-g\rangle_{Y'}+\langle C \tilde{z},Cu_{\eps,\zeta}-h\rangle_W+\eps^2 \langle L \tilde{z},L u_{\eps,\zeta}\rangle_H+
\zeta^2 \langle \tilde{z},u_{\eps,\zeta}\rangle_X=0 \quad(\tilde{z} \in X).
$$ 

To deal with the inner product in $Y'$ we employ the Riesz \emph{lifter} ($=$ inverse Riesz \emph{map}) $R_{Y}\colon Y' \rightarrow Y$, defined for $f\in Y'$ by 
$$
\langle \tilde{z},R_{Y}f\rangle_Y=
f(\tilde{z}) ,\quad \forall\, \tilde{z} \in Y,
$$ 
and introduce the  lifted residual
\be
\label{RL}
v_{\eps,\zeta}:=R_{Y}(g-B u_{\eps,\zeta}).
\ee
We can then write
$$
\langle B \tilde{z},B u_{\eps,\zeta}-g\rangle_{Y'}=- \langle v_{\eps,\zeta}, R_{Y} B \tilde{z}\rangle_Y=-(B \tilde{z})(v_{\eps,\zeta}),
$$
from which one infers that $(u_{\eps,\zeta},v_{\eps,\zeta}) \in X \times Y$ is the (unique) solution of 
\begin{align*}
&\underbrace{(B \tilde{z})(v_{\eps,\zeta})
+(Bu_{\eps,\zeta})(\tilde{v})+\langle \tilde{v},v_{\eps,\zeta}\rangle_Y
-\langle C \tilde{z},C u_{\eps,\zeta}\rangle_{W}-\eps^2\langle L \tilde{z},L u_{\eps,\zeta}\rangle_H-\zeta^2\langle \tilde{z},u_{\eps,\zeta}\rangle_X}_{d_{\eps,\zeta}((u_{\eps,\zeta},v_{\eps,\zeta}),(\tilde{z},\tilde{v})):=\hspace*{20em}}\\
&\hspace*{15em}
=\ell(\tilde{z},\tilde{v}):=g(\tilde{v})-\langle C \tilde{z},h\rangle_{W} \quad((\tilde{z},\tilde{v}) \in X \times Y).
\end{align*}
The symmetric bilinear form $d_{\eps,\zeta}$ on $((X,\nrm\cdot\nrm_{\eps,\zeta})\times Y)\times ((X,\nrm\cdot\nrm_{\eps,\zeta})\times Y)$ is bounded (uniformly in $\eps>0$ and $\zeta \geq 0$).

Similarly as in the continuous case, for $\delta \in \Delta$ the minimizer $u^\delta_{\eps,\zeta}$ over $X^\delta$ of 
$$
G^\delta_{\eps,\zeta}(z):=\sup_{0 \neq v \in Y^\delta} \frac{|(Bz-g)(v)|^2}{\|v\|_{Y}^2}
+\|Cz-h\|_{W}^2+\eps^2\|L z\|_H^2+\zeta^2\|z\|_X^2
$$
exists uniquely.
Thanks to $\dim X^\delta<\infty$, this holds even true for $\zeta=0$ (where $u^\delta_{\eps,0}=u^\delta_\eps$ is the solution of \eqref{13})

Defining in analogy to \eqref{RL},
$$
\langle \tilde z,v^\delta_{\eps,\zeta}\rangle_Y= (g-B u^\delta_{\eps,\zeta})(\tilde z),
\quad \tilde z\in Y^\delta,
$$
the pair  $(u^\delta_{\eps,\zeta},v^\delta_{\eps,\zeta}) \in X^\delta \times Y^\delta$ solves the Galerkin system
\be 
\label{5}
d_{\eps,\zeta}\big((u^\delta_{\eps,\zeta},v^\delta_{\eps,\zeta}),(\tilde{z},\tilde{v})\big)=\ell(\tilde{z},\tilde{v}) \quad((\tilde{z},\tilde{v}) \in X^\delta \times Y^\delta).
\ee
We will demonstrate later below that
\be \label{6}
\inf_{\delta \in \Delta,\,\eps>0,\,\zeta\geq 0} \inf_{0 \neq (z,v)\in X^\delta \times Y^\delta} \sup_{0 \neq (\tilde{z},\tilde{v})\in X^\delta \times Y^\delta} \frac{d_{\eps,\zeta}\big((z,v),(\tilde{z},\tilde{v})\big)}{\sqrt{\nrm z\nrm_{\eps,\zeta}^2+\|v\|_Y^2}\sqrt{\nrm \tilde{z}\nrm_{\eps,\zeta}^2+\|\tilde{v}\|_Y^2}}>0.
\ee
Assuming the validity of \eqref{6} for the moment,  the symmetry and boundedness of $d_{\eps,\zeta}$, then shows 
that for $\zeta>0$,
\be \label{11}
\|v_{\eps,\zeta} -v^\delta_{\eps,\zeta}\|_Y+ \nrm u_{\eps,\zeta} -u^\delta_{\eps,\zeta}\nrm_{\eps,\zeta}
\lesssim \min_{(\tilde{z},\tilde{v}) \in X^\delta \times Y^\delta}\big\{
 \|v_{\eps,\zeta} -\tilde{v}\|_Y+\nrm u_{\eps,\zeta} -\tilde{z}\nrm_{\eps,\zeta}\big\}.
 \ee
uniformly in $\eps$ (see e.g., \cite[Thm.~3.1]{249.99}).
 
Now, for any $u\in X$ we simply estimate $\nrm u -u^\delta_{\eps,\zeta}\nrm_{\eps,\zeta} \leq \nrm u -u_{\eps,\zeta}\nrm_{\eps,\zeta}+\nrm u_{\eps,\zeta} -u^\delta_{\eps,\zeta}\nrm_{\eps,\zeta}$ and deduce first from \eqref{11} that
we have for the second term
\begin{align} 
\label{2term}
\nrm u_{\eps,\zeta} -u^\delta_{\eps,\zeta}\nrm_{\eps,\zeta}
&\lesssim  \|v_{\eps,\zeta}\|_Y+  \min_{\tilde{z} \in X^\delta} \nrm u_{\eps,\zeta} -\tilde{z}\nrm_{\eps,\zeta}\nonumber\\ 
& \leq \|g-Bu_{\eps,\zeta}\|_{Y'}+  \nrm u-u_{\eps,\zeta}\nrm_{\eps,\zeta}+  \min_{\tilde{z} \in X^\delta} \nrm u -\tilde{z}\nrm_{\eps,\zeta}\nonumber\\ 
&  \leq \|g-Bu\|_{Y'}+ 2 \nrm u-u_{\eps,\zeta}\nrm_{\eps,\zeta}+  \min_{\tilde{z} \in X^\delta} \nrm u -\tilde{z}\nrm_{\eps,\zeta}.
\end{align}
Regarding the first term, 
an application of the triangle inequality and optimality of $u_{\eps,\zeta}$
give
\begin{align*}
 \nrm u -u_{\eps,\zeta}\nrm_{\eps,\zeta} &\leq \sqrt{G_{\eps,\zeta}(u)}+\sqrt{G_{\eps,\zeta}(u_{\eps,\zeta})}\\
 & \leq 2 \sqrt{G_{\eps,\zeta}(u)}
 = 2 \sqrt{\|Bu-g\|_{Y'}^2+\|Cu-h\|_W^2+\eps^2 \|L u\|_H^2+\zeta^2 \|u\|_X^2},
\end{align*}
which together with \eqref{2term} yields
\be \label{23}
\nrm u -u_{\eps,\zeta}^\delta \nrm_{\eps,\zeta} \lesssim \|Bu-g\|_{Y'}\!+\!\|Cu-h\|_{W}\!+\!\min_{z \in X^\delta}\nrm u-z\nrm_{\eps,\zeta}\!+\!\eps \|Lu\|_H\!+\!\zeta\|u\|_X
\ee
(\emph{uniformly} in $\delta \in \Delta$, $\eps,\zeta>0$).

Now \emph{fixing} $\delta \in \Delta$ and $\eps>0$, the norms $\nrm\cdot\nrm_{\eps,\zeta}$ on $X^\delta$ are equivalent uniformly in $\zeta \in [0,1]$,
and $\lim_{\zeta \downarrow 0} \min_{z \in X^\delta}\nrm u-z\nrm_{\eps,\zeta}=\min_{z \in X^\delta}\nrm u-z\nrm_{\eps}$.
Introducing the operator  $D^\delta_{\eps,\zeta}\in \cL(X^\delta\times Y^\delta,(X^\delta\times Y^\delta)')$, defined by
$$
(D^\delta_{\eps,\zeta}(u^\delta_{\eps,\zeta},v^\delta_{\eps,\zeta}))(\tilde{z},\tilde{v}):=
d_{\eps,\zeta}\big((u^\delta_{\eps,\zeta},v^\delta_{\eps,\zeta}),(\tilde{z},\tilde{v})\big),
$$
and writing $(D^\delta_{\eps,\zeta})^{-1}=(D^\delta_{\eps,0})^{-1}+(D^\delta_{\eps,\zeta})^{-1}(D^\delta_{\eps,0}-D^\delta_{\eps,\zeta})(D^\delta_{\eps,0})^{-1}$, the uniform stability \eqref{6} and $\lim_{\zeta \downarrow 0} D^\delta_{\eps,\zeta}=D^\delta_{\eps,0}$ show  that $\lim_{\zeta \downarrow 0} u_{\eps,\zeta}^\delta=u_{\eps}^\delta$.
We conclude that by taking the limit for $\zeta \downarrow 0$ in \eqref{23} 
the proof of \eqref{4} for the case that $\|\cdot\|_{Y^\delta}=
\|\cdot\|_Y$ is completed as soon as we have confirmed the validity of \eqref{6}.

The latter statement \eqref{6} left to be shown is equivalent to uniform stability 
of the variational problem \eqref{5}, i.e.,
\be \label{7}
\nrm u_{\eps,\zeta}^\delta\nrm_{\eps,\zeta}+\|v_{\eps,\zeta}^\delta\|_Y \lesssim \|\ell\|_{(X^\delta,\nrm \cdot\nrm_{\eps,\zeta})\times Y^\delta)'}
\ee
(uniformly in $\delta$, $\eps>0$, and $\zeta \geq 0$) which requires 
utilizing \eqref{inf-sup}. To that end, we introduce as additional variables 
$\theta=-C u_{\eps,\zeta}^\delta$, $\mu=-\eps L u_{\eps,\zeta}^\delta$, and $\chi=-\zeta u_{\eps,\zeta}^\delta$ and the bilinear form
\be \label{eq:e}
e(z,(\tilde{v},\tilde{\theta},\tilde{\mu},\tilde{\chi})):=(B z)(\tilde{v})+\langle \tilde{\theta},C z\rangle_W+\eps \langle \tilde{\mu}, L z\rangle_H+\zeta\langle \tilde{\chi},z\rangle_X
\ee
over $X^\delta\times (Y^\delta\times W\times  H\times X)$.
Then, \eqref{5} is equivalent to
finding $(u^\delta_{\eps,\zeta},v^\delta_{\eps,\zeta},\theta,\mu,\chi) \in X^\delta \times Y^\delta \times W \times H \times X$ such that
\be \label{8}
\begin{split}
& \langle (\tilde{v},\tilde{\theta},\tilde{\mu},\tilde{\chi}),(v^\delta_{\eps,\zeta},\theta,\mu,\chi)\rangle_{Y \times W \times H \times X}+e(u_{\eps,\zeta}^\delta,(\tilde{v},\tilde{\theta},\tilde{\mu},\tilde{\chi}))\\
&\hspace*{6cm}  +
e(\tilde{z},(v_{\eps,\zeta}^\delta,\theta,\mu,\chi))=\ell(\tilde{z},\tilde{v})
\end{split}
\ee
for all $(\tilde{z},\tilde{v},\tilde{\theta},\tilde{\mu},\tilde{\chi}) \in X^\delta \times Y^\delta \times W \times H \times X$.  

Recall from \eqref{inf-sup} that for any $\sigma \in (0,1)$, given $z \in X^\delta$ we can find $\tilde{v} \in Y^\delta$ with $\|\tilde{v}\|_Y=\|Bz\|_{Y'}$ and $ (B z)(\tilde{v}) \geq \sigma \rho \|Bz\|_{Y'}^2$. Then
take $(\tilde{\theta},\tilde{\mu},\tilde{\chi})=(Cz,\eps L z,\zeta z)$, to
conclude that 
\begin{align*}
 e(z,(\tilde{v},\tilde{\theta},\tilde{\mu},\tilde{\chi}))
&\geq \sigma \rho \|Bz\|_{Y'}^2 +\|C z\|_W^2+\eps^2\|L z\|_H^2+\zeta^2\|z\|_X^2
\\
&\geq \sigma \varrho \nrm z\nrm_{\eps,\zeta}  \sqrt{
\|\tilde{v}\|_Y^2 +\|\tilde{\theta}\|_W^2+\|\tilde{\mu}\|_H^2+\|\tilde{\chi}\|_X^2}.
\end{align*}
From this LBB stability we conclude that \eqref{8} is indeed uniformly stable, i.e., that $\nrm u^\delta_{\eps,\zeta}\nrm_{\eps,\zeta}+\|v^\delta_{\eps,\zeta}\|_Y+\|\theta\|_W+\|\mu\|_H +\|\chi\|_X \lesssim  \|\ell\|_{(X^\delta,\nrm \cdot\nrm_{\eps,\zeta})\times Y^\delta)'}$, which implies \eqref{7}.

Finally, we discuss the case that $\|\cdot\|_{Y^\delta}\neq \|\cdot\|_Y$. We write $Y=Y^\delta \oplus (Y^\delta)^{\perp_Y} \simeq Y^\delta \times (Y^\delta)^{\perp_Y}$, and replace for the $Y^\delta$-component the inner product $\langle\cdot,\cdot\rangle_Y$ by $\langle \cdot,\cdot\rangle_{Y^\delta}$.
Then we are back in the case that $\|\cdot\|_{Y^\delta}= \|\cdot\|_Y$ on $Y^\delta$, and the proof so far shows that $u_\eps^\delta$ defined in \eqref{13} satisfies \eqref{4} with the $Y'$-norm at its right-hand side and in the definition of $\nrm\cdot\nrm_\eps$ at its left- and right-hand side being defined in terms of the so modified $Y$-norm.
Since by our assumption \eqref{equiv} this modified $Y$-norm, and so its resulting dual norm, is equivalent to the original norm, uniformly in $\delta \in \Delta$, the proof is completed.
\end{proof}

In view of \eqref{5} and the last paragraph in the above proof, note that the least squares approximation $u_\eps^\delta$ defined in \eqref{13}  can be computed as the first component of the pair $(u_\eps^\delta,v_\eps^\delta) \in X^\delta \times Y^\delta$ that solves the \emph{mixed system}
\be \label{14}
\framebox{$\begin{aligned}
&(B \tilde{z})(v_\eps^\delta)
+(B u_\eps^\delta)(\tilde{v})+\langle \tilde{v},v_\eps^\delta\rangle_{Y^\delta}\\
&-\langle C \tilde{z},C u_\eps^\delta\rangle_{W}-\eps^2\langle L \tilde{z},L u_\eps^\delta\rangle_H=g(\tilde{v})-\langle C \tilde{z},h\rangle_{W} \quad((\tilde{z},\tilde{v}) \in X^\delta \times Y^\delta).
\end{aligned}$}
\ee
For a suitable choice of $\langle \cdot,\cdot\rangle_{Y^\delta}$, in Sect.~\ref{Sposdef} we will reduce this system to a symmetric positive definite system for the primal variable $u_\eps^\delta$.

\begin{remark}[Relation to existing work] 
Suppose that the norm $\|\cdot\|_X$ is computable (which is not the case
in Example~\ref{ex2}) and assume for simplicity that $\|\cdot\|_{Y^\delta}=\|\cdot\|_Y$. Then by replacing  $\eps^2\|L z\|_H^2$ by the stronger regularizing term $\eps^2\|z\|_X^2$,
instead of minimizing \eqref{13} as in Theorem~\ref{thm:1} one may minimize the least-squares functional
$$
\hat{u}_\eps^\delta:=\argmin_{z \in X^\delta} \Big\{\sup_{0 \neq v \in Y^\delta} \frac{|(Bz-g)(v)|^2}{\|v\|_{Y}^2}
+\|Cz-h\|_{W}^2+\eps^2\|z\|_X^2\Big\}.
$$
Upon  replacing $\eps^2 \langle L \cdot,L\cdot\rangle_H$ by $\eps^2 \langle \cdot,\cdot\rangle_X$  in \eqref{5}, and the test function $\tilde{z}$ by $-\tilde{z}$,
this amounts to
  solving for $(\hat{u}_\eps^\delta,\hat{v}_\eps^\delta) \in X^\delta \times Y^\delta$ from
\be
\begin{split}
 \label{10}
-(B \tilde{z})&(\hat{v}_\eps^\delta)
+(B \hat{u}_\eps^\delta)(\tilde{v})+\langle \tilde{v},\hat{v}_\eps^\delta\rangle_{Y}
\\&+\langle C \tilde{z},C \hat{u}_\eps^\delta\rangle_{W}
+\eps^2\langle \tilde{z},\hat{u}_\eps^\delta\rangle_X=g(\tilde{v})+\langle C \tilde{z},h\rangle_{W} \quad((\tilde{z},\tilde{v}) \in X^\delta \times Y^\delta).
\end{split}
\ee
The bilinear form on $X \times Y$ on the left-hand side of \eqref{10} is bounded, uniformly in $\delta \in \Delta$.
Moreover, already \emph{without} assuming \eqref{inf-sup}, it is also \emph{coercive}, although with the unfavorable coercivity
 constant $\eqsim \eps^2$.
With $(\hat{u}_\eps,\hat{v}_\eps) \in X \times Y$  denoting the solution of \eqref{10} under testing with all $(\tilde{z},\tilde{v}) \in X \times Y$, one arrives at the estimate
\be \label{12}
\|\hat{v}_\eps -\hat{v}^\delta_\eps\|_Y+ \| \hat{u}_\eps -\hat{u}^\delta_\eps\|_X \lesssim \eps^{-2} \inf_{(\tilde{z},\tilde{v}) \in X^\delta \times Y^\delta}
\{ \|\hat{v}_\eps -\tilde{v}\|_Y+\| \hat{u}_\eps -\tilde{z}\|_X\}.
 \ee

This approach to solve \eqref{10} without ensuring the LBB stability \eqref{inf-sup} has been suggested in \cite{19.897,28.5}. 
In \cite[Thm.~2.7]{28.5} it was shown that $\lim_{\eps \downarrow 0} (\hat{u}_\eps,\hat{v}_\eps) = (u,0)$ in $X \times Y$, where $u$ is the minimal norm solution of $(Bu,Cu)=(g,h)$, under the assumption that a solution exists. 
To conclude from \eqref{12} convergence of numerical approximations $u_\eps^\delta$ towards $u$ requires making assumptions on 
 the behavior of higher order norms of $(\hat{u}_\eps,\hat{v}_\eps)$ as functions of $\eps \downarrow 0$ (cf. \cite[Thm.~4.1]{19.897}). On the other hand, this approach does not assume any conditional stability, and it concerns convergence in norm instead of convergence in the functional $j$ associated to a conditional stability assumption.

Notice, however, that if conditional stability is available and the weaker notion of convergence in the associated functional is relevant,
then the difference between \eqref{12} and the quasi-optimal bound \eqref{11} shows that is indeed very rewarding
 to select $Y^\delta$ dependent on $X^\delta$ such that \eqref{inf-sup} is valid. 
 \end{remark}

Uniform discrete inf-sup stability \eqref{inf-sup} plays a  pivotal role in our approach. We recall that it is equivalent to the existence of uniformly bounded `Fortin operators'. Precisely, the following statement holds.

\begin{theorem}[{\cite[Prop.~5.1]{249.992}}] \label{thm:Fortin} For general $B \in \cL(X,Y')$, and closed subspaces $X^\delta$ and $Y^\delta$ of Hilbert spaces $X$ and $Y$ with $B X^\delta \neq \{0\}$ and $Y^\delta \neq \{0\}$, let
\be \label{fortin}
Q^\delta \in \cL(Y,Y^\delta) \text{ with } (B X^\delta)\big((\identity - Q^\delta)Y\big)=0.
\ee
Then $\xi^\delta := \inf_{\{z \in X^\delta\colon B z \neq 0\}} \sup_{0 \neq v \in Y^\delta} \frac{|(B z)(v)|}{\|B z\|_{Y'} \|v\|_Y} \geq \|Q^\delta\|_{\cL(Y,Y)}^{-1}$.

Conversely, when $\xi^\delta>0$, and $\ran B$ is closed or $\dim X^\delta <\infty$, then there exists a $Q^\delta$ as in \eqref{fortin}, being even a projector, with
$\|Q^\delta\|_{\cL(Y,Y)} = 1/\xi^\delta$.
\end{theorem}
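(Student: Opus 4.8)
\emph{Plan.} I would prove the two directions separately. For the implication that produces a lower bound on $\xi^\delta$, the key observation is that the Fortin identity in \eqref{fortin} reads $(Bz)(Q^\delta v)=(Bz)(v)$ for every $z\in X^\delta$ and $v\in Y$. So, fixing $z\in X^\delta$ with $Bz\neq 0$ and choosing $v\in Y$ with $(Bz)(v)\neq 0$, the vector $Q^\delta v\in Y^\delta$ is nonzero and
\[
\sup_{0\neq w\in Y^\delta}\frac{|(Bz)(w)|}{\|w\|_Y}\;\geq\;\frac{|(Bz)(Q^\delta v)|}{\|Q^\delta v\|_Y}\;\geq\;\frac{|(Bz)(v)|}{\|Q^\delta\|_{\cL(Y,Y)}\,\|v\|_Y}.
\]
Taking the supremum over $v\in Y$ turns the right-hand side into $\|Q^\delta\|_{\cL(Y,Y)}^{-1}\|Bz\|_{Y'}$; dividing by $\|Bz\|_{Y'}$ and taking the infimum over such $z$ gives $\xi^\delta\geq\|Q^\delta\|_{\cL(Y,Y)}^{-1}$. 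This direction is routine.

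For the converse I would \emph{construct} the Fortin operator. Assume $\xi^\delta>0$. Put $M:=\{w\in Y^\delta:(Bz)(w)=0\text{ for all }z\in X^\delta\}$, a closed subspace of $Y^\delta$ (an intersection of kernels of continuous functionals), and let $U^\delta$ be its orthogonal complement within the Hilbert space $Y^\delta$, so that $Y^\delta=M\oplus U^\delta$; one checks that the supremum defining $\xi^\delta$ may be taken over $U^\delta$ in place of $Y^\delta$, and in particular $U^\delta\neq\{0\}$ since $BX^\delta\neq\{0\}$. I would then define, for $v\in Y$, the vector $Q^\delta v$ to be \emph{the} element of $U^\delta$ satisfying $(Bz)(Q^\delta v)=(Bz)(v)$ for all $z\in X^\delta$. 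Such an element is unique, as two of them differ by an element of $M\cap U^\delta=\{0\}$, and this uniqueness automatically forces $Q^\delta\colon Y\to Y^\delta$ to be linear and to restrict to the identity on $U^\delta$, hence to be a nonzero projector onto $U^\delta\subseteq Y^\delta$.

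It then remains to establish existence of $Q^\delta v$ together with the sharp bound $\|Q^\delta v\|_Y\leq\|v\|_Y/\xi^\delta$. For this I would use that the restriction map $\Phi\colon BX^\delta\to(Y^\delta)'$, $\beta\mapsto\beta|_{Y^\delta}$, is injective and bounded below by $\xi^\delta$ (this is exactly the hypothesis $\xi^\delta>0$), so that $\ell_v:=[\,\beta\mapsto\beta(v)\,]\in(BX^\delta)'$, of norm at most $\|v\|_Y$, composed with $\Phi^{-1}$ defines a functional on $\ran\Phi\subseteq(Y^\delta)'$ of norm at most $\|v\|_Y/\xi^\delta$; extending it to $(Y^\delta)'$ by Hahn--Banach and using reflexivity of the Hilbert space $Y^\delta$ yields some $q_v\in Y^\delta$ with $\|q_v\|_Y\leq\|v\|_Y/\xi^\delta$ and $(Bz)(q_v)=(Bz)(v)$ for all $z\in X^\delta$; the orthogonal projection of $q_v$ onto $U^\delta$ is then the required $Q^\delta v$, and projecting does not increase the norm. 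This gives $\|Q^\delta\|_{\cL(Y,Y)}\leq 1/\xi^\delta$, while the reverse inequality is the first part of the theorem applied to the $Q^\delta$ just built, so $\|Q^\delta\|_{\cL(Y,Y)}=1/\xi^\delta$. The main obstacle I anticipate lies precisely in this last step: one must produce a genuinely \emph{linear} operator rather than a pointwise Hahn--Banach selection while keeping the norm bound sharp, and one must treat the infinite-dimensional case carefully. When $\dim X^\delta<\infty$ the space $BX^\delta$ is finite-dimensional, hence closed and complete, and the construction collapses to elementary linear algebra; in general the passage to $U^\delta$ by orthogonal projection is the device that simultaneously restores linearity and yields the optimal constant $1/\xi^\delta$.
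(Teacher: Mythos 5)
Your proof is correct in both directions; note, though, that the paper itself gives no proof of Theorem~\ref{thm:Fortin} --- it is imported verbatim from \cite[Prop.~5.1]{249.992} --- so the comparison can only be with that reference's (standard) argument, and your route is essentially the classical minimal-norm construction: you solve the constraint equations $(Bz)(w)=(Bz)(v)$, $z\in X^\delta$, by duality, and the passage to the orthogonal complement $U^\delta$ of $M$ inside $Y^\delta$ is exactly what makes the pointwise Hahn--Banach selection canonical, hence linear and idempotent, while the sharp constant follows from combining $\|Q^\delta\|_{\cL(Y,Y)}\le 1/\xi^\delta$ with the (correctly proved) first part. Two observations. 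First, Hahn--Banach is not really needed in this Hilbert-space setting: the functional $\ell_v\circ\Phi^{-1}$ extends by continuity to the closure of $\ran\Phi$ in $(Y^\delta)'$ and by zero on its orthogonal complement, after which reflexivity of $Y^\delta$ gives your $q_v$; equivalently, the lower bound $\|\Phi\beta\|_{(Y^\delta)'}\ge\xi^\delta\|\beta\|_{Y'}$ extends from $BX^\delta$ to its closure, so one may also invoke the closed range theorem to obtain a bounded right inverse of norm $\le 1/\xi^\delta$. Second, and more interestingly, your argument nowhere uses the hypothesis that $\ran B$ is closed or that $\dim X^\delta<\infty$: well-definedness and boundedness of $\ell_v\circ\Phi^{-1}$ need only $\xi^\delta>0$, and the extension/representation step is insensitive to $BX^\delta$ being non-closed. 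So you actually establish the converse under the sole assumption $\xi^\delta>0$; the extra hypothesis in the statement reflects the proof in the cited reference rather than anything your construction requires. Finally, your $Q^\delta$ is a projector with range $U^\delta$, which may be a proper subspace of $Y^\delta$; this is fine, since \eqref{fortin} only asks for an operator in $\cL(Y,Y^\delta)$, and the theorem only asks for idempotence and the norm identity.
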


Using this theorem, in Section~\ref{sec:inf-sup} we will verify condition \eqref{inf-sup} for the examples from Section \ref{Sappls}. 

Finally, we note that the condition \eqref{inf-sup} may be slightly relaxed:

\begin{remark}[Slight relaxation of inf-sup condition \eqref{inf-sup}] 
\label{rem1}
By an application of Young's inequality, it is not difficult to show that the reasoning that provided the LBB stability of the bilinear form $e$ from \eqref{eq:e}, and 
consequently the proof of Theorem~\ref{thm:1}, still applies when, for some constant $\mu <2\sqrt{\varrho}$, \eqref{inf-sup} is relaxed to $ \sup_{0 \neq v \in Y^\delta} \frac{|(B z)(v)|}{ \|v\|_Y} \geq \varrho \|B z\|_{Y'}-\mu\sqrt{\|Cz\|_W^2+\eps^2\|Lz\|_H^2}$.
\end{remark}

\section{Choice of $\langle \cdot,\cdot\rangle_{Y^\delta}$, and the reformulation of  \eqref{14} as a symmetric positive definite system} \label{Sposdef}
Let $G_{Y}^\delta ={G_{Y}^\delta}' \in \Lis({Y^\delta}',Y^\delta)$ be such that 
\be \label{Gd}
\|G_{Y}^\delta f \|_Y^2 \eqsim f(G_{Y}^\delta f) \quad(f \in {Y^\delta}',\,\delta \in \Delta).
\ee
Assuming its application can be computed `efficiently',  such an operator $G_{Y}^\delta$ is often called a (uniform) \emph{preconditioner}. See Remark \ref{rem:available} below for scenarios where such preconditioners are
available. This
$G_{Y}^\delta$ can be employed to \emph{define}
\be \label{16}
\langle v, \tilde{v}\rangle_{Y^\delta}:=((G_{Y}^\delta)^{-1} \tilde{v})(v)\quad (v,\tilde{v} \in Y^\delta),
\ee
which, in view of \eqref{Gd}, yields $\|\cdot\|_{Y^\delta} \eqsim \|\cdot\|_Y$ on $Y^\delta$, i.e., \eqref{equiv}.
The definition of $\langle \cdot,\cdot\rangle_{Y^\delta}$ shows that $G_{Y}^\delta$ is the Riesz lifter
associated to the Hilbert space $(Y^\delta,\langle \cdot,\cdot\rangle_{Y^\delta})$. Since a Riesz lifter is an isometry, it follows that
$$
\sup_{0 \neq v \in Y^\delta} \frac{|f(v)|^2}{\|v\|_{Y^\delta}^2} = \|G_{Y}^\delta f\|_{Y^\delta}^2= f(G_{Y}^\delta f)\quad (f \in {Y^\delta}').
$$
We conclude that for $\langle \cdot,\cdot\rangle_{Y^\delta}$ as in \eqref{16}, $u_\eps^\delta$ defined in \eqref{13} satisfies
$$
u_\eps^\delta=\argmin_{z \in X^\delta} \Big\{(Bz-g)(G_{Y}^\delta(Bz-g))
+\|Cz-h\|_{W}^2+\eps^2\|L z\|_H^2\Big\},
$$
meaning that $u_\eps^\delta \in X^\delta$ solves the Euler-Lagrange equations
\be \label{18}
\framebox{$(B u_\eps^\delta)(G_{Y}^\delta B \tilde{z})+\langle C\tilde{z},C u_\eps^\delta\rangle_W+\eps^2 \langle L\tilde{z},L u_\eps^\delta\rangle_H=
g(G_{Y}^\delta B \tilde{z})+\langle C\tilde{z},h\rangle_W \quad(\tilde{z} \in X^\delta).$}
\ee

When $G_{Y}^\delta$ can be applied in linear complexity, the iterative solution of this symmetric positive definite system can be expected to be more efficient than the iterative solution of the mixed system \eqref{14} with $\langle\cdot,\cdot\rangle_{Y^\delta}=\langle\cdot,\cdot\rangle_Y$.
So even in the case that $\langle\cdot,\cdot\rangle_Y$ can be efficiently evaluated, it can be helpful to replace $\|\cdot\|_Y$ by the above $\|\cdot\|_{Y^\delta}$ in the definition \eqref{13} of the regularized least squares approximation $u_\eps^\delta$.

\begin{remark}
We arrived at \eqref{18} without introducing the Riesz lift of $g-B u_\eps^\delta \in {Y^\delta}'$ as a separate variable. Alternatively, eliminating this variable $v^\delta_\eps$ from the mixed system \eqref{14} using the definition \eqref{16} of $\langle \cdot,\cdot\rangle_{Y^\delta}$ also results in \eqref{18}.
\end{remark}

\begin{remark}[Some scenarios where uniform preconditioners are
available]
\label{rem:available}
Continuing the discussion preceding Theorem~\ref{thm:1}, uniform preconditioners $G_{Y}^\delta$ of linear complexity are for example available when $Y$ is a Sobolev space of either positive or negative possibly non-integer smoothness index, and the $Y^\delta$ are common finite element spaces. Wavelet preconditioners can be applied, but also multi-level preconditioners that make solely use of nodal bases as the BPX preconditioner (\cite{34.3}) for positive smoothness indices, and the preconditioner from
\cite{75.258} for negative smoothness indices. Another option for negative smoothness indices is given by the preconditioner from \cite{249.985} based on the `operator preconditioning' framework where the `opposite order' operator of linear complexity uses a multi-level hierarchy.
\end{remark}

\begin{remark}[Conditioning of \eqref{18}] \label{conditioning}
In the case that $\ran (B,C,L)$ is closed, then one has $\eps^2 \lesssim \frac{(B \cdot)(G_{Y}^\delta B \cdot)+\langle C\cdot,C \cdot\rangle_W+\eps^2 \langle L \cdot,L \cdot\rangle_H}{\|\cdot\|_X^2} \lesssim 1$ on $X^\delta$ (cf. Remark~\ref{rem:closedrange}).
So for $G_X^\delta ={G_X^\delta}' \in \Lis({X^\delta}',X^\delta)$ with
$\|G_X^\delta \cdot \|_X^2 \eqsim \cdot(G_X^\delta \cdot)$ on ${X^\delta}'$, 
the condition number of the system matrix corresponding to \eqref{18} preconditioned by $G_X^\delta$ is  $\lesssim \eps^{-2}$, 
directly tying solver efficiency to regularization.
\end{remark}

\section{A posteriori residual estimation} \label{Sapost}
In view of Theorem~\ref{thm:0}, and the inequality $\|A(u-u_\eps^\delta)\|_V \leq \tau+\|f-Au_\eps^\delta\|_V$,
it is desirable to have an \emph{a posteriori} estimate for
$\|f-A u_\eps^\delta\|_V$.
Indeed, for suitable $\eps$ it will give rise to a computable upper bound for $j(u-u_\eps^\delta)$.

Considering $V=Y'\times W$, $A=(B,C)$, and $f=(g,h)$, the issue is to approximate $\|g-Bu_\eps^\delta\|_{Y'}$. This is where Fortin operators again come into play. Recall from Theorem \ref{thm:Fortin}
 that \eqref{inf-sup} guarantees the existence of a
family of uniformly bounded Fortin operators
 $(Q^\delta)_{\delta \in \Delta}$.  
We can then  write $g-Bu_\eps^\delta={Q^\delta}'(g-Bu_\eps^\delta)+(\identity -Q^\delta)' g$, and obtain
\begin{align*}
\|g-Bu_\eps^\delta\|_{Y'} &\leq\|Q^\delta\|_{\cL(Y,Y)}  \sup_{0 \neq v \in Y^\delta}\frac{|(g-Bu_\eps^\delta)(v)|}{\|v\|_Y}+\underbrace{\|(\identity -Q^\delta)' g\|_{Y'}}_{\osc^\delta(g):=}\\
&\stackrel{\eqref{equiv}}{\eqsim}
\sup_{0 \neq v \in Y^\delta}\frac{|(g-Bu_\eps^\delta)(v)|}{\|v\|_{Y^\delta}}+\osc^\delta(g)
\end{align*}

Neglecting the term $\osc^\delta(g)$, known as \emph{data oscillation}, as well as the factor $\|Q^\delta\|_{\cL(Y,Y)}$, and using that 
$ \sup_{0 \neq v \in Y^\delta}\frac{|(g-Bu_\eps^\delta)(v)|}{\|v\|_Y} \eqsim \sup_{0 \neq v \in Y^\delta}\frac{|(g-Bu_\eps^\delta)(v)|}{\|v\|_{Y^\delta}}$, we will use 
$$
\sqrt{\sup_{0 \neq v \in Y^\delta}\frac{|(g-Bu_\eps^\delta)(v)|^2}{\|v\|^2_{Y^\delta}}+\|h-Cu_\eps^\delta\|_W^2}
$$
to estimate $\|f-Au_\eps^\delta\|_V$.

Having solved $u_\eps^\delta$ from either \eqref{14} or \eqref{18}, this estimator can be computed at the expense of computing only a few inner products.
Indeed with $v^\delta_\eps \in Y^\delta$ defined by $\langle v^\delta_\eps,v\rangle_{Y^\delta}=(g-Bu_\eps^\delta)(v)$ ($v \in Y^\delta$),
the supremum under the square root equals $\| v^\delta_\eps\|_{Y^\delta}^2$.
When $u_\eps^\delta$ is determined by solving the mixed system \eqref{14}, this $v^\delta_\eps$ is the second component of the solution.
When $u_\eps^\delta$ is determined by solving the symmetric positive definite system \eqref{18}, it holds that
 $v^\delta_\eps:=G_Y^\delta(g-Bu_\eps^\delta)$ and
$\| v^\delta_\eps\|_{Y^\delta}^2=(g-Bu_\eps^\delta)(G_Y^\delta(g-Bu_\eps^\delta))$.

Since $(\identity -Q^\delta)' B X^\delta=0$, it holds that
$$
\osc^\delta(g) \leq \|Q^\delta\|_{\cL(Y,Y)} \min_{z \in X^\delta}\|g-B z\|_{Y'},
$$
so that the term that we have ignored is in any case ${\mathcal O}(\min_{z \in X^\delta}\|g-B z\|_{Y'})$. Of course this is not completely satisfactory, because it does not ensure that our estimator of $\|f-Au_\eps^\delta\|_V$ is reliable. In many cases for a suitable choice of $(Y^\delta)_{\delta \in \Delta}$,  the Fortin interpolators can be constructed such that, in any case for smooth $u$ and $g$, $\osc^\delta(g)$ is of higher order than $\min_{z \in X^\delta}\|u-z\|_{X}$, in which case it can be expected that $\osc^\delta(g)$ is asymptotically \corr{negligible}. We do not discuss this issue further.


\section{Verification of the inf-sup condition}\label{sec:inf-sup}
 
In this section we establish the validity of the inf-sup condition \eqref{inf-sup} for the examples discussed earlier.

\subsection{Verification of \eqref{inf-sup} for the Cauchy problem for Poisson's equation (Example~\ref{ex1})} \label{SverificationPoisson}
Let $\Omega \subset \R^d$ be a polytope, and recall that $A=(B_1,B_2) \in \cL\big(H^1(\Omega),(H^1_{0,\Sigma^c}(\Omega) \times \widetilde{H}^{-\frac12}(\Sigma))'\big)$  where
$(B_1z)(v)=\int_{\Omega} \nabla z \cdot \nabla v \,dx$,
and
$B_2=\gamma_\Sigma$ (see notations introduced around eq.~\eqref{17}).
By viewing $B_2$ here as an operator in $\cL\big(H^1(\Omega), (\widetilde{H}^{-\frac12}(\Sigma))'\big)$, i.e., by using that $(\widetilde{H}^{-\frac12}(\Sigma))'=H^{\frac12}(\Sigma)$, we will avoid
having to evaluate $H^{\frac12}(\Sigma)$-norms of residuals.

Let $(\tria^\delta)_{\delta \in \Delta}$ being a family of conforming, uniformly shape regular partitions of $\overline{\Omega}$ into (closed) $d$-simplices, 
with $\cF(\tria^\delta)$ denoting the set of the (closed) faces of $\tria^\delta$, and $\partial \tria^\delta:=\cup \cF(\tria^\delta)$ its skeleton.
Assuming $\overline{\Sigma}$ to be the union of some $e \in \cF(\tria^\delta)$, we take
$$
X^\delta=\cS_{\tria^\delta}^{0,1}:=\{z \in C(\overline{\Omega})\colon z|_{T} \in {\mathcal P}_1(T) \,(T \in \tria^\delta)\},
$$
but expect that similar results can be shown for higher order finite element spaces.

We will approximate the solution of the Cauchy problem by the minimizer over $X^\delta$ of the regularized least-squares functional that, in an abstract setting, was introduced and analyzed in Sect.~\ref{LS}.
To show the inf-sup condition \eqref{inf-sup} for the triple $B$, $(X^\delta)_{\delta \in \Delta}$, and a suitable family of test spaces $(Y_1^\delta \times Y_2^\delta)_{\delta \in \Delta} \subset Y_1 \times Y_2:=
H^1_{0,\Sigma^c}(\Omega) \times \widetilde{H}^{-\frac12}(\Sigma)$, it suffices to show such inf-sup conditions for
$(B_i,(X^\delta)_{\delta \in \Delta},(Y_i^\delta)_{\delta \in \Delta})$ and $i \in \{1,2\}$ separately, which will be done in Propositions~\ref{prop1}-\ref{prop2}. Indeed, uniformly bounded $Q_i^\delta \in \cL(Y_i,Y_i^\delta)$ with $(B_i X^\delta)((\identity-Q_i^\delta)Y_i)=0$ give 
uniformly bounded $Q^\delta:=Q_2^\delta\times Q_2^\delta \in \cL(Y_1\times Y_2,Y_1^\delta \times Y_2^\delta)$ with $(B X^\delta)((\identity-Q^\delta)(Y_1 \times Y_2))=0$.

\begin{proposition} \label{prop1} For each $\delta \in \Delta$, let $\tria_s^\delta$ be a refinement of $\tria^\delta$ such that\footnote{Obviously, $\tria_s^\delta$ can be a further refinement of a partition that satisfies the listed requirements. Indeed, such a further refinement renders a $Y_1^\delta$ that is only larger.\label{firstfootnote}} each $T \in \tria^\delta$ is subdivided into a uniformly bounded number of uniformly shape regular $d$-simplices, and, when $d>1$, $\tria_s^\delta$ has a vertex interior to each $e \in \cF(\tria^\delta)$ with $e \not\subset \overline{\Sigma}$.
Then for
 $$
 Y_1^\delta:=\cS_{\tria_s^\delta}^{0,1} \cap H^1_{0,\Sigma^c}(\Omega),
 $$
 it holds that
 $$
 \inf_{\delta \in \Delta} \inf_{\{z \in X^\delta\colon B_1 z\neq 0\}}\sup_{\{0 \neq v \in Y_1^\delta\}}
 \frac{|(B_1 z)(v)|}{\|B_1 z\|_{H^1_{0,\Sigma^c}(\Omega)'}\|v\|_{H^1(\Omega)}} >0.
 $$
 \end{proposition}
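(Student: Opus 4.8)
The plan is to apply Theorem~\ref{thm:Fortin}. Since $\dim X^\delta<\infty$, the claimed uniform inf-sup bound follows once, for each $\delta\in\Delta$, we exhibit an operator $Q_1^\delta\in\cL\big(H^1_{0,\Sigma^c}(\Omega),Y_1^\delta\big)$ with $\sup_{\delta\in\Delta}\|Q_1^\delta\|_{\cL(H^1(\Omega),H^1(\Omega))}<\infty$ and $(B_1z)\big((\identity-Q_1^\delta)v\big)=0$ for all $z\in X^\delta$ and $v\in H^1_{0,\Sigma^c}(\Omega)$. First I would make this Fortin condition concrete. For $z\in X^\delta=\cS^{0,1}_{\tria^\delta}$ the field $\nabla z$ is piecewise constant over $\tria^\delta$, so applying the divergence theorem on each $T\in\tria^\delta$ gives, for $w\in H^1_{0,\Sigma^c}(\Omega)$, that $(B_1z)(w)=\int_\Omega\nabla z\cdot\nabla w\,dx=\sum_{T\in\tria^\delta}\nabla z|_T\cdot\sum_{e\in\cF(\tria^\delta),\,e\subset\partial T}n^T_e\int_e w\,ds$, where $n^T_e$ is the outward unit normal of $T$ on $e$. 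The faces $e$ with $e\subset\overline{\Sigma^c}$ do not contribute, because $w$ has vanishing trace there. Hence it suffices to construct $Q_1^\delta$ such that $w:=(\identity-Q_1^\delta)v$ satisfies $\int_e w\,ds=0$ for every $e\in\cF(\tria^\delta)$ that is either an interior face or a face contained in $\overline\Sigma$; call these the \emph{relevant} faces. These are exactly the faces on which the test functions of $Y_1$ need not vanish, and the role of the refinement $\tria^\delta_s$ is to supply, for each relevant face $e$, a vertex of $\tria^\delta_s$ interior to $e$.

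Next I would construct $Q_1^\delta$ in the standard two-step manner. Let $\Pi^\delta\colon H^1_{0,\Sigma^c}(\Omega)\to\cS^{0,1}_{\tria^\delta}\cap H^1_{0,\Sigma^c}(\Omega)\subset Y_1^\delta$ be a Scott--Zhang-type quasi-interpolation operator: a projection onto the coarse subspace that is local, $H^1$-stable, and enjoys the first-order estimate $\|v-\Pi^\delta v\|_{L_2(T)}\lesssim h_T\,|v|_{H^1(\omega_T)}$ on elements $T$, where $\omega_T$ is the element patch and $h_T$ its diameter; the dual functionals at vertices lying on $\overline{\Sigma^c}$ are chosen supported on faces contained in $\overline{\Sigma^c}$, so that $\Pi^\delta$ respects the boundary condition. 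For each relevant face $e$ let $\psi_e\in\cS^{0,1}_{\tria^\delta_s}$ be the nodal hat function at the vertex of $\tria^\delta_s$ interior to $e$; one checks that $\psi_e$ is supported in the (one or two) elements of $\tria^\delta$ meeting $e$, vanishes on every face of $\tria^\delta$ other than $e$ — in particular on $\Sigma^c$, so $\psi_e\in Y_1^\delta$ — and satisfies $\int_e\psi_e\,ds\eqsim|e|$. Defining $C^\delta w:=\sum_{e\ \mathrm{relevant}}\dfrac{\int_e w\,ds}{\int_e\psi_e\,ds}\,\psi_e$ produces the unique element of $\Span\{\psi_e\}$ whose integrals over the relevant faces equal those of $w$, and I would then set $Q_1^\delta:=\Pi^\delta+C^\delta(\identity-\Pi^\delta)$. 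For any relevant face $e$ we then get $\int_e Q_1^\delta v=\int_e\Pi^\delta v+\int_e(v-\Pi^\delta v)=\int_e v$, so the Fortin identity holds.

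It remains to bound $Q_1^\delta$ uniformly in $\delta$. One has $\|\Pi^\delta v\|_{H^1(\Omega)}\lesssim\|v\|_{H^1(\Omega)}$ from the properties of $\Pi^\delta$. For the correction, put $w:=v-\Pi^\delta v$; a scaling/inverse-estimate argument, together with the local trace inequality $\|w\|_{L_2(e)}\lesssim h_T^{-1/2}\|w\|_{L_2(T)}+h_T^{1/2}|w|_{H^1(T)}$ and the approximation and stability of $\Pi^\delta$, shows that for each relevant face $e$ the summand $\tfrac{\int_e w}{\int_e\psi_e}\psi_e$ has $H^1(\Omega)$-norm $\lesssim\|\nabla v\|_{L_2(\widetilde\omega_e)}$ on a fixed-size patch $\widetilde\omega_e$ around $e$. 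Squaring, summing over the relevant faces, and invoking the finite overlap of the patches $\widetilde\omega_e$ — which holds uniformly because the partitions are uniformly shape regular and $\tria^\delta_s$ subdivides each $T$ into a bounded number of shape-regular simplices — yields $\|C^\delta(\identity-\Pi^\delta)v\|_{H^1(\Omega)}\lesssim\|v\|_{H^1(\Omega)}$. Hence $\sup_{\delta\in\Delta}\|Q_1^\delta\|_{\cL(H^1(\Omega),H^1(\Omega))}<\infty$, and Theorem~\ref{thm:Fortin} gives the stated inf-sup bound with constant $\gtrsim\big(\sup_\delta\|Q_1^\delta\|\big)^{-1}$.

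The step I expect to be the main obstacle is the treatment near the boundary portion $\overline\Sigma$. Because the test functions in $Y_1=H^1_{0,\Sigma^c}(\Omega)$ are not required to vanish on $\Sigma$, the face moments on boundary faces $e\subset\overline\Sigma$ genuinely enter the Fortin condition and must be corrected; this is why the refinement $\tria^\delta_s$ must carry enough degrees of freedom on those faces, while at the same time the correcting functions $\psi_e$ for $e\subset\overline\Sigma$ must remain inside $H^1_{0,\Sigma^c}(\Omega)$, i.e.\ vanish on $\Sigma^c$ — which forces the hypotheses on $\tria^\delta_s$ near $\Sigma$ (and the supports of the $\psi_e$ near $\partial\Sigma$) to be set up with some care. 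A related technical point is to verify that the per-face moment conditions I aim for are compatible with the structure of $Y_1^\delta$ (no over-determination at $\Sigma$), and that the quasi-interpolator $\Pi^\delta$ can indeed be chosen to respect the boundary condition on $\Sigma^c$; both are routine but rely essentially on the shape-regularity assumptions.
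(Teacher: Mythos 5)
Your argument is essentially the paper's own proof: a Scott--Zhang quasi-interpolator preserving the homogeneous condition on $\Sigma^c$, plus a Bernardi--Raugel-type correction by fine-mesh hat functions at vertices interior to coarse faces so that the face moments of $(\identity-Q_1^\delta)v$ vanish; elementwise integration by parts (with $\triangle z|_T=0$ for $z\in X^\delta$) then gives the Fortin property, the uniform $H^1$-bound follows exactly as in the paper from the local trace inequality together with $\int_e\psi_e\,ds\gtrsim h_e^{d-1}$, $\|\psi_e\|_{H^1(\Omega)}\lesssim h_e^{d/2-1}$ and finite overlap of the patches, and Theorem~\ref{thm:Fortin} concludes.

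The one point where you deviate from the printed proof is which faces you correct, and your bookkeeping is the right one: since functions in $Y_1=H^1_{0,\Sigma^c}(\Omega)$ vanish only on $\Sigma^c$, the boundary terms surviving the integration by parts are those on interior faces and on faces $e\subset\overline{\Sigma}$, exactly your ``relevant'' faces. The paper's correction sum over $\{e\in\cF(\tria^\delta)\colon e\not\subset\overline{\Sigma}\}$ (and the matching hypothesis on interior vertices) has $\Sigma$ and $\Sigma^c$ interchanged --- indeed a hat function at a vertex interior to a face $e\subset\overline{\Sigma^c}$ would not even lie in $Y_1^\delta$, while for such faces $\int_e(\identity-J^\delta)v\,ds=0$ anyway; the later reuse of this operator for the heat equation (Proposition~\ref{prop3}, test space $H^1_0(\Omega)$, vertices interior to faces $e\not\subset\partial\Omega$) confirms the intended reading. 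The only thing you should state explicitly is the resulting caveat on the hypothesis: your correctors $\psi_e$ for $e\subset\overline{\Sigma}$ require $\tria_s^\delta$ to have a vertex interior to each such face, i.e., the assumption must be read as ``a vertex interior to each $e\in\cF(\tria^\delta)$ with $e\not\subset\overline{\Sigma^c}$''; as literally stated the proposition does not grant these vertices, so your proof (like the intended one) establishes the claim under that corrected hypothesis.
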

 
 \begin{proof}  For $d>1$, let $J^\delta$ denote a Scott-Zhang quasi-interpolator (cf. \cite{247.2}) mapping into $X^\delta \cap H^1_{0,\Sigma^c}(\Omega) \subset Y_1^\delta$,
 i.e., one that preserves homogeneous boundary conditions on $\Sigma^c$. Using the technique applied in \cite{20.05}, for $v \in H^1_{0,\Sigma^c}(\Omega)$ we define
 $$
  v^\delta=Q_{1}^\delta v:=J^\delta v+\sum_{\{e \in \cF(\tria^\delta)\colon e \not\subset \overline{\Sigma}\}} \frac{\int_e(\identity-J^\delta)v\,ds}{\int_e\phi_e ds} \phi_e \,\, \in Y_1^\delta,
$$
 where $\phi_e \in Y_1^\delta$ is such that $\phi_e$ has values in $[0,1]$, $\phi_e$ is $1$ at a vertex of $\tria_s^\delta$ interior to $e$, and $\phi_e$ vanishes outside $\cup \omega_e$, where $\omega_e:=\{T \in \tria^\delta \colon e \subset \partial T\}$.
 Then for each $e \in \cF(\tria^\delta)$, it holds that $\int_e v-v^\delta\,ds=0$, and so for $z \in X^\delta$,
 $$
(B_1 z)(v-v^\delta)= 
 \sum_{T \in \tria^\delta} \big\{-\int_T\triangle z (v-v^\delta)\,dx+ \int_{\partial T} \tfrac{\partial z}{\partial n} (v-v^\delta)\,ds\big\}=0.
 $$
 So $Q_{1}^\delta$ is a Fortin interpolator into $Y_1^\delta$. In view of Theorem~\ref{thm:Fortin} we need to show $\sup_{\delta \in \Delta} \|Q_{1}^\delta\|_{\cL(H^1_{0,\Sigma^c}(\Omega),H^1_{0,\Sigma^c}(\Omega))}<\infty$. 
 
 By an application of the trace theorem and a homogeneity argument, the construction of a Scott-Zhang quasi-interpolator shows that for arbitrary $T_e \in \tria^\delta$ with $T_e \subset \omega_e$, with $h_e:=\diam(e)$ it holds that
 $$
\|(\identity-J^\delta)v\|_{L_2(e)} \lesssim h_e^{-\frac12}\|(\identity-J^\delta)v\|_{L_2(T_e)}+h_e^{\frac12}\|(\identity-J^\delta)v\|_{H^1(T_e)}
\lesssim h_e^{\frac12} |v|_{H^1(\omega_{T_e})},
 $$
 where $\omega_{T}:=\cup\{T' \in \tria^\delta\colon |T  \cap T'|>0\}$.
 From $\int_e\phi_e ds \gtrsim h_e^{d-1}$, 
 $\|\phi_e\|_{H^1(\Omega)} \lesssim h_e^{\frac{d}{2}-1}$, and $|e|^{\frac12} \lesssim h_e^{\frac{d-1}{2}}$, we conclude
 \begin{align*}
 \|Q_{1}^\delta v\|_{H^1(\Omega)} &\leq \|J^\delta v\|_{H^1(\Omega)} +\sum_{\{e \in \cF(\tria^\delta)\colon e \not\subset \overline{\Sigma}\}} \frac{|\int_e(\identity-J^\delta)v\,ds|}{|\int_e\phi_e ds|} \|\phi_e\|_{H^1(\Omega)}\\
& \lesssim  \| v\|_{H^1(\Omega)}+\sum_{\{e \in \cF(\tria^\delta)\colon e \not\subset \overline{\Sigma}\}} h_e^{1-d}h_e^{\frac{d}{2}-1} h_e^{\frac{d-1}{2}}h_e^{\frac12} |v|_{H^1(\omega_{T_e})} \lesssim  \| v\|_{H^1(\Omega)}.
 \end{align*}
 \end{proof}

\begin{proposition} \label{prop2}
For $\delta \in \Delta$, let $\cE_s^\delta$ be a red-refinement of the partition $\cE^\delta:=\{e \in \cF(\tria^\delta)\colon e \subset \overline{\Sigma}\}$
of $\Sigma$, i.e., 
in $\cE_s^\delta$ each $(d-1)$-simplex $e \in \cE^\delta$ has been split  into $2^{d-1}$ subsimplices that are similar to $e$.
Then with
$$
 Y_2^\delta=\cS^{-1,0}_{\cE_s^\delta}:=\{v \in L_2(\Sigma)\colon v|_{\tilde e} \in \cP_0(\tilde e) \,(\tilde e \in \cE_s^\delta)\},
 $$
 it holds that
 $$
 \inf_{\delta \in \Delta} \inf_{\{z \in X^\delta\colon \gamma_\Sigma z\neq 0\}}\sup_{\{0 \neq v \in Y_2^\delta\}}
 \frac{|\int_\Sigma  z  v \,ds|}{\|\gamma_\Sigma z\|_{H^{\frac12}(\Sigma)}\|v\|_{\widetilde{H}^{-\frac12}(\Sigma)}} >0.
 $$
 \end{proposition}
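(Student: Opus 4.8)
The plan is to apply Theorem~\ref{thm:Fortin} with $B=\gamma_\Sigma$, the trial space $X^\delta$, the test space $Y^\delta=Y_2^\delta=\cS^{-1,0}_{\cE_s^\delta}$, and $Y=\widetilde H^{-\frac12}(\Sigma)$, so that $Y'=H^{\frac12}(\Sigma)$. It then suffices to construct a family of \emph{uniformly} bounded Fortin operators $Q_2^\delta\in\cL(\widetilde H^{-\frac12}(\Sigma),\cS^{-1,0}_{\cE_s^\delta})$ with $\int_\Sigma w\,(\identity-Q_2^\delta)v\,ds=0$ for all $w\in\cS^{0,1}_{\cE^\delta}(\Sigma)$ and $v\in\widetilde H^{-\frac12}(\Sigma)$; here $\cS^{0,1}_{\cE^\delta}(\Sigma)$ is the space of continuous piecewise linears on the surface triangulation $\cE^\delta$, and since $\overline\Sigma$ is a union of faces of $\tria^\delta$ one has $\gamma_\Sigma X^\delta\subset\cS^{0,1}_{\cE^\delta}(\Sigma)$, so killing $\cS^{0,1}_{\cE^\delta}(\Sigma)$ is enough.

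The most direct construction — defining $Q_2^\delta v$ element-by-element over $\cE^\delta$ by matching, on each $e$, all moments of $v|_e$ against $\cP_1(e)$ with a function constant on the sub-simplices of $e$ — does produce a Fortin operator, but it maps into the \emph{discontinuous} space $\cS^{-1,0}_{\cE_s^\delta}$ and its $L_2(\Sigma)$-adjoint maps into discontinuous piecewise linears on $\cE^\delta$, which obstructs the usual duality route to $\widetilde H^{-\frac12}(\Sigma)$-stability. Instead I would take $Q_2^\delta:=(P^\delta)^*$, the $L_2(\Sigma)$-adjoint of a quasi-interpolator $P^\delta$ onto the \emph{continuous} coarse space $\cS^{0,1}_{\cE^\delta}(\Sigma)$ with three properties: (i) $P^\delta$ reproduces $\cS^{0,1}_{\cE^\delta}(\Sigma)$; (ii) $P^\delta=\widehat P^\delta\circ\Pi_s^\delta$, where $\Pi_s^\delta$ is the $L_2(\Sigma)$-orthogonal projection onto $\cS^{-1,0}_{\cE_s^\delta}$ and $\widehat P^\delta\colon\cS^{-1,0}_{\cE_s^\delta}\to\cS^{0,1}_{\cE^\delta}(\Sigma)$ is a \emph{local} left-inverse of $\Pi_s^\delta|_{\cS^{0,1}_{\cE^\delta}(\Sigma)}$; and (iii) $P^\delta$ is bounded on $L_2(\Sigma)$ and on $H^1(\Sigma)$, uniformly in $\delta$. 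Granting these, (ii) forces $\ran Q_2^\delta=\ran((P^\delta)^*)\subset\ran\Pi_s^\delta=\cS^{-1,0}_{\cE_s^\delta}$; (i) gives the Fortin identity, since $\langle v-Q_2^\delta v,w\rangle=\langle v,\,w-P^\delta w\rangle=0$ for $w\in\cS^{0,1}_{\cE^\delta}(\Sigma)$; and interpolating (iii) makes $P^\delta$ uniformly bounded on $H^{\frac12}(\Sigma)=[L_2(\Sigma),H^1(\Sigma)]_{\frac12}$, whence by duality $\|Q_2^\delta\|_{\cL(\widetilde H^{-\frac12}(\Sigma),\widetilde H^{-\frac12}(\Sigma))}\le\|P^\delta\|_{\cL(H^{\frac12}(\Sigma),H^{\frac12}(\Sigma))}\lesssim1$. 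Theorem~\ref{thm:Fortin} then yields the asserted positive inf-sup.

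The hard part will be the construction of $\widehat P^\delta$ satisfying (ii), and this is exactly where the hypothesis that $\cE_s^\delta$ is a red-refinement of $\cE^\delta$ is used. For each vertex $p$ of $\cE^\delta$ I want a bounded linear functional $\lambda_p$ acting only on the values of its argument on the sub-simplices of one fixed $e\in\cE^\delta$ with $p\in e$, such that $\lambda_p(\Pi_s^\delta w)=w(p)$ for every $w\in\cS^{0,1}_{\cE^\delta}(\Sigma)$. Such $\lambda_p$ exists because the $L_2(e)$-averages over the $2^{d-1}$ sub-simplices of the red-refinement of $e$ determine any linear polynomial on $e$ (a suitable subset of $d$ of their barycenters being affinely independent), with constants controlled by shape-regularity — in the absence of refinement this can fail, e.g.\ the averaging map on an even cycle of edges has a nontrivial kernel. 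Setting $(\widehat P^\delta\zeta)(p):=\lambda_p(\zeta)$ defines a local left-inverse of $\Pi_s^\delta|_{\cS^{0,1}_{\cE^\delta}(\Sigma)}$ that in addition reproduces constants, so $P^\delta=\widehat P^\delta\circ\Pi_s^\delta$ is a local, constant-reproducing quasi-interpolator onto $\cS^{0,1}_{\cE^\delta}(\Sigma)$; property (iii) then follows by the familiar Cl\'ement/Scott--Zhang reasoning (finite overlap of the local patches, a local Poincar\'e inequality, and an inverse inequality on $\cS^{0,1}_{\cE^\delta}(\Sigma)$).

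What remains is routine: checking that the scalings entering $\lambda_p$ and the stability estimates are uniform under the uniform shape-regularity assumption, that $\overline\Sigma$ being meshed yields $\gamma_\Sigma X^\delta\subset\cS^{0,1}_{\cE^\delta}(\Sigma)$, and that the Fortin identity — first established for $v\in L_2(\Sigma)$ via the $L_2(\Sigma)$-pairing — extends to all $v\in\widetilde H^{-\frac12}(\Sigma)$ by density once $Q_2^\delta$ is known to be $\widetilde H^{-\frac12}(\Sigma)$-bounded.
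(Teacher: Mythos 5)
Your proposal is correct and follows essentially the same route as the paper: a Fortin operator into $\cS^{-1,0}_{\cE_s^\delta}$ whose $L_2(\Sigma)$-adjoint is a locally supported, constant-reproducing quasi-interpolator onto $\cS^{0,1}_{\cE^\delta}$ that is uniformly $L_2(\Sigma)$- and $H^1(\Sigma)$-stable, hence $H^{\frac12}(\Sigma)$-stable by interpolation, so that duality gives the uniform $\widetilde{H}^{-\frac12}(\Sigma)$-bound needed for Theorem~\ref{thm:Fortin}. The only difference is presentational: your functionals $\lambda_p\circ\Pi_s^\delta$ (recovering nodal values from sub-simplex averages, using that the corner barycenters of the red-refinement are affinely independent) are an explicit construction of exactly the locally supported biorthogonal system $\Psi^\delta\subset\cS^{-1,0}_{\cE_s^\delta}$ with $\langle\phi_q,\tilde\psi_p\rangle_{L_2(\Sigma)}=\delta_{pq}$ that the paper imports from \cite{249.975}.
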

 
 \begin{proof} Although formulated differently, a proof of this statement can be found in \cite[Thm.~4.1 \& Lem.~5.6]{249.975}. We summarize the main steps.
 With $\Phi^\delta=\{\phi_\nu\}$ denoting the nodal basis for $\cS^{0,1}_{\cE^\delta}$, there exists a collection $\Psi^\delta=\{\psi_\nu\} \subset  Y_2^\delta$
 for which $\langle \phi_\nu,\psi_{\nu'}\rangle_{L_2(\Sigma)} \eqsim \delta_{\nu \nu'} \|\phi_\nu\|_{L_2(\Sigma)} \|\psi_{\nu'}\|_{L_2(\Sigma)}$ and
 $\supp \psi_{\nu} \subseteq \supp \phi_{\nu}$. The resulting biorthogonal Fortin projector $Q_2^\delta$ with $\ran Q_2^\delta = \Span \Psi^\delta$ and $\ran(\identity -Q_2^\delta)=(\cS^{0,1}_{\cE^\delta})^{\perp_{L_2(\Sigma)}}$ is uniformly bounded w.r.t.~$L_2(\Sigma)$, and, thanks to the approximation properties of $\cS^{0,1}_{\cE^\delta}$, its adjoint is uniformly bounded w.r.t.~$H^1(\Sigma)$, and therefore also w.r.t.~$H^{\frac12}(\Sigma)$. We conclude that $\sup_{\delta \in \Delta} \|Q_2^\delta\|_{\cL(\widetilde{H}^{-\frac12}(\Sigma),\widetilde{H}^{-\frac12}(\Sigma))}<\infty$ as required.
 \end{proof}
 
 \begin{remark} The proof of the above proposition hinges on the construction of a (uniformly locally supported) basis dual to $\Phi^\delta$ from the span of piecewise constants w.r.t.~some refined partition. Such a construction is not restricted to red-refinement, and obviously it applies 
 when a deeper than red-refinement is applied as with the application of the bisect(5) refinement rule for two-dimensional $\Sigma$ (see e.g., \cite[Rem.~1]{242.8155}).
 \end{remark}
 
Next, in order to avoid the evaluation of the $\widetilde{H}^{-\frac12}(\Sigma)$-norm of arguments from $Y_2^\delta$, let $G_2^\delta= {G_2^\delta}' \in \Lis({Y_2^\delta}',Y_2^\delta)$ be such that $\|G_2^\delta f \|_{\widetilde{H}^{-\frac12}(\Sigma)}^2 \eqsim f(G_2^\delta f)$ ($f \in {Y_2^\delta}'$), and whose application can be performed in linear complexity (examples in \cite{75.258,249.985}). 
We equip $Y_2^\delta$ with $((G_2^\delta)^{-1}\cdot)(\cdot))^{\frac12}\eqsim \|\cdot\|_{\widetilde{H}^{-\frac12}(\Sigma)}$.

In conclusion we have that the least squares approximation $u_\eps^\delta$ from \eqref{13} of the solution of \eqref{Cauchy} is given as the minimizer over $z\in X^\delta$ of
\be \label{20}
\sup_{0 \neq (v_1,v_2) \in Y_1^\delta\times Y_2^\delta} \hspace*{-1.5em} \frac{|\int_\Omega \nabla z\cdot\nabla v_1\,dx+\int_\Sigma \gamma_\Sigma z v_2\,ds-\big(f_I(v_{1})+\int_\Sigma f_N v_1+f_D v_2\,ds\big)
|^2}{\|v_1\|_{H^1(\Omega)}^2+ ((G_2^\delta)^{-1}v_2)(v_2)}
+\eps^2\| z\|_H^2,
\ee
where $H=L_2(\Omega)$ or $H=H^1(\Omega)$ for Case~\eqref{i} or Case~\eqref{ii},
of Example \ref{ex1} in Section \ref{Sappls}, respectively.
According to \eqref{14} this $u_\eps^\delta$ can be computed as the first component of the solution $(u_\eps^\delta,v_{\eps,1}^\delta,v_{\eps,2}^\delta) \in X^\delta \times Y_1^\delta \times Y_2^\delta $ of
\begin{align*}
\int_\Omega \nabla \tilde{z} \cdot \nabla v_{\eps,1}^\delta\,dx+\int_\Sigma \gamma_\Sigma \tilde{z} v_{\eps,2}^\delta \,ds+
\int_\Omega \nabla u_\eps^\delta \cdot \nabla \tilde{v}_1\,dx+\int_\Sigma \gamma_\Sigma u_\eps^\delta \tilde{v}_2 \,ds+\langle \tilde{v}_1,v_{\eps,1}^\delta\rangle_{H^1(\Omega)}\\
+((G_2^\delta)^{-1} \tilde{v}_2)(v_{\eps,2})
-\eps^2\langle \tilde{z},u_\eps^\delta\rangle_H=f_I(\tilde{v}_1)+\int_{\Sigma} f_N \tilde{v}_1+ f_D \tilde{v}_2 \,ds
\end{align*}
for all $(\tilde{z},\tilde{v}_1,\tilde{v}_2) \in X^\delta \times Y_1^\delta \times Y_2^\delta$, which after elimination of $v_{\eps,2}^\delta$ reads as finding
$(u_\eps^\delta,v_{\eps,1}^\delta) \in X^\delta \times Y_1^\delta$
that satisfies
\be \label{21}
 \begin{split}
 \int_\Omega \nabla \tilde{z}\cdot \nabla v_{\eps,1}^\delta\,dx+ \int_\Omega& \nabla u_\eps^\delta \cdot \nabla \tilde{v}_1\,dx+\langle \tilde{v}_1, v_{\eps,1}^\delta \rangle_{H^1(\Omega)}
 -\int_\Sigma \gamma_\Sigma u_\eps^\delta G_2^\delta  \gamma_\Sigma \tilde{z}\,ds-
 \eps^2\langle u_\eps^\delta,\tilde{z}\rangle_H\\
 &=
 f_I(\tilde{v}_1)+\int_\Sigma f_N \tilde{v}_1- f_D G_2^\delta \gamma_\Sigma \tilde{z} \,ds \qquad ((\tilde{z},\tilde{v}_1) \in X^\delta \times Y_1^\delta).
 \end{split}
 \ee
 
For this $u_\eps^\delta$ the bound on $\nrm u-u_\eps^\delta\nrm_\eps$ from Theorem~\ref{thm:1} applies, and, with the specification of $j$ and $\eta$ according to the estimates in Example~\ref{ex1} \eqref{i} or \eqref{ii}, so does Theorem~\ref{thm:0}.

 \begin{remark} For completeness, we recall that in order to enhance the efficiency of an iterative solution process, additionally in \eqref{20} one may replace $\|v_1\|_{H^1(\Omega)}^2$ by an equivalent expression $((G_1^\delta)^{-1}v_1)(v_1)$, where $G_1^\delta= {G_1^\delta}' \in \Lis({Y_1^\delta}',Y_1^\delta)$ is such that $\|G_1^\delta f \|_{H^1(\Omega)}^2 \eqsim f(G_1^\delta f)$ ($f \in {Y_1^\delta}'$), and whose application can be performed in linear complexity (e.g., a multigrid preconditioner).
With this change, $\langle \tilde{v}_1,v_{\eps,1}^\delta\rangle_{H^1(\Omega)}$ in \eqref{21} reads as 
$((G_1^\delta)^{-1} \tilde{v}_1)(v_{\eps,1})$, and the unknown $v_{\eps,1}^\delta$ can be eliminated which results in the symmetric positive definite system of finding
 $u_\eps^\delta \in X^\delta$ that satisfies
 \begin{align*}
 (B_1  u_\eps^\delta)(G_1^\delta B_1  \tilde{z})&+\int_\Sigma \gamma_\Sigma u_\eps^\delta G_2^\delta  \gamma_\Sigma \tilde{z}\,ds
 + \eps^2\langle u_\eps^\delta,\tilde{z}\rangle_H \\
 &=
  f_I(G_1^\delta B_1\tilde{z})+\int_\Sigma f_N G_1^\delta B_1 \tilde{z}+f_D G_2^\delta  \gamma_\Sigma \tilde{z} \,ds \quad(\tilde z \in X^\delta),
\end{align*}
cf.~\eqref{18}.
 \end{remark}

\begin{remark}[Other approaches] In \cite{35.929}, the Dirichlet boundary condition was enforced by the so-called Nitsche method, which avoids the treatment of fractional Sobolev norms. 
Under the additional regularity condition that $u \in H^2(\Omega)$, an error estimate was derived based on the conditional stability estimates \eqref{i} or \eqref{ii}. 

In \cite{23.5}, $B_2=\gamma_\Sigma$ was viewed as a map in $\cL(H^1(\Omega),L_2(\Sigma))$, i.e., the 
discrepancy between the Dirichlet data and the trace of the approximate solution was measured in the weaker $L_2(\Sigma)$-norm.
For data such that the Cauchy problem has a (unique) solution, the solution of the resulting regularized least squares problem with $X^\delta=H^1(\Omega)$ (so without discretization) was shown to converge to the exact solution for $\eps \rightarrow 0$.
\end{remark}

\subsection{Verification of \eqref{inf-sup} for the data-assimilation problem for the heat equation (Example~\ref{ex2})} \label{SverificationHeat}
Let $\Omega \subset \R^d$ be a polytope, and recall that $A=(B,\Gamma_{I \times \omega}) \in \cL(X,V)$,
where $I=(0,T)$, $X=L_2(I;H^1(\Omega)) \cap H^1(I;H^{-1}(\Omega))$ (Case \eqref{hi}) or $X=L_2(I;H_0^1(\Omega)) \cap H^1(I;H^{-1}(\Omega))$ (Case \eqref{hii}),
$V=Y' \times L_2(I \times \omega)$, $Y=L_2(I;H_0^1(\Omega))$, $\Gamma_{I \times \omega}u=u|_{I \times \omega}$, and
$$
(Bu)(v)=\int_I \int_\Omega \partial_t u \,v +\nabla_x u \cdot \nabla_x v\,dx\,dt.
$$

\begin{proposition} \label{prop3}
Let $(\tria^\delta)_{\delta \in \Delta}$ be a family of conforming, uniformly shape regular partitions of $\overline{\Omega}$ into $d$-simplices.
For each $\delta \in \Delta$, let $\tria_s^\delta$ be a refinement of $\tria^\delta$ such that each $T \in \tria^\delta$ is subdivided into a uniformly bounded number of uniformly shape regular $d$-simplices and 
\begin{enumerate}
\item \label{first} when $d>1$, $\tria_s^\delta$ has a vertex interior to each $e \in \cF(\tria^\delta)$ with $e \not\subset \partial \Omega$,
\item \label{second} ${\displaystyle \inf_{\delta \in \Delta,\,T \in \tria^\delta} \inf_{0 \neq p \in {\mathcal P}_1(T)}\sup_{\{0 \neq \tilde p \in  H^1_0(T)\colon \tilde{p}|_{\tilde{T}} \in {\mathcal P}_1(\tilde{T}) (\tilde{T} \in \tria_s^\delta,\, \tilde{T} \subset T)\}}} \frac{\langle p, \tilde{p}\rangle_{L_2(T)}}{\|p\|_{L_2(T)}\|\tilde p\|_{L_2(T)}}>0$,
\end{enumerate}
or $\tria_s^\delta$ is a further refinement of  such a  partition (cf. footnote \ref{firstfootnote}).
With $(I^\delta)_{\delta \in \Delta}$ being a partition of $\overline{I}$, let
$$
X^\delta:=X \cap  (\cS_{I^\delta}^{0,1} \otimes \cS_{\tria^\delta}^{0,1}),\quad
Y^\delta:=\cS_{I^\delta}^{-1,1} \otimes (\cS_{\tria_s^\delta}^{0,1} \cap H^1_0(\Omega)).
$$
Then
$$
 \inf_{\delta \in \Delta} \inf_{\{z \in X^\delta\colon B z\neq 0\}}\sup_{\{0 \neq v \in Y^\delta\}}
 \frac{|(B z)(v)|}{\|B z\|_{Y'}\|v\|_{Y}} >0,
 $$
 i.e., \eqref{inf-sup} is valid.
\end{proposition}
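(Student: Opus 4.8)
The plan is to exhibit a family of uniformly bounded Fortin operators $Q^\delta \in \cL(Y,Y^\delta)$ with $(B X^\delta)\big((\identity - Q^\delta)Y\big) = \{0\}$; by Theorem~\ref{thm:Fortin} this yields \eqref{inf-sup}. I would exploit the tensor-product structure $Y = L_2(I) \otimes H^1_0(\Omega)$, $Y^\delta = \cS_{I^\delta}^{-1,1} \otimes \big(\cS_{\tria_s^\delta}^{0,1} \cap H^1_0(\Omega)\big)$ and take $Q^\delta := Q_t \otimes Q_x$, where $Q_t \colon L_2(I) \to \cS_{I^\delta}^{-1,1}$ is the $L_2(I)$-orthogonal projection (hence $\|Q_t\|_{\cL(L_2(I),L_2(I))} = 1$) and $Q_x \colon H^1_0(\Omega) \to \cS_{\tria_s^\delta}^{0,1}\cap H^1_0(\Omega)$ is a spatial Fortin-type operator to be built below. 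Since for Hilbert tensor products $\|Q_t \otimes Q_x\|_{\cL(Y,Y)} \le \|Q_t\|_{\cL(L_2(I),L_2(I))}\,\|Q_x\|_{\cL(H^1_0(\Omega),H^1_0(\Omega))}$, uniform boundedness of $Q^\delta$ reduces to that of $Q_x$.

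To check the Fortin identity, I would split $\identity - Q^\delta = (\identity - Q_t)\otimes\identity + Q_t\otimes(\identity - Q_x)$ and test $(Bz)(\cdot) = \int_I\int_\Omega \partial_t z\,(\cdot) + \nabla_x z \cdot \nabla_x(\cdot)\,dx\,dt$ against each part for $z \in X^\delta$. For the first part one uses that, at each $x \in \Omega$, $\partial_t z(\cdot,x) \in \cS_{I^\delta}^{-1,0}$ and every component of $\nabla_x z(\cdot,x)$ lies in $\cS_{I^\delta}^{0,1}$, both inside $\cS_{I^\delta}^{-1,1} = \ran Q_t$; as $Q_t$ commutes with $\nabla_x$ and is self-adjoint, Fubini gives $\int_I\int_\Omega \partial_t z\,[(\identity - Q_t)\otimes\identity]v + \nabla_x z\cdot\nabla_x[(\identity - Q_t)\otimes\identity]v\,dx\,dt = 0$ for all $v \in Y$, irrespective of $Q_x$. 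For the second part, freezing $t$ and using $\partial_t z(t,\cdot) \in \cS_{\tria^\delta}^{0,1}$ and $\nabla_x z(t,\cdot) = \nabla_x p_t$ with $p_t \in \cS_{\tria^\delta}^{0,1}$, the contribution vanishes as soon as $Q_x$ satisfies the two spatial Fortin conditions
\be \label{eq:F1F2}
\langle p,\, w - Q_x w\rangle_{L_2(\Omega)} = 0 \quad\text{and}\quad \langle \nabla_x p,\, \nabla_x(w - Q_x w)\rangle_{L_2(\Omega)} = 0
\ee
for all $p \in \cS_{\tria^\delta}^{0,1}$ and all $w \in H^1_0(\Omega)$. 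The case $X := L_2(I;H^1_0(\Omega)) \cap H^1(I;H^{-1}(\Omega))$ only makes \eqref{eq:F1F2} weaker, so the same $Q_x$ would cover both settings of Example~\ref{ex2}.

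For the construction of $Q_x$ I would follow the scheme of Proposition~\ref{prop1}, augmented by an interior-bubble correction realizing the first identity in \eqref{eq:F1F2}. Starting from a Scott--Zhang quasi-interpolator $J^\delta$ onto $\cS_{\tria^\delta}^{0,1}\cap H^1_0(\Omega)$, one first adds corrections $\sum_e c_e(v)\,\phi_e$, with $\phi_e \in \cS_{\tria_s^\delta}^{0,1}\cap H^1_0(\Omega)$ supported near the interior faces $e \in \cF(\tria^\delta)$ with $e \not\subset \partial\Omega$ and $c_e(v)$ chosen so that $\int_e (v - \cdot)\,ds = 0$ on each such $e$ --- this is possible by hypothesis \eqref{first} exactly as in Proposition~\ref{prop1} and \cite{20.05}, and elementwise integration by parts (using $\triangle_x p = 0$ on each $T$ and the facewise constancy of $\partial p/\partial n$) then gives the second identity in \eqref{eq:F1F2}. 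Secondly, on each $T \in \tria^\delta$ one adds a correction $r_T \in \cS_{\tria_s^\delta}^{0,1}\cap H^1_0(T)$ (extended by zero), chosen via the uniform local inf-sup hypothesis \eqref{second} so that $\int_T q\,(v - Q_x v)\,dx = 0$ for all $q \in \cP_1(T)$; summing over $T$ this yields the first identity in \eqref{eq:F1F2}. Crucially, each $r_T$ vanishes on $\partial T$, so it is invisible to the face integrals and the two corrections decouple. For the $H^1$-bound one combines the $H^1$-stability of $J^\delta$ and of the face corrections (the estimate in Proposition~\ref{prop1}) with the bound $\|r_T\|_{H^1(T)} \lesssim h_T^{-1}\|v - J^\delta v - \sum_e c_e\phi_e\|_{L_2(T)} \lesssim \|v\|_{H^1(\omega_T)}$, coming from \eqref{second}, an inverse estimate, and local $L_2$-stability of the preceding two steps; summation over $T$ gives $\|Q_x\|_{\cL(H^1_0(\Omega),H^1_0(\Omega))} \lesssim 1$.

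I expect the main obstacle to be the simultaneous enforcement of the two Fortin conditions in \eqref{eq:F1F2} without loss of $H^1$-stability: the $H^1$-type condition dictates face-integral matching as in Proposition~\ref{prop1}, while the $L_2$-type condition additionally requires control of all $\cP_1(T)$-moments. The resolution is the use of genuinely \emph{interior} bubble corrections --- which do not perturb the face integrals --- together with the local inf-sup assumption \eqref{second}; checking \eqref{second} for concrete refinements $\tria_s^\delta$ (barycentric refinement, repeated red refinement, or bisection-based rules) is routine but is exactly what fixes the required refinement depth.
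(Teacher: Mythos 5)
Your proposal is correct and follows essentially the same route as the paper: a tensor-product Fortin operator $Q_t^\delta\otimes Q_x^\delta$ with $Q_t^\delta$ the $L_2(I)$-orthogonal projection onto $\cS_{I^\delta}^{-1,1}$, and a spatial operator built from the Scott--Zhang-plus-face-correction operator of Proposition~\ref{prop1} augmented by element-interior bubble corrections whose existence and $L_2$-stability come from hypothesis~\eqref{second}, followed by the same splitting $(\identity-Q_t^\delta)\otimes\identity+Q_t^\delta\otimes(\identity-Q_x^\delta)$, elementwise integration by parts, and inverse-estimate-based $H^1$-stability. The only cosmetic difference is that you construct the interior corrections $r_T$ locally by hand, whereas the paper packages them as a global skeleton-vanishing projector $Q_{2,x}^\delta$ (via Theorem~\ref{thm:Fortin}) and forms $Q_x^\delta=Q_{1,x}^\delta+Q_{2,x}^\delta(\identity-Q_{1,x}^\delta)$, which is the same operator family.
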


\begin{remark}
When $\tria_s^\delta$ is generated from $\tria^\delta$ by recursive  either red-refinement or newest vertex bisection, for $d \in \{1,2,3\}$ condition \eqref{second} is satisfied as soon as the refinement is sufficiently deep such that $\tria_s^\delta$ contains a (closed) $d$-simplex in the interior of each $T \in \tria^\delta$ as can be verified by a direct computation on a reference $d$-simplex (see \cite[\S5.1]{58.6}).
\end{remark}

\begin{proof} It suffices to prove the statement for $X$ that corresponds to Case \eqref{hi}, so that $X^\delta:=\cS_{I^\delta}^{0,1} \otimes \cS_{\tria^\delta}^{0,1}$.

Let $Q_{1,x}^\delta\colon H^1_0(\Omega) \rightarrow \cS_{\tria_s^\delta}^{0,1} \cap H^1_0(\Omega)$ be the Fortin operator $Q_1$ introduced in the proof of Proposition~\ref{prop1} (for $\Sigma=\partial\Omega$). It has the properties
\begin{align*}
&\sup_{\delta \in \Delta} \|Q_{1,x}^\delta\|_{\cL(H^1_0(\Omega),H^1_0(\Omega))}<\infty,\,\,
\int_e (\identity -Q_{1,x}^\delta)v\,ds=0 \,\,(v\in H^1_0(\Omega),\,e \in \cF(\tria^\delta)),\\
&\|(\identity -Q_{1,x}^\delta)v\|_{L_2(T)} \lesssim \diam(T) |v|_{H^1(\omega_T)} \,\,(v\in H^1_0(\Omega),\,T \in \tria^\delta),
\end{align*}
which also implies $\|(\identity -{Q_{1,x}^\delta}')g\|_{H^{-1}(\Omega)} \lesssim \sum_{T \in \tria^\delta} \diam(T)^2 \|g\|_{L_2(T)}^2$ ($g \in L_2(\Omega)$).

Thanks to \eqref{second}, Theorem~\ref{thm:Fortin} shows that there exists a projector $Q_{2,x}^\delta\colon L_2(\Omega) \rightarrow \{v\in \cS^{0,1}_{\tria_s^\delta}\colon v|_{\partial\tria^\delta}=0\}$, where, for $T \in \tria^\delta$, $(Q_{2,x}^\delta v)|_T$ only depends on $v|_T$, with
$$
\sup_{\delta \in \Delta} \|Q_{2,x}^\delta\|_{\cL(L_2(\Omega),L_2(\Omega))}<\infty,\quad
\ran (\identity-Q_{2,x}^\delta) \perp_{L_2(\Omega)} \cS^{-1,1}_{\tria^\delta},
$$
so that ${Q_{2,x}^\delta}'$ reproduces $\cS^{-1,1}_{\tria^\delta}$, and so
$$
\|(\identity -{Q_{2,x}^\delta}')g\|_{L_2(T)} \lesssim \diam(T) |g|_{H^1(T)} \,\,(g \in H^1(\Omega),\,T \in \tria^\delta)
$$

With $Q_{x}^\delta:=Q_{1,x}^\delta+Q_{2,x}^\delta-Q_{2,x}^\delta Q_{1,x}^\delta\colon H^1_0(\Omega) \rightarrow \cS_{\tria_s^\delta}^{0,1} \cap H^1_0(\Omega)$, and $Q_t^\delta$ being the $L_2(I)$-orthogonal projector onto $\cS_{I^\delta}^{-1,1}$, we take $Q^\delta:=Q_t^\delta \otimes Q_x^\delta$.
From
$$
\|Q_{2,x}(\identity-Q_{1,x}) v\|_{H^1(T)} \lesssim \diam(T)^{-1} \|(\identity-Q_{1,x}) v\|_{H^1(T)} \lesssim 
|v|_{H^1(\omega_T)} \,\, (T \in \tria^\delta),
$$
one infers that $\sup_{\delta \in \Delta} \|Q^\delta\|_{\cL(Y,Y)} <\infty$.

For $z \in X^\delta$ and $v \in Y$, by writing $(B z)((\identity -Q^\delta)v)=(B z)((\identity - Q_t^\delta) \otimes \identity\,v)+(B z)(Q_t^\delta \otimes (\identity - Q_x^\delta)v)$, and by realizing that the second term equals
\begin{align*}
\sum_{J \in I^\delta} \int_J \sum_{T \in \tria^\delta} \Big\{
&\int_T
\partial_t z\, Q_t^\delta \otimes (\identity -Q_{2,x}^\delta) (\identity -Q_{1,x}^\delta) v
 \,dx
+\\
&\int_{\partial T} \frac{\partial z}{\partial n} Q_t^\delta \otimes (\identity -Q_{1,x}^\delta+Q_{2,x}^\delta Q_{1,x}^\delta ) v
 \,ds
\Big\} dt
\end{align*}
one infers that both terms vanish, so that $Q^\delta$ is a valid Fortin operator.
\end{proof}

\begin{remark} For Case~\eqref{hii} and assuming $\Omega$ being convex, in \cite[Thm.~5.7]{58.6} the statement of Proposition~\ref{prop3} was proven without Condition~\eqref{second}, which upon assuming Condition~\eqref{first}, is, however, harmless.\end{remark}

\begin{remark}[Different meshes in different time-slabs] \label{time-slab}
The result of Proposition~\ref{prop3} directly extends to the situation that
\begin{align*}
X^\delta&:=\{z \in X\colon z|_{J \times T} \in {\mathcal P}_1(J)\otimes {\mathcal P}_1(T)\,(J \in I^\delta,\,T \in \tria^\delta(J))\},\\
Y^\delta&:=\{v \in Y\colon v|_{J \times \tilde{T}} \in {\mathcal P}_1(J)\otimes {\mathcal P}_1(\tilde T)\,\,(J \in I^\delta,\,\tilde T \in \tria_{s}^\delta(J))\},
\end{align*}
i.e., when the spatial meshes underlying $X^\delta$ and $Y^\delta$ possibly depend on the time interval $J \in I^\delta$.
Notice that the global continuity of functions in $X^\delta$ implies that nonconformities between spatial meshes in adjacent time-slabs result in `hanging nodes'. 
\end{remark}

\begin{remark}To end up with really general partitions of the time-space cilinder into prismatic elements,
it would be necessary to allow elements $J \times T$ and $\widehat{J} \times \widehat{T}$ with $T \cap \widehat{T}$ being a $(d-1)$-simplex and
$J \subsetneq \widehat{J}$, i.e., to allow hanging nodes on interfaces perpendicular to the plane $\{0\} \times \Omega$.
Uniform boundedness in $\cL(Y,Y)$ of Fortin interpolators, needed to prove inf-sup stability, requires essentially that they map functions that are constant in  $x$ to functions that are constant in $x$. For such general partitions, unfortunately we do not see how this can be done.

The kind of partitions for which we are able to prove inf-sup stability do not allow local (adaptive) refinements. To circumvent this problem, in the next section we consider a reformulation of the data assimilation problem for the heat equation as a first order system.
\end{remark}

Concluding, we have that the least squares approximation $u_\eps^\delta$ from \eqref{13} of the solution  of the data-assimilation problem for the heat equation is given as the minimizer 
$$
u_\eps^\delta:=\argmin_{z \in X^\delta} \Big\{ \sup_{0 \neq v \in Y^\delta} \frac{|(Bz-g)(v)|^2}{\|v\|_{L_2(I;H^1(\Omega))}^2} +\|\Gamma_{I \times \omega} z-h\|_{L_2(I \times \omega)}^2+\eps^2 \|z\|_{L_2(I \times \Omega)}^2\Big\},
$$
where in Case~\eqref{hii} the regularizing term  $\eps^2 \|z\|_{L_2(I \times \Omega)}^2$ can be omitted.
By replacing the nominator $\|v\|_{L_2(I;H^1(\Omega))}^2$ by $((G_{Y}^\delta)^{-1} v)(v)$ for some uniform preconditioner $G_{Y}^\delta={G_{Y}^\delta}' \in \Lis({Y^\delta}',Y^\delta)$, the resulting system can be reduced to a symmetric positive definite system.
For $Y^\delta$ as in tensor product setting from Proposition~\ref{prop2} or in the time-slab setting from Remark~\ref{time-slab}, 
such preconditioners are easily constructed using multi-grid preconditioners in the spatial direction.
For the resulting $u_\eps^\delta$, the bound on $\nrm u-u^\delta_\eps\nrm_\eps$ from Theorem~\ref{thm:1} applies, and so do the bounds from Theorem~\ref{thm:0} with the specification of $j$ and $\eta$ corresponding to Example~\ref{ex2}.

\subsection{Data-assimilation for the heat equation as a first order system} \label{sec:first-order-heat}
We reconsider the data assimilation problem $Au=(B,\Gamma_{I \times \omega})u=(g,h)$ from Example~\ref{ex2}. 
Writing here $u$ as $u_1$, following \cite{75.257,75.28} for the well-posed forward heat problem  we rewrite this data-assimilation problem as a first order system
for ${\bf u}=(u_1,{\bf u}_2)$, where ${\bf u}_2=-\nabla_x u_1$.
To that end restricting $(f,g)$ to $L_2(I\times\Omega) \times L_2(I\times\omega) \subsetneq L_2(I;H^{-1}(\Omega)) \times L_2(I\times\omega)$, we consider the system
\be \label{FOS}
\widetilde{A} {\bf u}:=({\bf u}_2+\nabla_x u_1,\divv {\bf u}, \Gamma_{I \times \omega} u_1)=(0,f,g),
\ee
where $\divv {\bf u}:=\partial_t u_1+\divv_x {\bf u}_2$.
With
$$
\widetilde{X}:=\big\{{\bf u}=(u_1,{\bf u}_2) \in L_2(I;H^1(\Omega)) \times L_2(I \times \Omega)^d\colon \divv {\bf u} \in L_2(I \times \Omega)\big\}
$$
equipped with the graph norm, where for Case~\eqref{hii} the space $L_2(I;H^1(\Omega))$ should be read as $L_2(I;H_0^1(\Omega))$, it holds that
$$
\widetilde{A} \in \cL\big(\widetilde{X}, \underbrace{L_2(I \times \Omega)^d\times L_2(I \times \Omega)\times L_2(I \times \omega)}_{\widetilde{V}:=}\big).
$$
We are going to derive (un)conditional stability estimates for the first order system \eqref{FOS}.

For any ${\bf z}=(z_1,{\bf z}_2) \in \widetilde{X}$ and $v \in L_2(I;H_0^1(\Omega))$, integration-by-parts gives
$$
(B z_1)(v)=\int_I \int_\Omega \partial_t z_1 v+\nabla_x z_1\cdot \nabla_x v\,dx\,dt=
\int_I \int_\Omega \divv {\bf z} \,v +({\bf z}_2+\nabla_x z_1)\cdot \nabla_x v \,dx\,dt,
$$
so that, thanks to $\max\big(\|v\|_{L_2(I \times \Omega)},\|\nabla_x v\|_{L_2(I \times \Omega)^d}\big) \leq \|v\|_{L_2(I;H^1(\Omega))}$,
\begin{align} \nonumber
\|B z_1\|_{L_2(I;H^{-1}(\Omega))}&=
\sup_{0 \neq v \in L_2(I;H_0^1(\Omega))} \frac{\int_I \int_\Omega \partial_t z_1 v+\nabla_x z_1\cdot \nabla_x v\,dx\,dt}{\|v\|_{L_2(I;H^1(\Omega))}}\\ \label{critical}
&\leq \|\divv {\bf z}\|_{L_2(I \times \Omega)}+\|{\bf z}_2+\nabla_x z_1\|_{L_2(I \times \Omega)^d},
\end{align}
and so $\|A z_1\|_V=\sqrt{\|B z_1\|^2_{L_2(I;H^{-1}(\Omega))}+\|\Gamma_{I \times \omega} z_1\|_{L_2(I \times \omega)}^2}\, \lesssim \|\tilde{A} {\bf z}\|_{\tilde V}$.
From the conditional or unconditional stability estimates for the second order formulation from \eqref{hi} and \eqref{hii}, respectively, we infer that
\begin{align} \label{himod}
&\|z_1\|_{L_2((T_1,T_2);H^1(\breve{\omega}))} \lesssim 
\big(\|\widetilde{A} {\bf z}\|_{\widetilde V}+\|z_1\|_{L_2(I \times \Omega)}\big)^{1-\sigma} \|\widetilde{A} {\bf z}\|_{\widetilde V}^\sigma
\intertext{or} \label{hiimod}
&\|z_1\|_{L_2((T_1,T);H^1(\Omega)) \cap H^1((T_1,T);H^{-1}(\Omega)) } \lesssim \|\widetilde{A} {\bf z}\|_{\widetilde{V}},
\end{align}
in  Case~\eqref{hi} and \eqref{hii}, respectively.

To conclude (un)conditional stability for the first order system formulation in both cases, it remains to check injectivity of $(\widetilde{A},\widetilde{L})$ where $\widetilde{L} \in \cL(\widetilde{X},L_2(I\times \Omega))$ is given by $\widetilde{L} {\bf z}:= z_1$ in Case~\eqref{hi}, and $\widetilde{L}:=0$ in Case~\eqref{hii}.
In Case~\eqref{hi}, $(\widetilde{A},\widetilde{L}){\bf z}=0$ implies $z_1=0$ and thus ${\bf z}_2=-\nabla_x z_1=0$; and in Case~\eqref{hii}, 
$\widetilde{A} {\bf z}=0$ implies $A z_1=0$, which gives $z_1=0$, and so ${\bf z}_2=-\nabla_x z_1=0$.

Given a finite dimensional subspace $X^\delta \subset \widetilde{X}$, the regularized least squares approximation ${\bf u}_\eps^\delta \in X^\delta$ of the solution of \eqref{FOS}  is given by
$$
\hspace*{-1.7em}\argmin_{\mbox{} \hspace*{2em}{\bf z}=(z_1,{\bf z}_2) \in X^\delta}\hspace*{-2em}
\|{\bf z}_2\!+\!\nabla_x z_1\|_{L_2(I \times \Omega)^d}^2\!+\!\|\!\divv {\bf z}\!-\!g\|^2_{L_2(I \times \Omega)}\!+\!\| \Gamma_{I \times \omega} z_1\!-\!h\|_{L_2(I \times \omega)}^2\!+\!\eps^2\|z_1\|_{L_2(I \times \Omega)}^2,
$$
where in Case~\eqref{hii} the regularizing term  $\eps^2 \|z_1\|_{L_2(I \times \Omega)}^2$ can be omitted.
The bound on $\nrm {\bf u}-{\bf u}^\delta_\eps\nrm_\eps$ from Theorem~\ref{thm:1} applies, and so do the bounds from Theorem~\ref{thm:0} with the specification of $j$ and $\eta$ corresponding to \eqref{himod} or \eqref{hiimod} in Case~\eqref{hi} and Case~\eqref{hii}, respectively.

The main \emph{advantage} of this regularized first order system least squares (FOSLS) formulation is that all components of the residual are measured in $L_2$-type norms, so that there is no need to introduce one or more of these components as independent variables, and to ensure inf-sup stability by a careful selection of `trial' and `test' spaces. As a consequence \emph{any} finite dimensional subspace $X^\delta \subset \widetilde{X}$ can be applied.
A potential \emph{disadvantage} is that to arrive at this formulation, in \eqref{critical} the norm $\|\divv {\bf u}\|_{L_2(I;H^{-1}(\Omega))}$ was estimated on the stronger norm $\|\divv {\bf u}\|_{L_2(I\times\Omega)}$, which may result in reduced convergence rates for solutions that have singularities.
Experiments reported on in \cite{75.257} for the well-posed forward heat equation show that the risk of getting very low rates is not imaginary.

\subsection{Verification of \eqref{inf-sup} for the data-assimilation problem for the wave equation (Example~\ref{ex3})} \label{SverificationWave}
Let $\Omega \subset \R^d$ be a polytope (cf. Remark~\ref{rem:polytope}), and recall that $A=(\Box,\gamma_{I \times \partial\Omega},\Gamma_{I \times \omega}) \in \cL(X,H^{-1}(I \times \Omega)\times L_2(I \times \partial\Omega)\times L_2(I \times \omega))$.
We have to verify \eqref{inf-sup} for $B=\Box \in \cL(H^1(I \times \Omega),H^{-1}(I \times \Omega))$.

For $(\tria^\delta)_{\delta \in \Delta}$ being a family of conforming, uniformly shape regular partitions of $I \times \Omega$ into $(d+1)$-simplices,  we take
$$
X^\delta:=\cS_{\tria^\delta}^{0,1}.
$$

\begin{proposition}  \label{200} For each $\delta \in \Delta$, let $\tria_s^\delta$ be a refinement of $\tria^\delta$ such that\footref{firstfootnote} each $T \in \tria^\delta$ is subdivided into a uniformly bounded number of uniformly shape regular $(d+1)$-simplices, and $\tria_s^\delta$ has a vertex interior to each $e \in \cF(\tria^\delta)$ with $e \not\subset \partial (I \times \Omega)$.
Then for
 $$
 Y^\delta:=\cS_{\tria_s^\delta}^{0,1} \cap H^1_0(I \times \Omega),
 $$
 it holds that
 $$
 \inf_{\delta \in \Delta} \inf_{\{z \in X^\delta\colon \Box z\neq 0\}}\sup_{\{0 \neq v \in Y^\delta\}}
 \frac{|(\Box z)(v)|}{\|\Box z\|_{H^{-1}(I \times \Omega)}\|v\|_{H^1(I \times \Omega)}} >0.
 $$
 \end{proposition}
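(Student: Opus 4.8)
The plan is to follow the proof of Proposition~\ref{prop1} almost verbatim: although $\Box$ is indefinite, on continuous piecewise affine functions it behaves exactly as $-\triangle$ does, producing a functional supported on the mesh skeleton with cell-wise constant conormal-flux densities, and this is all the Fortin argument uses. By Theorem~\ref{thm:Fortin} (note $\Box X^\delta\neq\{0\}$ and $Y^\delta\neq\{0\}$) it suffices to construct a family $Q^\delta\in\cL\big(H^1_0(I\times\Omega),Y^\delta\big)$ with $(\Box X^\delta)\big((\identity-Q^\delta)H^1_0(I\times\Omega)\big)=0$ and $\sup_{\delta\in\Delta}\|Q^\delta\|_{\cL(H^1_0(I\times\Omega),H^1_0(I\times\Omega))}<\infty$; the inf-sup constant is then at least the reciprocal of this supremum.

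First I would record the skeleton identity for $B=\Box$. With $M:=\diag(-1,1,\dots,1)\in\R^{(d+1)\times(d+1)}$ and $\nabla_{t,x}$ the full space-time gradient, $(\Box z)(w)=\int_{I\times\Omega}(\nabla_{t,x}z)^\top M\,\nabla_{t,x}w\,dx\,dt$ by definition. For $z\in X^\delta=\cS^{0,1}_{\tria^\delta}$ the field $M\nabla_{t,x}z$ is constant on each $T\in\tria^\delta$, so integration by parts over $T$ (valid since $z|_T$ is smooth and $w\in H^1$) together with $\divv(M\nabla_{t,x}z|_T)=0$ yields $(\Box z)(w)=\sum_{T\in\tria^\delta}\int_{\partial T}(M\nabla_{t,x}z|_T\cdot n_T)\,w\,ds$. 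Regrouping by faces of $\tria^\delta$: on each interior face $e$ the combined conormal flux is a constant $\gamma_e\in\R$, while on every face $e\subset\partial(I\times\Omega)$ the term vanishes because $w\in H^1_0(I\times\Omega)$, hence
\be
(\Box z)(w)=\sum_{\{e\in\cF(\tria^\delta)\colon e\not\subset\partial(I\times\Omega)\}}\gamma_e\int_e w\,ds,\qquad z\in X^\delta,\ w\in H^1_0(I\times\Omega).
\ee
Thus it is enough that $Q^\delta$ maps into $H^1_0(I\times\Omega)$ and satisfies $\int_e(\identity-Q^\delta)v\,ds=0$ for every $e\in\cF(\tria^\delta)$ with $e\not\subset\partial(I\times\Omega)$.

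Such a $Q^\delta$ is constructed exactly as in Proposition~\ref{prop1}. Let $J^\delta$ be a Scott--Zhang quasi-interpolator onto $\cS^{0,1}_{\tria^\delta}\cap H^1_0(I\times\Omega)\subset Y^\delta$ that preserves the homogeneous boundary condition on $\partial(I\times\Omega)$, and put
\be
Q^\delta v:=J^\delta v+\sum_{\{e\in\cF(\tria^\delta)\colon e\not\subset\partial(I\times\Omega)\}}\frac{\int_e(\identity-J^\delta)v\,ds}{\int_e\phi_e\,ds}\,\phi_e\ \in Y^\delta,
\ee
where $\phi_e\in Y^\delta$ has values in $[0,1]$, equals $1$ at a vertex of $\tria_s^\delta$ interior to $e$, and vanishes outside $\cup\omega_e$ with $\omega_e:=\{T\in\tria^\delta\colon e\subset\partial T\}$; since a point interior to such an $e$ is interior to $I\times\Omega$, indeed $\phi_e\in H^1_0(I\times\Omega)$. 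Exactly as in Proposition~\ref{prop1} one gets $\int_e(\identity-Q^\delta)v\,ds=0$ on every interior face, hence by the displayed identity $(\Box z)\big((\identity-Q^\delta)v\big)=0$ for all $z\in X^\delta$. The uniform bound on $\|Q^\delta\|$ is the scaling and finite-overlap estimate from the proof of Proposition~\ref{prop1}, now in space-time dimension $d+1$: with $h_e:=\diam e$, the trace theorem gives $\|(\identity-J^\delta)v\|_{L_2(e)}\lesssim h_e^{1/2}|v|_{H^1(\omega_{T_e})}$, while $\int_e\phi_e\,ds\gtrsim h_e^{d}$, $\|\phi_e\|_{H^1(I\times\Omega)}\lesssim h_e^{(d-1)/2}$ and $|e|^{1/2}\lesssim h_e^{d/2}$, so that the powers of $h_e$ in each correction term cancel and summation over the finitely overlapping patches $\omega_{T_e}$ gives $\|Q^\delta v\|_{H^1(I\times\Omega)}\lesssim\|v\|_{H^1(I\times\Omega)}$, uniformly in $\delta$. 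Theorem~\ref{thm:Fortin} then delivers the asserted inf-sup bound.

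I do not expect a genuine obstacle here: the argument is essentially a transcription of Proposition~\ref{prop1}, and the indefiniteness of $\Box$ is immaterial to the Fortin mechanism, which only exploits that $\Box$ sends $X^\delta$ into skeleton-supported functionals with cell-wise constant flux densities --- a property shared with $-\triangle$. The two points deserving attention are (i) that the interior-vertex refinement is needed for \emph{every} interior face of $\tria^\delta$, with no low-dimensional exception (unlike Proposition~\ref{prop1}), since all faces of $(d+1)$-simplices have dimension $d\geq1$; and (ii) the bookkeeping of the powers of $h_e$ for the space-time dimension $d+1$ in the stability estimate.
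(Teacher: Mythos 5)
Your proposal is correct and follows exactly the route the paper intends: the paper's "proof" of Proposition~\ref{200} is just the remark that the argument of Proposition~\ref{prop1} carries over verbatim to the wave operator on $I\times\Omega$, and your write-up fills in precisely those details (element-wise divergence-free constant flux $M\nabla_{t,x}z$, skeleton identity, Scott--Zhang plus face-bubble Fortin operator, and the $h_e$-scaling in dimension $d+1$), with the exponents checking out.
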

 
 \noindent The proof of this proposition is similar to the proof of Proposition~\ref{prop1}. The fact that the current result concerns the wave operator on $I \times \Omega$ instead of the Laplacian on $\Omega$ does not make any difference.

Concluding, the least squares approximation $u^\delta_\eps \equiv u^\delta$ from \eqref{13} of the solution of the data-assimilation problem for the wave equation is given as the minimizer 
$$
u^\delta:=\argmin_{z \in X^\delta} \Big\{ \sup_{0 \neq v \in Y^\delta} \frac{|(\Box z-f)(v)|^2}{\|v\|_{H^1(I \times \Omega)}^2} +
\|\gamma_{I \times \partial \Omega} z-g\|_{L_2(I \times \partial\Omega)}^2+
\|\Gamma_{I \times \omega} z-h\|_{L_2(I \times \omega)}^2\Big\}.
$$
By replacing the denominator $\|v\|_{H^1(I \times \Omega)}^2$ by $((G_{Y}^\delta)^{-1} v)(v)$ for some uniform preconditioner $G_{Y}^\delta={G_{Y}^\delta}' \in \Lis({Y^\delta}',Y^\delta)$, the resulting system can be reduced to a symmetric positive definite system.
For the resulting $u^\delta$, the bound on $\nrm u-u^\delta \nrm_\eps$ from Theorem~\ref{thm:1} applies (where $\nrm \cdot \nrm_\eps$ is $\eps$-independent because $L=0$) , and so do the bounds from Theorem~\ref{thm:0} with the specification of $j$ and $\eta$ corresponding to Example~\ref{ex3}.

\section{Numerical Experiments} \label{sec:numer}
The package P1-FEM from \cite{p1fem} was adjusted to implement the problems in \textsc{Matlab}. The finite element library \verb+NGSolve+ from \cite{247.065} was used to implement the data assimilation problem for the heat equation in two dimensions.

We consider finite element spaces w.r.t.~uniformly shape regular partitions $\tria^\delta$ of $n$-dimensional bounded domains (e.g.~$\Omega$, $I \times \Omega$, or $\Gamma \subset \partial\Omega$)
into $n$-simplices, where we restrict ourselves to partitions that are quasi-uniform.
In view of the latter, we can speak of the mesh size $h_\delta$, which number raised to the power $-n$ is proportional to $\# \tria$ and thus to the dimension of the finite element space (of fixed order).

In this section the relative error with respect to some $j\colon X\to \mathbb{R}_+$ in a numerical approximation $u^\delta_\eps$ to the prescribed solution $u$ is defined as $\frac{j(u-u_\eps^\delta)}{j(u)}$.

\subsection{Cauchy problem for Poisson's equation}
For $\Omega=(0,\pi) \times (0,1)$, $\Sigma=(0,\pi) \times \{0\}$, and $\Sigma^c =\partial\Omega \setminus \overline{\Sigma}$, 
given  $f=(f_I,f_D,f_N) \in (H^1_{0,\Sigma^c}(\Omega))' \times H^{\frac12}(\Sigma) \times H^{-\frac12}(\Sigma)$ we consider 
the problem of finding $u \in H^1(\Omega)$ that solves
\be \label{eq:Cauchy-model}
-\triangle u = f_I \text{ on } \Omega,\quad u = f_D  \text{ on } \Sigma,\quad \tfrac{\partial u}{\partial n} = f_N \text{ on }\Sigma,
\ee
or, more precisely, its variational formulation $(B_1u,B_2 u)=(g_{f_I,f_N},f_D)$ from \eqref{Cauchy}.

We consider a sequence of uniform triangulations $(\tria^{\delta})_{\delta \in \Delta}$ of $\overline{\Omega}$, where 
each next  triangulation is created from its predecessor by one uniform newest vertex bisection starting from an initial  triangulation that consists of 12 triangles created from a subdivision of $\overline{\Omega}$ into 3 rectangles of size $\frac{\pi}{3} \times 1$ by cutting each of these rectangles along their diagonals.
The three interior vertices in this initial triangulation are labelled as the `newest vertices' of all 4 triangles that contain them.

Following Sect.~\ref{SverificationPoisson}, we take $X^\delta = \cS^{0,1}_{\tria^\delta}(\Omega)$, and with $\tria^\delta_s$ denoting the second successor of $\tria^\delta$ in the sequence of triangulations,
we set $Y_1^\delta= \cS^{0,1}_{\tria_s^\delta} \cap H^1_{0,\Sigma^{c}}(\Omega)$ and  $Y_2^\delta=\cS^{-1,0}_{\cE_s^\delta}$, where $\cE_s^\delta$ is the set of edges on $\bar{\Sigma}$ of $T \in \tria^\delta_s$.

Considering the conditional stability estimate from Case~\eqref{ii} in Example~\ref{ex1} (we did not test Case~\eqref{i}),
we recall that the idea behind our approach is to compute the minimizer $u_\eps^\delta$  over $X^\delta$ of the regularized least squares functional $z\mapsto \|B_1 z -g_{f_I,f_N}\|_{H^1_{0,\Sigma^c}(\Omega)'}^2+\|\gamma_\Sigma z-f_D\|_{\tilde{H}^{-\frac12}(\Sigma)'}^2+\eps^2\|z\|_{H^1(\Omega)}^2$.
To make this method feasible without compromizing its qualitative properties, we replace the suprema over $H^1_{0,\Sigma^c}(\Omega)$ and $\tilde{H}^{-\frac12}(\Sigma)$ in the dual norms in the first two terms by suprema over $Y_1^\delta$ and $Y_2^\delta$, respectively (see Proposition~\ref{prop1}-\ref{prop2}).
At the same time we replace the norms on $H^1(\Omega)$ and $\tilde{H}^{-\frac12}(\Sigma)$ in the denominators by 
$((G_{Y_1}^\delta)^{-1} \cdot)(\cdot)^{\frac12}$ and $((G_{Y_2}^\delta)^{-1} \cdot)(\cdot)^{\frac12}$
for
preconditioners $G_{Y_i}^\delta={G_{Y_i}^\delta}' \in \Lis({Y_i^\delta}',Y_i^\delta)$ with $\|G_{Y_1}^\delta f\|_{H^1(\Omega)}^2 \eqsim f(G_{Y_1}^\delta f)$ ($f \in {Y_1^\delta}'$) and 
$ \|G_{Y_2}^\delta f\|_{\tilde{H}^{-\frac12}(\Omega)}^2 \eqsim f(G_{Y_2}^\delta f)$ ($f \in {Y_2^\delta}'$). 
Then the resulting approximation $u_\eps^\delta$ can be computed as the unique solution in $X^\delta$ of the symmetric positive definite system
$$
(B_1  u_\eps^\delta-g_{f_I,f_N})(G_{Y_1}^\delta B_1  \tilde{z})+\int_\Sigma (\gamma_\Sigma u_\eps^\delta-f_D) G_{Y_2}^\delta  \gamma_\Sigma \tilde{z}\,ds
 + \eps^2\langle u_\eps^\delta,\tilde{z}\rangle_{H^1(\Omega)} =0 \,\,\,\,(\tilde z \in X^\delta).
$$
For $G_{Y_2}^\delta$ we take the (additive) multi-level preconditioner introduced in \cite{75.258}, and for
$G_{Y_1}^\delta$ we use a common (multiplicative) multi-level preconditioner.
Both preconditioners have linear computational complexity.

In all our experiments, we prescribe the solution
$$
u(x,y)=\sin x \sinh y+{\textstyle \frac19} x^2,
$$
which corresponds to (exact) data
\be \label{eq:exact-data}
{\textstyle f=(f_I,f_D,f_N)=(-\frac29,x \mapsto \frac19 x^2, x\mapsto -\sin x).}
\ee
We measure the errors of numerical solutions in the relative $L_2(\Omega)$-norm, i.e., the  $L_2(\Omega)$-norm divided by the $L_2(\Omega)$-norm of the exact solution.
\subsubsection{Unperturbed data}
For the case of unperturbed data, we consider two strategies for choosing the regularization parameter, viz., $\eps=h_\delta$, the latter being the mesh-size, and $\eps=0$, and compare the results with those obtained with the experimentally found $\eps$ that minimizes $\|u-u_\eps^\delta\|_{L_2(\Omega)}$.
Note that since $u$ is smooth, the choice $\eps=h_\delta$ satisfies the conditions in \eqref{15}.
\begin{figure}[h]
\centering
  \includegraphics[width=0.4\linewidth]{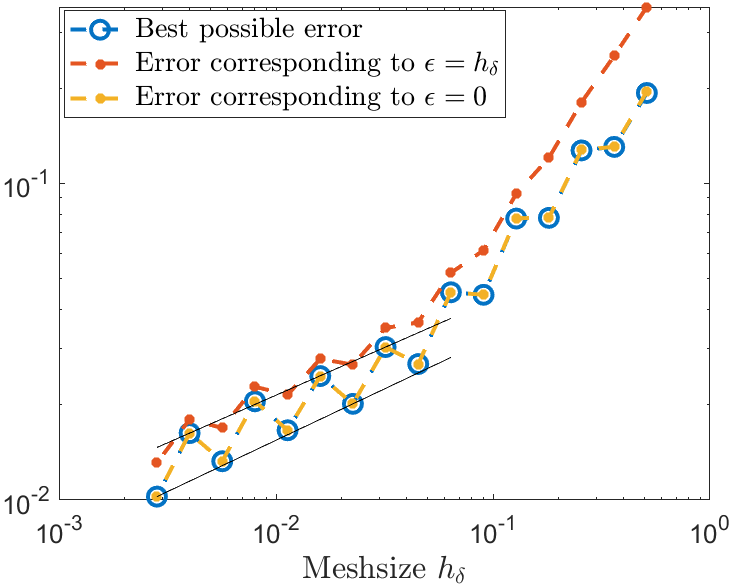}
  \caption{Poisson's equation. Meshsize vs.~relative $L_2(\Omega)$-error in case of unperturbed data.
  The asymptotic convergence rate in terms of $\#$ DoFs indicated by the solid straight lines is close to $0.15$.}
  \label{fig:unperturbed}
\end{figure}
The numerical results presented in Figure~\ref{fig:unperturbed} indicate, however, that regularization is not helpful, although it somewhat improves the conditioning of the system.

Likely the oscillations in the curves from Figure~\ref{fig:unperturbed} are due to the different geometry of the triangulations after an even or odd number of uniform refinements. The H\"{o}lder continuous behaviour, with exponent $0.15$, of the error as function of the residual, the latter being of order $h_\delta$, is better than the logarithmic dependence provided by the conditional stability estimate. That estimate, however, covers the case of a residual of most `nasty' type (and an infinitely fine mesh), whereas in our test, the residual is some specific function dependent on the partition and the prescribed solution.

\subsubsection{Randomly perturbed data}
We now perturb the Neumann datum $f_N$ with a random piecewise constant $g\in \cS^{-1,0}_{\cE_s^\delta}$ with $\|g\|_{H^{-\frac12}(\Sigma)} \eqsim \tau$.
We achieved this by normalizing a random function in $\cS^{-1,0}_{\cE_s^\delta}$, taking values in $[0,1]$,  in a discrete $H^{-\frac12}(\Sigma)$-norm, that is uniformly equivalent to the true $H^{-\frac12}(\Sigma)$-norm, and then multiplying the result with $\tau$. We used the discrete $H^{-\frac12}(\Sigma)$-norm constructed in \cite[p211]{75.258} using results from \cite{13.6}.

\begin{remark} An alternative for the latter is to construct a refinement $\cE_{ss}^\delta$ of $\cE_s^\delta$ such that for $g\in \cS^{-1,0}_{\cE_s^\delta}$, 
$\|g\|_{H^{-\frac12}(\Sigma)} \eqsim \sup_{0 \neq v \in \cS_{\cE_{ss}^\delta}^{0,1} \cap H^1_0(\Sigma)} \frac{\int_\Sigma g v\,ds}{\|v\|_{H_{00}^{\frac12}(\Sigma)}}$ (see \cite[\S3.1-2]{249.97}), after which an expression equivalent to the right-hand side can be computed using a standard multi-level preconditioner.
\end{remark}

We compare the results obtained with the regularization strategies $\eps=\tau+h_\delta$, which satisfies the conditions in \eqref{15}, and $\eps=\tau$, with those obtained with the experimentally found $\eps$ that minimizes $\|u-u_\eps^\delta\|_{L_2(\Omega)}$, which also here turns out to be $\eps=0$. The results are presented in Figure~\ref{fig:randomCauchy}.
\begin{figure}[h]
\begin{subfigure}{.5\textwidth}
\centering
\includegraphics[width=\linewidth]{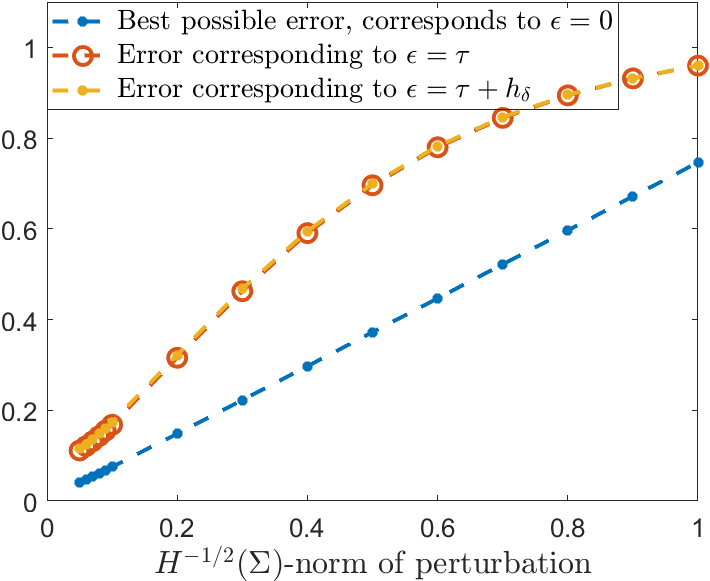}
\end{subfigure}%
\begin{subfigure}{.505\textwidth}
\centering
\includegraphics[width=\linewidth]{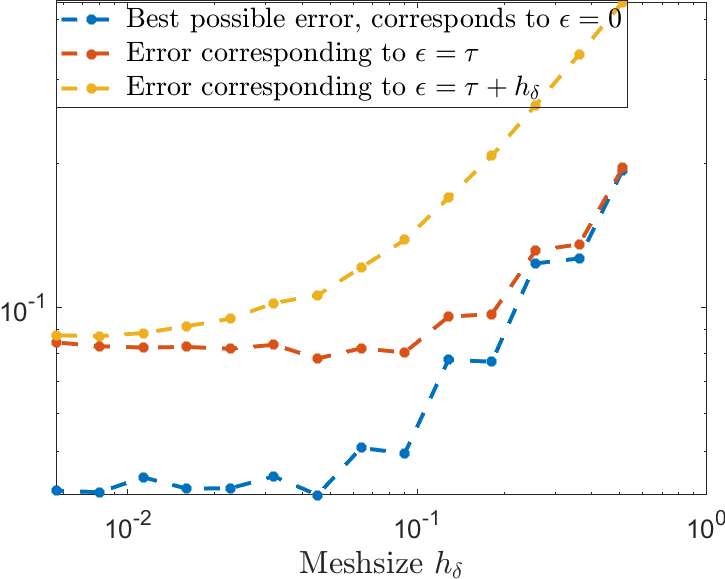}
\end{subfigure}
\caption{Poisson's equation. Norm of perturbation vs.~relative $L_2(\Omega)$-error in case of randomly perturbed Neumann data for different choices of $\eps$. Left: Random perturbations of varying $H^{-1/2}(\Sigma)$-norm and fixed mesh with $\#\mbox{DoFs} \approx 10^5$. Right: Random perturbations with $H^{-1/2}(\Sigma)$-norm equal to 0.1, and varying mesh-size.}
\label{fig:randomCauchy}
\end{figure}
We conclude that, for this problem, apparently such random perturbations are harmless, because without any regularization, for $h_\delta \downarrow 0$, which results in an increasingly ill-posed problem, their effect on the error hardly increases.

\subsubsection{`Difficult' perturbations}
From \cite{10.1} we know that for $m \in \N$, the solution $u=u^{(m)}$ of the Cauchy problem \eqref{eq:Cauchy-model} with data $f=f^{(m)}=(0,0,f_N^{(m)})$ where $f_N^{(m)}(x):= -\sqrt{\frac{2m}{\pi}}\sin m x$, is given by $u^{(m)}(x,y)=\sqrt{\frac{2}{m \pi}} \sin mx \sinh my$. It holds that $\|f_N^{(m)}\|_{H^{-\frac12}(\Sigma)}=1$, $\|u^{(m)}\|_{H^1(\Omega)} \eqsim |u^{(m)}|_{H^1(\Omega)}=\sqrt{\frac12 \sinh 2m} \sim \frac12 e^m$ ($m \rightarrow \infty$), and $\|u^{(m)}\|_{L_2(\Omega)} \sim \frac{e^m}{2\sqrt{2}m}$ ($m \rightarrow \infty$), illustrating the strong ill-posedness of the Cauchy problem. 

We investigate our numerical solver when we perturb the exact Neumann datum from \eqref{eq:exact-data} with $0.1 * f^{(m)}$. We compare the same regularization strategies as with random perturbations. The results given in Figure~\ref{fig:sinmx} show that both for $m=1, 3$ as well as for $m=16$ regularization at most slightly improves the results. For $m=1, 3$ this can be understood because the perturbation has an only modest effect on the solution. 
An explanation why for $m=16$ regularization is hardly helpful is that on the meshes that we employed apparently the best representation of 
 $u^{(16)}$ has a \emph{much} smaller norm than $u^{(16)}$ itself.
For the intermediate value $m=6$, however, we clearly see that regularization \emph{is} helpful.
\begin{figure}[h]
\begin{subfigure}{.5\textwidth}
 \centering
  \includegraphics[width=\linewidth]{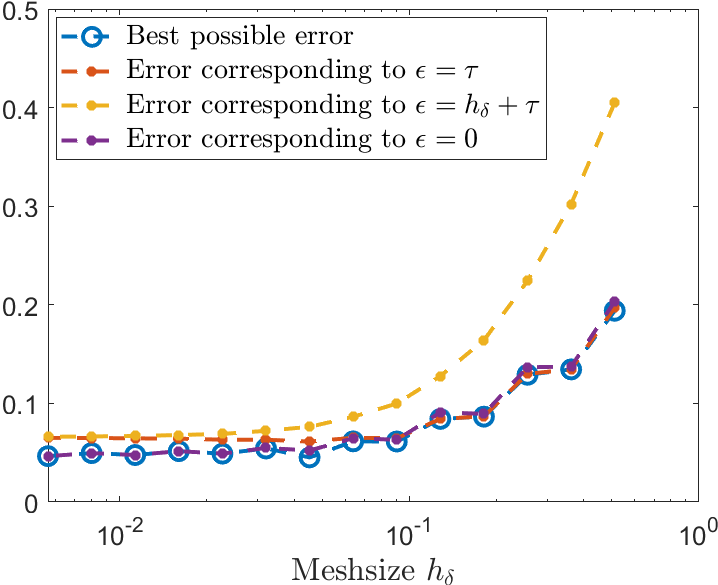}
  \caption{For $m=1$.}
  \label{fig:m1}
\end{subfigure}%
\begin{subfigure}{.5\textwidth}
 \centering
  \includegraphics[width=\linewidth]{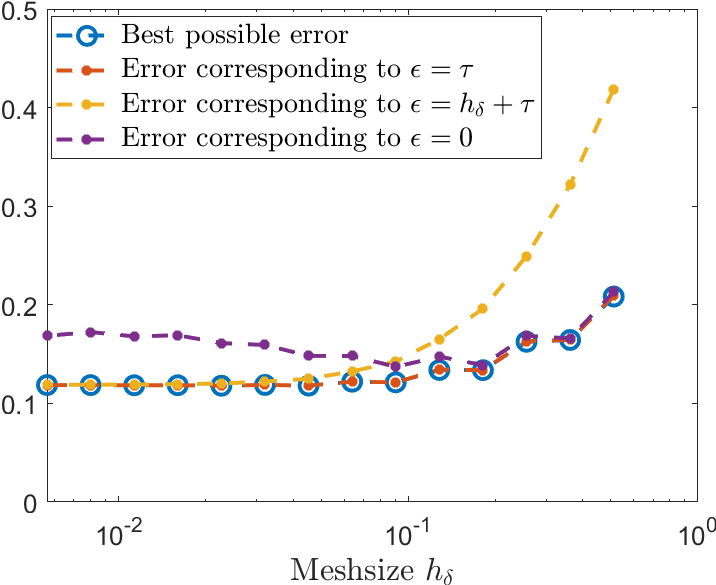}
  \caption{For $m=3$.}
  \label{fig:m3}
\end{subfigure}
\begin{subfigure}{.5\textwidth}
\centering
  \includegraphics[width=\linewidth]{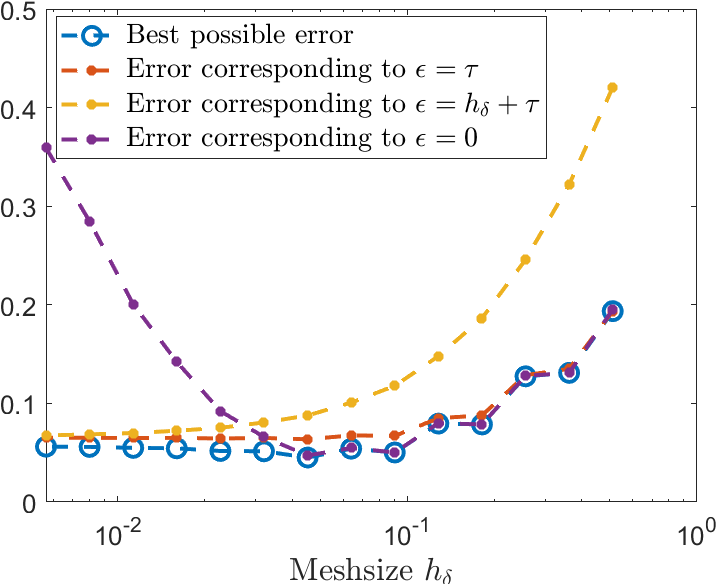}
  \caption{For $m=6$.}
  \label{fig:m6}
\end{subfigure}%
\begin{subfigure}{.5\textwidth}
\centering
  \includegraphics[width=\linewidth]{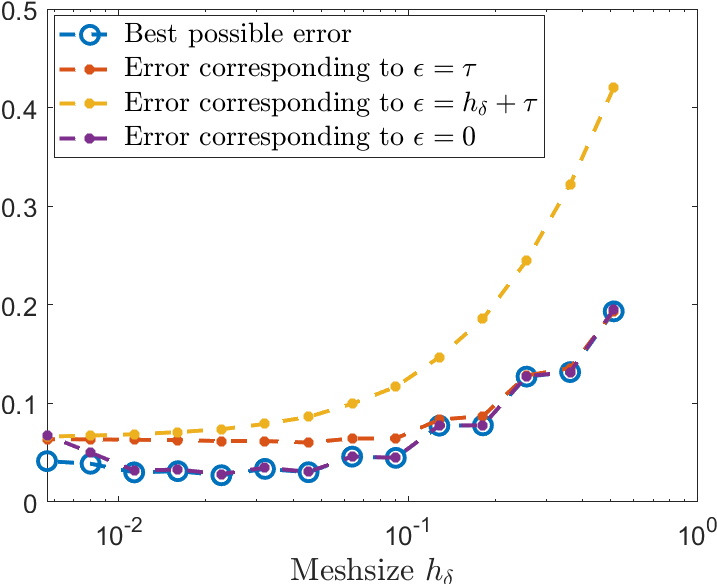}
  \caption{For $m=16$.}
  \label{fig:m16}
\end{subfigure}
\caption{Poisson's equation. Meshsize vs.~relative $L_2(\Omega)$-error for different choices of $\eps$ and perturbation with $0.1\sqrt{\frac{2m}{\pi}}\sin mx$ of the Neumann datum for $m=1,3,6,16$.}
\label{fig:sinmx}
\end{figure}


\subsection{Data-assimilation for the wave equation} \mbox{} 
For $\Omega=(0,1)$, $I=(0,1)$ and $\omega=(\tfrac12,\tfrac34)$, given $(f,g,h)\in H^{-1}(I\times\Omega)\times L_2(I\times\partial \Omega)\times L_2(I\times\omega)$, we consider the problem of finding $u$ that solves 
$$
\frac{\partial^2u}{\partial t^2} -\triangle_x u = f \text{ on } I\times\Omega,\quad u = g  \text{ on } I\times\partial\Omega,\quad u = h \text{ on }I\times\omega,
$$
or, more precisely its variational formulation $Au=(f,g,h)$ with $A:=(\Box, \gamma_{I\times \partial\Omega}, \Gamma_{I\times \omega})$ given in Example~\ref{ex3}.

We consider a sequence of uniform triangulations $(\tria^{\delta})_{\delta \in \Delta}$ of $\overline{I\times\Omega}$, where 
each next triangulation is created from its predecessor by one uniform newest vertex bisection starting from an initial  triangulation that is created by cutting $I\times\Omega$ along both diagonals. The interior vertex in this initial triangulation is labelled as the `newest vertex' of all four triangles.

Following Sect.~\ref{SverificationWave}, we take $X^\delta:=\cS_{\tria^\delta}^{0,1}(I\times\Omega)$, and with $\tria^\delta_s$ denoting the second successor of $\tria^\delta$ in the sequence of triangulations, we set $Y^\delta:= \cS^{0,1}_{\tria_s^\delta} \cap H^1_{0}(I\times\Omega)$.

Considering the \emph{un}conditional stability estimate ~\eqref{22} in Example~\ref{ex3},
our approach is to minimize the least squares functional $\|\Box z-f\|_{H^{-1}(I \times \Omega)}^2 +
\|\gamma_{I \times \partial \Omega} z-g\|_{L_2(I \times\partial\Omega)}^2+
\|\Gamma_{I \times \omega} z-h\|_{L_2(I \times \omega)}^2$ over $z \in X^\delta$, so without regularization term.
To make this method feasible without comprimizing its qualitative properties, first we replace the supremum from the first term by the supremum over $Y^\delta$ (see Proposition~\ref{200}). Second, to make the computation of the resulting dual norm efficient, we introduce a
preconditioner $G_{Y}^\delta \in \Lis({Y^\delta}',Y^\delta)$ with $\|G_{Y}^\delta f\|_{H^1(\Omega)}^2 \eqsim f(G_{Y}^\delta f)$ ($f \in {Y^\delta}'$), 
and compute our approximation $u^\delta$ as the unique solution in $X^\delta$ of the symmetric positive definite system
\begin{align*}
(\Box  u^\delta-f)(G_{Y}^\delta \Box \tilde{z})&+\langle\gamma_{I \times \partial \Omega}u^\delta-g,\gamma_{I \times \partial \Omega}\tilde{z}\rangle_{L_2(I\times\partial\Omega)} \\&+ \langle \Gamma_{I \times \omega}u^\delta-h,\Gamma_{I \times \omega}\tilde{z}\rangle_{L_2(I\times\omega)} =0 \,\,\,\,(\tilde z \in X^\delta).
\end{align*}
For $G_{Y}^\delta$ we use a common (multiplicative) multi-level preconditioner.

In our experiments, we prescribe the solution
$$
u(t,x)= \cos(\pi t)\sin(\pi x),
$$
which corresponds to (exact) data 
$$
(f,g,h)=\big(0,0,u|_{I \times \omega}\big).
$$
We perform experiments with unperturbed and perturbed data. Instead of the error in the hard to evaluate norm $\|\cdot\|_{L_\infty(I;L_2(\Omega))}+\|\partial_t\cdot\|_{L_2(I;H^{-1}(\Omega))}$ from the unconditional stability estimate \eqref{22}, we provide the a posteriori residual estimator from Section \ref{Sapost} given by
$$
\sqrt{(f-\Box  u^\delta)(G_Y^\delta(f-\Box  u_\eps^\delta)+\|\gamma_{I \times \partial \Omega} u_\eps^\delta-g\|_{L_2(I \times\partial\Omega)}^2+
\|\Gamma_{I \times \omega} u_\eps^\delta-h\|_{L_2(I \times \omega)}^2},
$$
which provides, modulo a constant factor, an upper bound for the aforementioned norm of the error
up to data oscillations. We additionally provide the relative errors in the easily evaluable $L_2(I \times \Omega)$- and $H^1(I \times \Omega)$-norms (i.e., these norms divided by the corresponding norm of the exact solution).

For the perturbed case we add to $h$ either a constant perturbation with $L_2(I \times \omega)$-norm equal to $\tau$, 
or a random perturbation of the form $p=\tau \frac{\hat p}{\|\hat p\|_{L_2(I \times \omega)}}$, where $\hat p$ is a random function in $X^\delta$ with values in $[0,1]$.
We take $\tau=0.01$. 
Figure \ref{fig:waveequation} shows the numerical results. 
\begin{figure}[h!]
\begin{subfigure}{.5\textwidth}
\centering
\includegraphics[width=\linewidth]{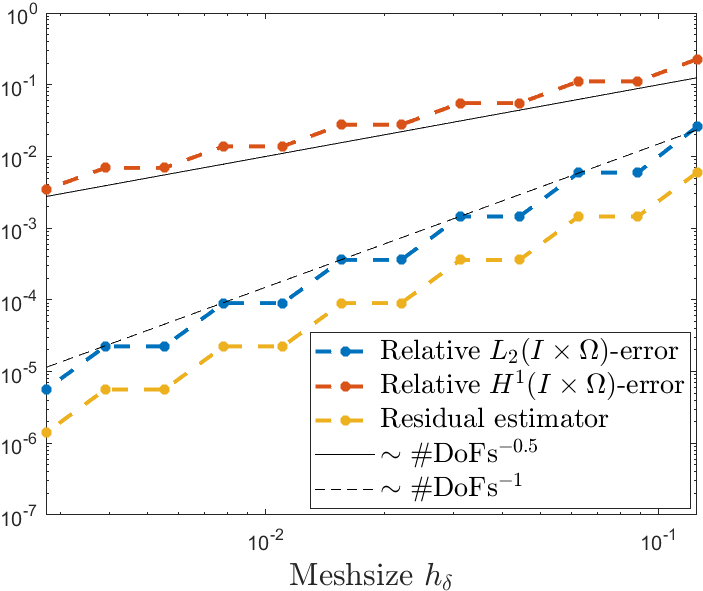}
  \caption{Unperturbed data.\\\mbox{}}
\end{subfigure}%
\begin{subfigure}{.5\textwidth}
\centering
\includegraphics[width=\linewidth]{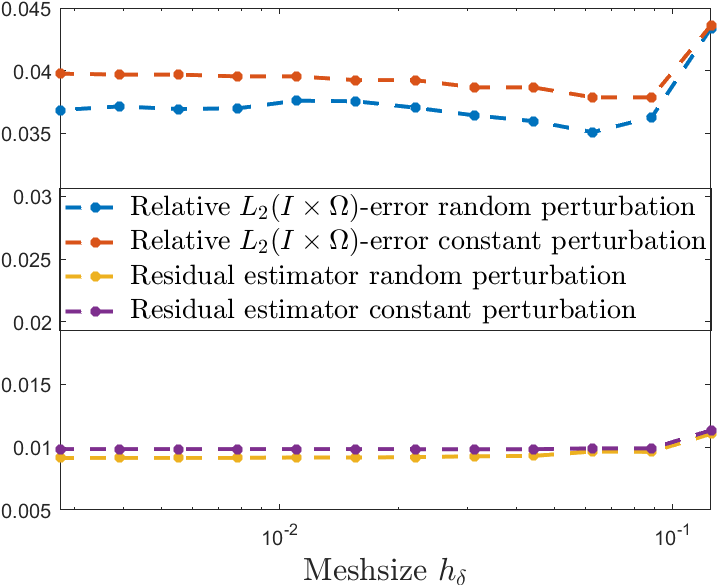}
  \caption{Perturbations, with $L_2(I \times \omega)$-norm equal to $0.01$, of datum $h$. }
\end{subfigure}
\caption{Data assimilation for the wave equation. Meshsize vs.~relative error (or residual estimator).}
\label{fig:waveequation}
\end{figure}
In the unperturbed cases, the rates for $L_2(I \times \Omega)$- and $H^1(I \times \Omega)$-norms are equal to the best approximation rates in these norms.

\subsection{Data-assimilation for the heat equation} \label{sec:numheat} \mbox{}
For $\Omega=(0,1)^d$, $I=(0,1)$ and $\omega=(\frac14,\frac34)^d$, given $(f,g)\in L_2(I;H^{-1}(\Omega)) \times L_2(I \times \omega)$,
we consider the problem of finding $u$ that solves the problem
$$
\partial_t u -\triangle_x u=f  \text{ on } I\times\Omega,\quad u|_{I \times \omega}=g,
$$
which was discussed in Example~\ref{ex2}. We consider the formulation of this problem as a first order system as analyzed in Sect.~\ref{sec:first-order-heat}. 
Assuming $f \in L_2(I \times \Omega)$, for ${\bf u}=(u_1,{\bf u}_2)=(u,-\nabla_x u)$, it reads as  $\widetilde{A} {\bf u}:=({\bf u}_2+\nabla_x u_1,\divv {\bf u}, \Gamma_{I \times \omega} u_1)=(0,f,g)$, where $\divv {\bf u}:=\partial_t u_1+\divv_x {\bf u}_2$ and ${\bf u}\in \widetilde{X}:=\big\{{\bf u}=(u_1,{\bf u}_2) \in L_2(I;H^1(\Omega)) \times L_2(I \times \Omega)^d\colon \divv {\bf u} \in L_2(I \times \Omega)\big\}$. 
Recall that we study this problem in two cases. Either we have no knowledge of $u$ on $I \times \partial\Omega$ (Case ~\eqref{hi}), or  $u$ is required to vanish on this lateral boundary (Case ~\eqref{hii}). The latter problem   is unconditionally stable.
 In this Case ~\eqref{hii}, the space $L_2(I;H^1(\Omega))$ should be read as $L_2(I;H_0^1(\Omega))$.

In view of the conditional or unconditional stability estimates \eqref{himod} or \eqref{hiimod},  respectively, given a finite dimensional subspace $X^\delta \subset \widetilde{X}$, with $\widetilde{V}:=L_2(I \times \Omega)^d \times L_2(I \times \Omega) \times L_2(I \times \omega)$ our approach is to minimize 
the least squares functional $\|\widetilde{A}{\bf u}-(0,f,g)\|_{\widetilde{V}}^2+\eps^2\|u_1\|_{L_2(I\times\Omega)}^2$ over ${\bf u} \in X^\delta$, where in Case~\eqref{hii}  the regularization term $\eps^2\|u_1\|_{L_2(I\times\Omega)}^2$ is omitted.

In our experiments, we prescribe the solution
$$
u(t,x)=(t^3+1)\prod_{i=1}^d \sin(\pi x_i),
$$
and define the data $(f,g)$ correspondingly.
For Case~\eqref{hi} the errors are measured in the relative $L_2((T_1,T_2);H^1(\breve{\omega}))$-norm, where we take $T_1=\frac18$, $T_2=\frac78$ and $\breve{\omega}=(\frac18,\frac78)^d$.
Instead of recording the error in the 
$L_2((T_1,T);H^1_0(\Omega)) \cap H^1((T_1,T);H^{-1}(\Omega))$-norm, which is
hard to evaluate, we make use of the unconditional stability estimate \eqref{hiimod} for Case~\eqref{hii}, and provide the residual $\|\widetilde{A}{\bf u}-(0,f,g)\|_{\widetilde{V}}$ which, modulo a constant factor, is an upper bound for the aforementioned norm of the error.
In addition we measure relative errors in the
$L_2((T_1,T);H^1(\Omega))$-norm for $T_1=\frac18$ which is easy to evaluate.

\subsubsection{Unperturbed data, and $\Omega=(0,1)$} \label{sec:heatunperturbed}
We consider a sequence of uniform triangulations $(\tria^{\delta})_{\delta \in \Delta}$ of $\overline{I\times\Omega}$, where 
each next triangulation is created from its predecessor by one uniform newest vertex bisection starting from an initial  triangulation that is created by cutting $I\times\Omega$ along both diagonals. The interior vertex in this initial triangulation is labelled as the `newest vertex' of all four triangles. We set $X^\delta:=\cS_{\tria^\delta}^{0,1}(I\times\Omega)\times \cS_{\tria^\delta}^{0,1}(I\times\Omega)^d$ in  Case~\eqref{hi}, and $X^\delta:=(\cS_{\tria^\delta}^{0,1}(I\times\Omega)\cap L_2(I;H_0^1(\Omega)))\times \cS_{\tria^\delta}^{0,1}(I\times\Omega)^d$ in Case~\eqref{hii}.

Taking unperturbed data, we consider two strategies for choosing the regularization parameter $\eps$ in Case~\eqref{hi}, namely $\eps=0$ and $\eps=h_\delta$, the latter being the mesh-size. Since $u$ is smooth, the choice $\eps=h_\delta$ satisfies the conditions in \eqref{15}. 
In Figure~\ref{fig:unperturbedheat}, we give the relative errors 
for both these choices of $\eps$, and also give the relative error and residual estimator in Case~\eqref{hii}. 
In the latter unconditionally stable case no regularization is applied.

As in the case of the Cauchy problem for Poisson's equation, the numerical results in Figure \ref{fig:unperturbedheat} indicate that with unperturbed data regularization is not helpful. 
The rates for $L_2((\frac18,\frac78);H^1((\frac18,\frac78)^d))$- or $L_2((\frac18,1);H^1(0,1))$-norms are equal to the best approximation rates in these norms.

\begin{figure}[h]
\centering
  \includegraphics[width=0.5\linewidth]{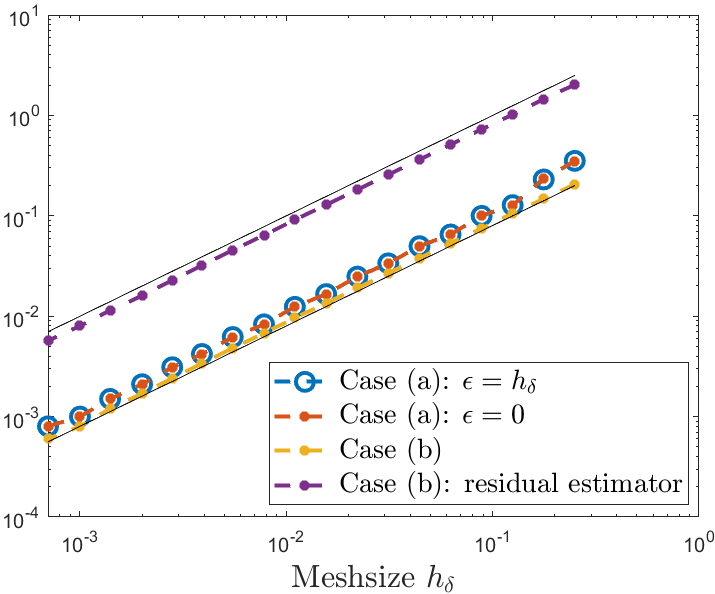}
  \caption{Data assimilation for the heat equation, and $\Omega=(0,1)$. Meshsize vs.~relative error (or residual estimator) in case of unperturbed data. Note that the norms in which the errors are measured are different for Case~\eqref{hi} and Case~\eqref{hii}. The asymptotic convergence rate in terms of $\#$ DoFs, indicated by the solid straight lines, is $0.5$.}
  \label{fig:unperturbedheat}
\end{figure}

\subsubsection{Randomly perturbed data, $\Omega=(0,1)$}
For $(X^\delta)_{\delta \in \Delta}$ as in Sect.~\ref{sec:heatunperturbed}, 
we now perturb the observational datum $g$ with a random piecewise constant $p\in \mathcal{S}^{-1,0}_{\mathcal{T}^\delta}$ with $||p||_{L_2(\omega)}=\tau$. This $p$ is constructed by normalizing a random function $\hat p\in\mathcal{S}^{-1,0}_{\mathcal{T}^\delta}$ and multiplying with $\tau$. We considered the cases where $\hat p$ takes values in either $[0,1]$ or $[-\frac12,\frac12]$.

For Case~\eqref{hi},
 we compare the results obtained with the regularization strategies $\eps=\tau$, and $\eps = \tau+h_\delta$, where the latter choice satisfies the conditions in \eqref{15}, with those obtained with the experimentally found $\eps$ that minimizes $||u-u_\eps^\delta||_{L_2(I;H^1(\Omega))}$. In addition, we present the results obtained for Case~\eqref{hii}. The results are shown in Figure \ref{fig:heatrandpert}.
\begin{figure}[h!]
\hspace*{0cm}
\begin{subfigure}{.5\textwidth}
\centering
\includegraphics[width=\linewidth]{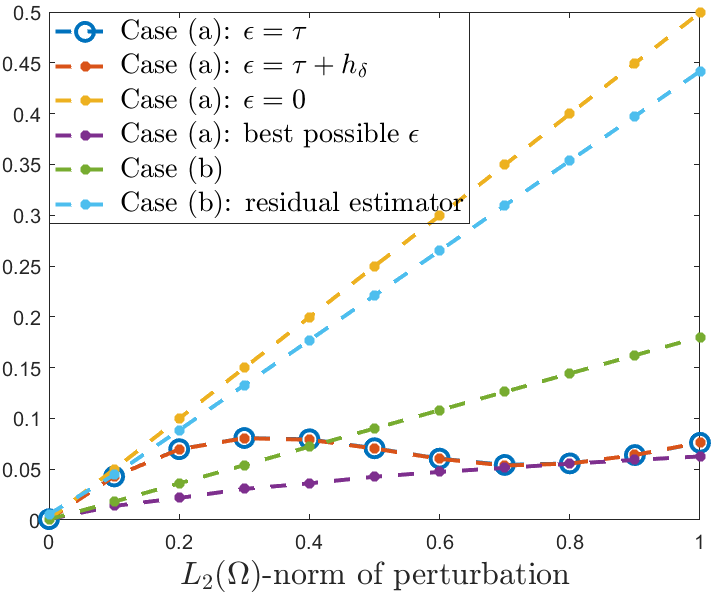}
  \label{fig:heatrandp}
\end{subfigure}\hspace*{0cm}%
\begin{subfigure}{.5\textwidth}
\centering
\includegraphics[width=\linewidth]{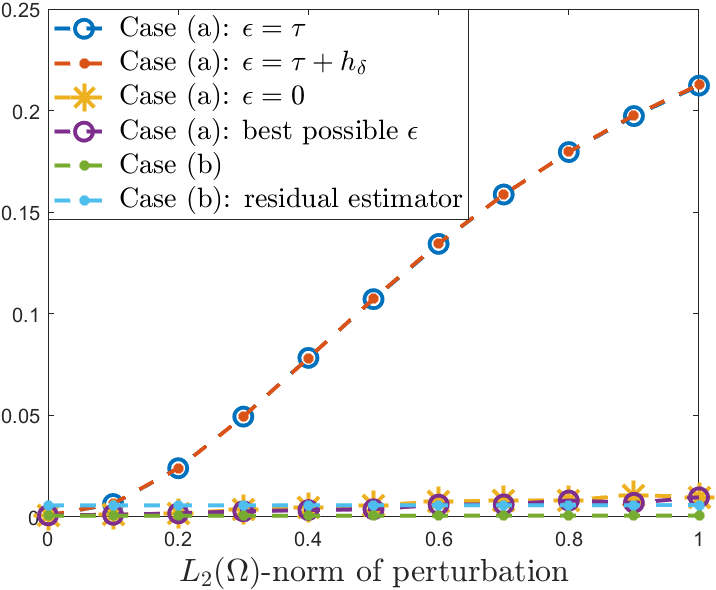}
  \label{fig:heatrandpm}
\end{subfigure}
\caption{Data assimilation for the heat equation, and $\Omega=(0,1)$. Norm of perturbation vs.~relative error (or residual estimator). Results for random perturbation of the observational datum $g$, for a fixed mesh with $\#$ DoFs $\approx10^6$ for different choices of $\eps$ and $\tau$. Left: the case where $\hat p$ takes values in $[0,1]$. Right: the case where $\hat p$ takes values in $[-1/2,1/2]$.} 
\label{fig:heatrandpert}
\end{figure}
For both Case~\eqref{hi} and \eqref{hii}, the solution is much more sensitive to random perturbations with mean $\tau/2$ than to those with mean $0$. In Case~\eqref{hi} regularization is helpful for perturbations with mean $\tau/2$, but it is not  when the mean is $0$.



\subsubsection{Unperturbed data, and $\Omega=(0,1)^2$}
We now consider the data-assimilation problem described in Sect.~\ref{sec:numheat} for unperturbed data and the two-dimensional spatial domain $\Omega=(0,1)^2$.
We consider a sequence of conforming partitions $(\tria^{\delta})_{\delta \in \Delta}$ of $\overline{I\times\Omega}$ into tetrahedra, where each partition consists of $h_\delta^{-3}$ cubes with sidelength $h_\delta$ that are decomposed into 6 tetrahedra using the Kuhn splitting.
Since with our mesh-sizes and linear finite elements we could not clearly observe convergence in Case~\eqref{hi}, we take quadratic elements, i.e., 
we set $X^\delta:=\cS_{\tria^\delta}^{0,2}(I\times\Omega)\times \cS_{\tria^\delta}^{0,2}(I\times\Omega)^2$ in  Case~\eqref{hi}, and $X^\delta:=(\cS_{\tria^\delta}^{0,2}(I\times\Omega)\cap L_2(I;H_0^1(\Omega)))\times \cS_{\tria^\delta}^{0,2}(I\times\Omega)^2$ in Case~\eqref{hii}.

We consider regularization parameters  $\eps=0$ and $\eps=h_\delta^2$ for Case~\eqref{hi}, where the latter satisfies the conditions in \eqref{15},
and for Case~\eqref{hii} apply no regularization. The results are given in Figure~\ref{fig:unperturbedheat2D}.

\begin{figure}[h]
\centering
  \includegraphics[width=0.6\linewidth]{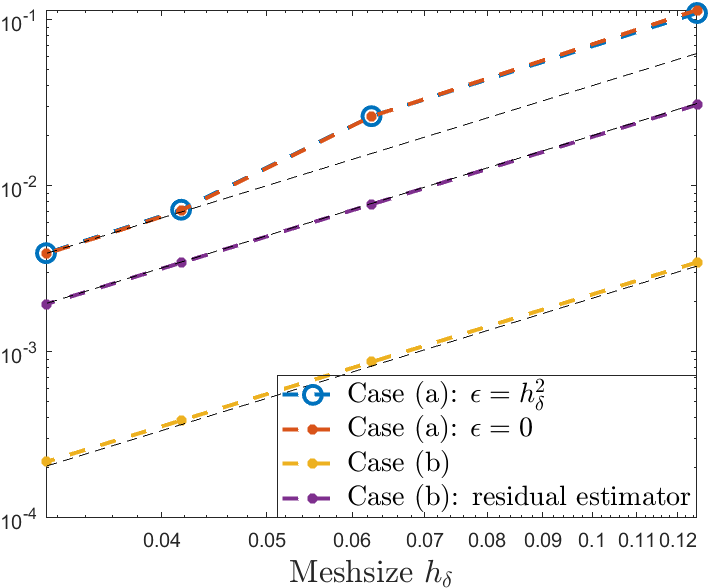}
  \caption{Data assimilation for the heat equation, and $\Omega=(0,1)^2$. Meshsize vs.~relative error (or residual estimator) in case of unperturbed data.
  The asymptotic convergence rate in terms of $\#$ DoFs indicated by the dashed straight lines is $1$.}
 
  \label{fig:unperturbedheat2D}
\end{figure}

\section{Conclusion}
We have constructed a least squares solver for general conditionally stable ill-posed PDEs.
For this solver it was demonstrated that, for a suitable regularization parameter, the error in the numerical approximation is qualitatively the best that can be expected in view of the conditional stability estimate.
In applications the least squares functional to be minimized involves negative and/or fractional Sobolev norms of residuals.
It was shown that these norms can be replaced by computable quantities without compromizing any of the attractive theoretical properties of the method.

The theoretical results were illustrated by numerical experiments for Poisson's equation with  Cauchy data, and data-assimilation problems for both heat and wave-equation. In several examples the bounds on the error in the numerical approximation that were derived using the conditional stability estimates were pessimistic, which is not surprising since these estimates cover worst case settings. Similarly, it turns out that in many cases better results were obtained by applying a smaller regularization parameter than predicted by the theoretical estimates.


\begin{thebibliography}{BFMO21b}

\bibitem[AL09]{13.6}
M.~Arioli and D.~Loghin.
\newblock Discrete interpolation norms with applications.
\newblock {\em SIAM J. Numer. Anal.}, 47(4):2924--2951, 2009.
\newblock doi:10.1137/080729360.

\bibitem[ARRV09]{10.1}
G.~Alessandrini, L.~Rondi, E.~Rosset, and S.~Vessella.
\newblock The stability for the {C}auchy problem for elliptic equations.
\newblock {\em Inverse Problems}, 25(12):123004, 47, 2009.
\newblock doi:10.1088/0266-5611/25/12/123004.

\bibitem[Bar70]{19.3}
C.~Bardos.
\newblock Probl\`emes aux limites pour les \'equations aux d\'eriv\'ees
  partielles du premier ordre \`a coefficients r\'eels; th\'eor\`emes
  d'approximation; application \`a l'\'equation de transport.
\newblock {\em Ann. Sci. \'Ecole Norm. Sup. (4)}, 3:185--233, 1970.

\bibitem[BBFD15]{19.897}
El. B\'{e}cache, L.~Bourgeois, L.~Franceschini, and J.~Dard\'{e}.
\newblock Application of mixed formulations of quasi-reversibility to solve
  ill-posed problems for heat and wave equations: the 1{D} case.
\newblock {\em Inverse Probl. Imaging}, 9(4):971--1002, 2015.
\newblock doi:10.3934/ipi.2015.9.971.

\bibitem[BC20]{35.828}
L.~Bourgeois and L.~Chesnel.
\newblock On quasi-reversibility solutions to the {C}auchy problem for the
  {L}aplace equation: regularity and error estimates.
\newblock {\em ESAIM Math. Model. Numer. Anal.}, 54(2):493--529, 2020.
\newblock doi:10.1051/m2an/2019073.

\bibitem[BDE22]{35.9297}
E.~Burman, G.~Delay, and A.~Ern.
\newblock The unique continuation problem for the heat equation discretized
  with a high-order space-time nonconforming method.
\newblock hal 03720960, 2022.

\bibitem[BFMO21a]{35.8595}
E.~Burman, A.~Feizmohammadi, A.~M\"{u}nch, and L.~Oksanen.
\newblock Space time stabilized finite element methods for a unique
  continuation problem subject to the wave equation.
\newblock {\em ESAIM Math. Model. Numer. Anal.}, 55(suppl.):S969--S991, 2021.
\newblock doi:10.1051/m2an/2020062.

\bibitem[BFMO21b]{35.9296}
E.~Burman, A.~Feizmohammadi, A.~M\"{u}nch, and L.~Oksanen.
\newblock Spacetime finite element methods for control problems subject to the
  wave equation, 2021.
\newblock arXiv:2109.07890.

\bibitem[BFO20]{35.859}
E.~Burman, A.~Feizmohammadi, and L.~Oksanen.
\newblock A finite element data assimilation method for the wave equation.
\newblock {\em Math. Comp.}, 89(324):1681--1709, 2020.
\newblock doi:10.1090/mcom/3508.

\bibitem[BG09]{23.5}
P.~B. Bochev and M.~D. Gunzburger.
\newblock {\em Least-squares finite element methods}, volume 166 of {\em
  Applied Mathematical Sciences}.
\newblock Springer, New York, 2009.
\newblock doi:10.1007/b13382.

\bibitem[BHL18]{35.928}
E.~Burman, P.~Hansbo, and M.G. Larson.
\newblock Solving ill-posed control problems by stabilized finite element
  methods: an alternative to {T}ikhonov regularization.
\newblock {\em Inverse Problems}, 34(3):035004, 36, 2018.
\newblock doi:10.1088/1361-6420/aaa32b.

\bibitem[BIHO18]{35.926}
E.~Burman, J.~Ish-Horowicz, and L.~Oksanen.
\newblock Fully discrete finite element data assimilation method for the heat
  equation.
\newblock {\em ESAIM Math. Model. Numer. Anal.}, 52(5):2065--2082, 2018.
\newblock doi:10.1051/m2an/2018030.

\bibitem[BLR92]{19.4}
C.~Bardos, G.~Lebeau, and J.~Rauch.
\newblock Sharp sufficient conditions for the observation, control, and
  stabilization of waves from the boundary.
\newblock {\em SIAM J. Control Optim.}, 30(5):1024--1065, 1992.
\newblock doi:10.1137/0330055.

\bibitem[BO18]{35.925}
E.~Burman and L.~Oksanen.
\newblock Data assimilation for the heat equation using stabilized finite
  element methods.
\newblock {\em Numer. Math.}, 139(3):505--528, 2018.
\newblock doi:10.1007/s00211-018-0949-3.

\bibitem[BPX90]{34.3}
J.H. Bramble, J.E. Pasciak, and J.~Xu.
\newblock Parallel multilevel preconditioners.
\newblock {\em Math. Comp.}, 55:1--22, 1990.

\bibitem[BR85]{20.05}
C.~Bernardi and G.~Raugel.
\newblock Analysis of some finite elements for the {S}tokes problem.
\newblock {\em Math. Comp.}, 44(169):71--79, 1985.
\newblock doi:10.2307/2007793.

\bibitem[BR18]{28.5}
L.~Bourgeois and A.~Recoquillay.
\newblock A mixed formulation of the {T}ikhonov regularization and its
  application to inverse {PDE} problems.
\newblock {\em ESAIM Math. Model. Numer. Anal.}, 52(1):123--145, 2018.
\newblock doi:10.1051/m2an/2018008.

\bibitem[Bur98]{35.8598}
N.~Burq.
\newblock Contr\^{o}le de l'\'{e}quation des ondes dans des ouverts comportant
  des coins.
\newblock {\em Bull. Soc. Math. France}, 126(4):601--637, 1998.
\newblock Appendix B written in collaboration with Jean-Marc Schlenker.
\newblock URL: \url{http://www.numdam.org/item?id=BSMF_1998__126_4_601_0}.

\bibitem[Bur16]{35.8585}
E.~Burman.
\newblock Stabilised finite element methods for ill-posed problems with
  conditional stability.
\newblock In {\em Building bridges: connections and challenges in modern
  approaches to numerical partial differential equations}, volume 114 of {\em
  Lect. Notes Comput. Sci. Eng.}, pages 93--127. Springer, [Cham], 2016.

\bibitem[Bur17]{35.929}
E.~Burman.
\newblock The elliptic {C}auchy problem revisited: control of boundary data in
  natural norms.
\newblock {\em C. R. Math. Acad. Sci. Paris}, 355(4):479--484, 2017.
\newblock doi:10.1016/j.crma.2017.02.014.

\bibitem[DSW22]{58.6}
W.~Dahmen, R.~Stevenson, and J.~Westerdiep.
\newblock Accuracy controlled data assimilation for parabolic problems.
\newblock {\em Math. Comp.}, 91(334):557--595, 2022.
\newblock doi:10.1090/mcom/3680.

\bibitem[F{\uumlaut}h21]{75.258}
Th. F{\uumlaut}hrer.
\newblock Multilevel decompositions and norms for negative order {S}obolev
  spaces.
\newblock {\em Math. Comp.}, 91(333):183--218, 2021.
\newblock doi:10.1090/mcom/3674.

\bibitem[FK21]{75.257}
Th. F{\uumlaut}hrer and M.~Karkulik.
\newblock Space-time least-squares finite elements for parabolic equations.
\newblock {\em Comput. Math. Appl.}, 92:27--36, 2021.
\newblock doi:10.1016/j.camwa.2021.03.004.

\bibitem[FPW11]{p1fem}
S.~Funken, D.~Praetorius, and P.~Wissgott.
\newblock Efficient implementation of adaptive {P}1-{FEM} in {M}atlab.
\newblock {\em Comput. Methods Appl. Math.}, 11(4):460--490, 2011.
\newblock doi:10.2478/cmam-2011-0026.

\bibitem[GS21]{75.28}
G.~Gantner and R.P. Stevenson.
\newblock Further results on a space-time {FOSLS} formulation of parabolic
  {PDE}s.
\newblock {\em ESAIM Math. Model. Numer. Anal.}, 55(1):283--299, 2021.
\newblock doi:10.1051/m2an/2020084.

\bibitem[Isa06]{168.825}
V.~Isakov.
\newblock {\em Inverse problems for partial differential equations}, volume 127
  of {\em Applied Mathematical Sciences}.
\newblock Springer, New York, second edition, 2006.

\bibitem[IY14]{145}
O.~Imanuvilov and M.~Yamamoto.
\newblock Conditional stability in a backward parabolic system.
\newblock {\em Appl. Anal.}, 93(10):2174--2198, 2014.
\newblock doi:10.1080/00036811.2013.873412.

\bibitem[Kli06]{168.865}
M.V. Klibanov.
\newblock Estimates of initial conditions of parabolic equations and
  inequalities via lateral {C}auchy data.
\newblock {\em Inverse Problems}, 22(2):495--514, 2006.
\newblock doi:10.1088/0266-5611/22/2/007.

\bibitem[LLT86]{169.066}
I.~Lasiecka, J.-L. Lions, and R.~Triggiani.
\newblock Nonhomogeneous boundary value problems for second order hyperbolic
  operators.
\newblock {\em J. Math. Pures Appl. (9)}, 65(2):149--192, 1986.

\bibitem[LM72]{185}
J.-L. Lions and E.~Magenes.
\newblock {\em Non-homogeneous boundary value problems and applications. {V}ol.
  {I}}.
\newblock Springer-Verlag, New York-Heidelberg, 1972.
\newblock Translated from the French by P. Kenneth, Die Grundlehren der
  mathematischen Wissenschaften, Band 181.

\bibitem[LRLTT17]{169.0555}
J.~Le~Rousseau, G.~Lebeau, P.~Terpolilli, and E.~Tr\'{e}lat.
\newblock Geometric control condition for the wave equation with a
  time-dependent observation domain.
\newblock {\em Anal. PDE}, 10(4):983--1015, 2017.
\newblock doi:10.2140/apde.2017.10.983.

\bibitem[PP13]{242.8155}
M.~Page and D.~Praetorius.
\newblock Convergence of adaptive {FEM} for some elliptic obstacle problem.
\newblock {\em Appl. Anal.}, 92(3):595--615, 2013.
\newblock doi:10.1080/00036811.2011.631916.

\bibitem[Sch14]{247.065}
J.~Sch{\oumlaut}berl.
\newblock C++11 implementation of finite elements in ngsolve.
\newblock Technical report, Institute for Analysis and Scientific Computing.
  Vienna University of Technology, 2014.

\bibitem[SvV20a]{249.97}
R.P. Stevenson and R.~van Veneti\"{e}.
\newblock Uniform preconditioners for problems of negative order.
\newblock {\em Math. Comp.}, 89(322):645--674, 2020.
\newblock doi:10.1090/mcom/3481.

\bibitem[SvV20b]{249.975}
R.P. Stevenson and R.~van Veneti\"{e}.
\newblock Uniform preconditioners for problems of positive order.
\newblock {\em Comput. Math. Appl.}, 79(12):3516--3530, 2020.
\newblock doi:10.1016/j.camwa.2020.02.009.

\bibitem[SvV21]{249.985}
R.P. Stevenson and R.~van Veneti\"{e}.
\newblock Uniform {P}reconditioners of {L}inear {C}omplexity for {P}roblems of
  {N}egative {O}rder.
\newblock {\em Comput. Methods Appl. Math.}, 21(2):469--478, 2021.
\newblock doi:10.1515/cmam-2020-0052.

\bibitem[SW21a]{249.992}
R.P. Stevenson and J.~Westerdiep.
\newblock Minimal residual space-time discretizations of parabolic equations:
  asymmetric spatial operators.
\newblock {\em Comput. Math. Appl.}, 101:107--118, 2021.
\newblock doi:10.1016/j.camwa.2021.09.014.

\bibitem[SW21b]{249.99}
R.P. Stevenson and J.~Westerdiep.
\newblock Stability of {G}alerkin discretizations of a mixed space-time
  variational formulation of parabolic evolution equations.
\newblock {\em IMA J. Numer. Anal.}, 41(1):28--47, 2021.
\newblock doi:10.1093/imanum/drz069.

\bibitem[SZ90]{247.2}
L.~R. Scott and S.~Zhang.
\newblock Finite element interpolation of nonsmooth functions satisfying
  boundary conditions.
\newblock {\em Math. Comp.}, 54(190):483--493, 1990.

\bibitem[Tik43]{250.2}
A.~N. Tikhonov.
\newblock On the stability of inverse problems.
\newblock {\em C. R. (Doklady) Acad. Sci. URSS (N.S.)}, 39:176--179, 1943.

\end{thebibliography}

\end{document}